\newcommand{\Z}{\mathbb{Z}}
\newcommand{\C}{\mathbb{C}}
\newcommand{\K}{\mathbbm{k}}
\newcommand{\vect}[1]{#1}
\newcommand{\TGWdat}{(R,\sigma,\vect{t},\mu)}
\newcommand{\nn}{\underline{n}}
\newcommand{\si}{\sigma}\newcommand{\al}{\alpha}
\newcommand{\la}{\lambda}\newcommand{\ga}{\gamma}
\newcommand{\ep}{\varepsilon}
\DeclareMathOperator{\grRad}{grRad}
\DeclareMathOperator{\diag}{diag}
\DeclareMathOperator{\identity}{id}
\DeclareMathOperator{\Aut}{Aut}
\DeclareMathOperator{\Rad}{Rad}
\DeclareMathOperator{\reg}{reg}
\DeclareMathOperator{\Supp}{Supp}
\DeclareMathOperator{\ad}{ad}
\DeclareMathOperator{\Spec}{Spec}
\renewcommand{\emptyset}{\varnothing}
\theoremstyle{plain}
\newtheorem{theorem}{Theorem}[section]
\newtheorem{lemma}[theorem]{Lemma}
\newtheorem{prop}[theorem]{Proposition}
\newtheorem{corollary}[theorem]{Corollary}
\theoremstyle{definition}
\newtheorem{definition}[theorem]{Definition}
\newtheorem{exmp}[theorem]{Example}
\newtheorem{prob}{Problem}
\newtheorem{remark}[theorem]{Remark}
\newtheorem*{thmA}{Theorem A}
\newtheorem*{thmB}{Theorem B}
\newtheorem*{thmC}{Theorem C}
\begin{document}

\author{Jonas T. Hartwig\footnote{E-mail: jonas.hartwig@gmail.com} \thanks{Partially supported by the Netherlands Organization for Scientific Research in the VIDI-project ``Symmetry and modularity in exactly solvable models''
 and by postdoctoral grants from the Swedish Research Council and from the S\~{a}o Paulo Research Foundation FAPESP (2008/10688-1).} 
\and Johan \"Oinert\footnote{Address: Department of Mathematical Sciences, University of Copenhagen, Universitetsparken 5, DK-2100 Copenhagen \O, Denmark, E-mail: oinert@math.ku.dk} \thanks{Partially supported by The Swedish Research Council, The
Swedish Foundation for International Cooperation in Research and Higher Education
(STINT), The Crafoord Foundation, The Royal Physiographic Society in Lund, The
Swedish Royal Academy of Sciences and "LieGrits", a Marie Curie Research Training
Network funded by the European Community as project MRTN-CT 2003-505078.}}

\title{Simplicity and maximal commutative subalgebras of twisted generalized Weyl algebras}
\date{}

\maketitle

\begin{abstract}
In this paper we show that each non-zero ideal of a twisted generalized Weyl algebra (TGWA) $A$
intersects the centralizer of the distinguished subalgebra $R$ in $A$ non-trivially.
We also provide a necessary and sufficient condition for the centralizer of $R$ in $A$ to be commutative, and give examples of TGWAs associated to symmetric Cartan matrices
satisfying this condition.
By imposing a certain finiteness condition on $R$ (weaker than Noetherianity) we are able to make an Ore localization
which turns out to be useful when investigating simplicity of the TGWA.
Under this mild assumption we obtain necessary and sufficient conditions for the simplicity of TGWAs. We describe how this is
related to maximal commutativity of $R$ in $A$ and the (non-) existence of
non-trivial $\Z^n$-invariant ideals of $R$. Our result is a generalization of the rank one case, obtained by D. A. Jordan in 1993.
We illustrate our theorems by considering some special classes of TGWAs and providing concrete examples.
\end{abstract}

\tableofcontents

\pagestyle{headings}

\section{Introduction}

Higher rank generalized Weyl algebras (GWAs) were introduced by Bavula in \cite{Bavula} and are defined as follows.
Let $R$ be a ring and $\sigma : \Z^n\to\Aut(R)$, $g\mapsto\sigma_g$, an action of $\Z^n$ on $R$ by ring automorphisms, $\vect{t} = (t_1, \ldots , t_n)$ an $n$-tuple of non-zero elements of the center of $R$ such that $\sigma_i(t_j)=t_j$ for all $i\neq j$. The \emph{generalized Weyl algebra of rank} (or \emph{degree}) $n\in \Z_{>0}$, denoted
$R(\sigma,\vect{t})$, is the ring extension of $R$ by $X_1,\ldots,X_n,Y_1,\ldots,Y_n$ subject to the relations
\begin{subequations}
\begin{align}
X_ir&=\si_i(r)X_i,&Y_ir&=\si_i^{-1}(r)Y_i, &\forall r\in R, i\in \{1,\ldots,n\},\\
Y_iX_i&=t_i, &X_iY_i&=\si_i(t_i), &\forall i\in \{1,\ldots,n\},\\
Y_iY_j&=Y_jY_i, & X_iX_j&=X_jX_i  & \forall i,j \in \{1,\ldots,n\}, \label{eq:GWAcommutation}\\
&& X_iY_j&=Y_jX_i, & \forall i\neq j,
\end{align}
\end{subequations}
where $\si_i:=\si_{e_i}$ and $e_i=(0,\ldots,\overset{i}{1},\ldots,0)\in\Z^n$.
In \cite{MT_def} Mazorchuk and Turowska introduced a class of algebras called \emph{twisted generalized Weyl algebras} (TGWAs), which is a generalization of the higher rank generalized Weyl algebras (see Section \ref{TGWCTGWAdef} for the precise definition). Basically relation \eqref{eq:GWAcommutation} is relaxed in order to also accomodate situations where the defining relations among the $X_i$'s (and among the $Y_i$'s) are some $q$-commutation relations (say, $X_iX_j=qX_jX_i$, $i<j$) or some Serre-type relations (such as $X_i^2X_j-2X_iX_jX_i+X_jX_i^2=0$).

Already GWAs of rank one include many interesting algebras such as $U\big(\mathfrak{sl}(2)\big)$ and its quantization and deformations, as well as several quantized function spaces and so called generalized down-up algebras, see for example \cite{CasShe2004} and references therein.
This unification of different algebras into one family has proved to be very fruitful. For example in \cite{DGO} the authors classified all simple and indecomposable weight modules over a rank one GWA, which in particular gives in one stroke a description of such modules for characteristic zero, quantized, and modular $U\big(\mathfrak{sl}(2)\big)$.

One of the original motivations to introduce the TGWAs was that in higher rank several algebras which one would expect to be GWAs, such as the multiparameter quantized Weyl algebras defined in \cite{Malte}, are not GWAs (at least not in an obvious way). However they are TGWAs as was shown in \cite{MT02}.
In \cite{MPT} some algebras of importance in the representation theory of $\mathfrak{gl}(n)$, namely the Mickelsson-Zhelobenko algebra $Z\big(\mathfrak{gl}(n),\mathfrak{gl}(n-1)\big)$ and the extended orthogonal Gelfand-Tsetlin algebras (including certain localizations of $U\big(\mathfrak{gl}(n)\big)$), were shown to be examples of TGWAs. It is expected that the corresponding quantized versions of these algebras are also TGWAs. Sergeev \cite{Sergeev} proved that some primitive quotients of $U\big(\mathfrak{gl}(3)\big)$ are TGWAs and used this fact to produce multivariable analogs of Hahn polynomials.

When it comes to representation theory, several classes of simple weight (with respect to the commutative subring $R$) modules over TGWAs have been classified in \cite{Hartwig,MT_def,MPT}, including simple graded modules \cite{MPT}. 
Bounded and unbounded $\ast$-representations were described in \cite{MT02}.

In this paper we investigate TGWAs from another point of view, namely that of graded algebras; every TGWA of rank $n$ is $\Z^n$-graded in a natural way. It is known that the class of higher rank GWAs, hence the class of TGWAs, includes all skew group algebras over $\Z^n$ (take $t_i=1$ for all $i$ in the definition above). Another related fact, proved in \cite{FutHart}, is that any TGWA where $t_i$ is regular for each $i$ can be embedded in a crossed product algebra.

Consider the following problem:
\begin{prob}\label{prob:essential}
Given a $G$-graded algebra $A=\bigoplus_{g\in G}$ where $A_e$ is commutative,
is it true that each non-zero ideal of $A$ has non-zero intersection with the centralizer
of $A_e$ in $A$?
\end{prob}
This is known to be true for strongly group graded rings \cite{oin10} and for crystalline graded rings \cite{oin09}.
However, TGWAs are in general not strongly graded nor crystalline graded. 
The first main result of this paper (see Section \ref{IdealInterSectionsForTGWA}) shows that this also holds for TGWAs:
\begin{thmA}
Let $A=A\TGWdat$ be a twisted generalized Weyl algebra. Each non-zero ideal of $A$ has non-zero intersection with the centralizer of $R$ in $A$.
\end{thmA}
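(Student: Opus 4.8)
The plan is to use the $\Z^n$-grading on the TGWA $A$ and exploit the structure of the degree-zero component $R$ together with the commuting "Cartan" elements that arise from the products $Y_iX_i = t_i$ and $X_iY_i = \sigma_i(t_i)$. I expect the centralizer of $R$ in $A$ to contain more than just $R$ itself — it should be generated over $R$ by certain monomials in the $X_i, Y_i$ whose net degree is zero but which nevertheless commute with all of $R$. The overall strategy is a grading-theoretic one: take a nonzero ideal $I$, pick a nonzero element $a \in I$ with support (the set of $\Z^n$-degrees occurring in $a$) of minimal size, and then manipulate $a$ by multiplying on the left and right by homogeneous elements of $A$ to "shrink" its support while staying inside $I$, until one lands inside the centralizer of $R$.

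First I would set up notation for the grading: write $a = \sum_{g \in \Supp(a)} a_g$ with $a_g$ homogeneous of degree $g$, and choose $a$ so that $|\Supp(a)|$ is minimal among nonzero elements of $I$. The key reduction is to show that any such minimal-support element automatically lies in the centralizer $C$ of $R$. To do this I would argue by contradiction: if $a$ fails to commute with some $r \in R$, then the commutator $ar - ra$ is again in $I$, is again supported within $\Supp(a)$ (since $r$ has degree zero and multiplication by $r$ preserves each graded component up to the twist by $\sigma_g$), yet has strictly smaller support because on at least one "anchor" degree the two contributions cancel. This contradicts minimality, forcing $a \in C$. The delicate point is to verify that the commutator genuinely kills at least one homogeneous component while remaining nonzero — this hinges on the fact that the action of $R$ on a homogeneous component of degree $g$ is by the automorphism $\sigma_g$, so $a_g r - r a_g = (\sigma_g(r) - r) a_g$ wait — more precisely one compares the coefficient-automorphisms across different degrees $g$ in the support.

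The main obstacle, as I see it, is controlling the regularity of the elements $t_i$ and their translates under $\sigma$. When one multiplies $a$ by $X_i$ or $Y_i$ to shift degrees and recombine the support, the relations $Y_iX_i = t_i$ and $X_iY_i = \sigma_i(t_i)$ introduce factors of $t_i$, and if these are zero-divisors the product could collapse to zero, destroying the element of $I$ one is trying to produce. So the heart of the argument will be a careful case analysis ensuring that the shrinking step never annihilates the element; this presumably requires either a regularity hypothesis on the $t_i$ built into the TGWA definition (cf.\ the embedding result of \cite{FutHart} for regular $t_i$) or a clever choice of which homogeneous components to target so as to avoid multiplication by a zero-divisor. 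I would handle this by first establishing a lemma describing exactly when $a_g r = r a_g$ for all $r \in R$ forces $a_g$ into the centralizer, and then bootstrapping from the single-support (homogeneous) case to the general minimal-support case.

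Finally, once minimality forces $a$ into $C$, the theorem follows immediately since $a$ is a nonzero element of $I \cap C$. The logical skeleton is therefore: (i) homogeneous decomposition and minimal-support selection; (ii) a commutator-shrinking lemma that sends non-centralizing elements to smaller-support elements of the same ideal; (iii) the zero-divisor control ensuring nonvanishing throughout. I expect step (ii) to be routine once the grading conventions are fixed, and step (iii) to be where the real work lies.
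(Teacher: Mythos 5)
Your skeleton---pick a nonzero $a\in I$ of minimal support, normalize by multiplying on left and right by homogeneous elements, then shrink the support with a commutator---is the paper's skeleton. But the commutator step is simpler than you describe: once you have arranged $a'_0\neq 0$, the degree-zero component of $[a',r]$ is $[a'_0,r]=0$ automatically, because $a'_0\in A_0=R$ and $R$ is commutative; so $|\Supp([a',r])|<|\Supp(a')|$ and minimality forces $[a',r]=0$. No comparison of the automorphisms $\si_g$ across different degrees of the support, and no clever choice of $r$, is needed.

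The genuine gap is in the normalization step, and you have misidentified where the real work lies. To produce $a'$ with $a'_0\neq 0$ you must know that for a nonzero homogeneous $a_h\in A_h$ there exist homogeneous $b,c$ with $\deg(b)+\deg(c)=-h$ and $ba_hc\neq 0$. This is \emph{not} a question of regularity of the $t_i$: Theorem A holds for arbitrary TGWAs with no hypothesis on the $t_i$, so the ``zero-divisor control'' you flag as the heart of the argument is a red herring. The correct ingredient is that the gradation form $\ga_A(a,b)=\mathfrak{p}_0(ab)$ of a TGWA is non-degenerate (Corollary \ref{cor:gamma_nondeg}), equivalently that $A$ has no nonzero graded ideal meeting $R$ trivially. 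This is where the definition $A=A'/\grRad(A')$ enters, but it is not purely formal: one must check that passing to the quotient by the gradical leaves no residual gradical, i.e.\ that $\grRad\bigl(A'/\grRad(A')\bigr)=\{0\}$. The paper obtains this from the commutation identity $ab=\si_g(ba)$ for $a\in A'_g$, $b\in A'_{-g}$ (Lemma \ref{lem:nondegtgwaflip}), which makes $\ga_{A'}$ $\Z^n$-symmetric, whence $\Rad(\ga_{A'})=\grRad(A')$ by Proposition \ref{prop:gradation_things}, and the quotient by the radical of the form always carries a non-degenerate form. Without this piece your argument cannot get started: if $a_h$ could only be multiplied to zero, you would never produce the element with nonzero degree-zero component on which the cancellation rests.
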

To establish this, we first prove that the gradation form (see Section \ref{sec:gradation_form}) of $A$ is always non-degenerate (Corollary \ref{cor:gamma_nondeg}), an interesting and useful fact in itself. Graded rings with this property have been studied before \cite{CohRow}. Then we can apply the general result from \cite[Theorem 3]{OL1}.
We introduce the notion of \emph{regularly graded algebras}, which is a generalization of crystalline graded algebras,
and provide exact conditions for a TGWA to be regularly graded (Theorem \ref{thm:regularly_graded_char}).

The second problem that we address is the following.
\begin{prob}
Given a $G$-graded algebra $A=\bigoplus_{g\in G} A_g$ where $A_e$ is commutative, when is
$A_e$ maximal commutative in $A$? When is the centralizer of $A_e$ in $A$ commutative?
\end{prob}
If one can prove that a commutative subalgebra $R$ has a commutative centralizer in $A$, then it
follows that the centralizer is the unique maximal commutative subalgebra in $A$
which contains $R$. This situation is of great value in representation theory when
studying weight modules over $A$ with respect to the commutative subalgebra $R$.
In \cite{MT_def} a sufficient condition for the degree zero subalgebra of a TGWA to have
a commutative centralizer was given.
We generalize this result by Theorem \ref{thm:commutant_suff} and Remark \ref{Rem:RDomZnSimple}
and provide necessary and sufficient conditions:
\begin{thmB}
Let $A=A\TGWdat$ be a twisted generalized Weyl algebra. Suppose that $R$ is a domain or that $R$ is $\Z^n$-simple with respect to the action $\sigma : \Z^n \to \Aut_{\K}(R)$. Then the centralizer of $R$ in $A$ is commutative if and only if $\ker(\si)$ has a $\Z$-basis $\{k_1,\ldots,k_s\}$ such that $[A_{mk_i},A_{lk_j}]=0$ for all $m,l\in\Z$ and all $i,j \in \{1,\ldots,s\}$, $i\neq j$, where $[a,b]=ab-ba$.
\end{thmB}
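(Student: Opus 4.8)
The plan is to first reduce the centralizer to a graded subalgebra indexed by $\ker(\si)$, and then analyze commutativity one direction at a time. Write $K=\ker(\si)\subseteq\Z^n$; being a subgroup of a free abelian group, $K$ is itself free of some finite rank $s$, so it admits $\Z$-bases of size $s$. For a homogeneous $a\in A_g$ and $r\in R$ one has $ar=\si_g(r)a$, whence $[a,r]=(\si_g(r)-r)a$. Thus $A_g\subseteq C_A(R)$ whenever $g\in K$, while for $g\notin K$ one may pick $r_0$ with $s_0:=\si_g(r_0)-r_0\neq 0$, so that any $a\in A_g\cap C_A(R)$ satisfies $s_0 a=0$. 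This is where the hypothesis enters: if $R$ is a domain, the relevant graded component is torsion-free over $R$ and $a=0$ follows; if $R$ is $\Z^n$-simple, then the ideal generated by $\{\si_g(r)-r:r\in R\}$ is nonzero and $\si$-invariant (using that $\si$ is abelian), hence equals $R$, again forcing $a=0$. In either case I obtain $C_A(R)=A_K:=\bigoplus_{g\in K}A_g$, so that $C_A(R)$ is commutative if and only if $[A_g,A_h]=0$ for all $g,h\in K$. Note also that, since $K\subseteq\ker(\si)$ and $R$ is commutative, $R$ lies in the center of $A_K$.

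The forward implication is then immediate: if $A_K$ is commutative, then in particular $[A_{mk_i},A_{lk_j}]=0$ for every basis and all $m,l$, so the stated condition holds for the chosen basis. For the reverse implication I assume a basis $\{k_1,\ldots,k_s\}$ of $K$ with $[A_{mk_i},A_{lk_j}]=0$ for $i\neq j$ and must upgrade this cross-direction vanishing to full commutativity of $A_K$. There are two ingredients. First, \emph{same-direction commutativity is automatic}: because $k_i\in K=\ker(\si)$ one has $\si_{k_i}=\identity$, so the relations governing the subalgebra generated over $R$ by $A_{k_i}$ and $A_{-k_i}$ degenerate to commuting relations (distinct powers in a single direction commute, and the two orders of a $\pm k_i$ product both land in the central subring $R$ and coincide); hence $[A_{mk_i},A_{lk_i}]=0$ for all $m,l$. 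Second, I write an arbitrary pair $g=\sum_i a_i k_i$, $h=\sum_i b_i k_i$ and argue by induction on $\sum_i(|a_i|+|b_i|)$, using biadditivity of the commutator and centrality of $R$ to transpose homogeneous factors one direction at a time, invoking the basis identities (cross-direction from the hypothesis, same-direction from the first ingredient).

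To turn this transposition scheme into an actual vanishing of the homogeneous commutator $[A_g,A_h]\subseteq A_{g+h}$, I appeal to the non-degeneracy of the gradation form (Corollary \ref{cor:gamma_nondeg}): it suffices to show that the degree-$e$ component of $[A_g,A_h]\cdot A_{-(g+h)}$ is zero, and this reduces to the already-established basis identities together with the centrality of $R$ in $A_K$. I expect the main obstacle to be precisely this reverse implication, namely the propagation of commutativity from the basis to all of $K$. The difficulty is that $A_K$ need not be generated as an algebra by the pieces $A_{\pm k_i}$, so one cannot simply verify relations on algebra generators; instead the commutation data attached to the grading must be shown to behave bimultiplicatively in $g$ and $h$. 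It is exactly here that the non-degeneracy of the gradation form and the standing hypothesis on $R$ (domain or $\Z^n$-simple, controlling $R$-torsion in each homogeneous component) do the essential work.
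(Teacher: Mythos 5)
Your reduction of the statement to the commutativity of $A_K=\bigoplus_{g\in K}A_g$ is sound and coincides with the paper's first step (Theorem \ref{CommutantDescription}): the inclusion $A_K\subseteq C_A(R)$ follows from $[a,r]=(\si_g(r)-r)a$, and your treatment of the $\Z^n$-simple case via the invariant ideal generated by $\{\si_g(r)-r\mid r\in R\}$ is a valid (slightly slicker) variant of the paper's argument, which instead uses the annihilator ideal $\{s\in R\mid(\si_g(r)-r)s=0,\ \forall r\in R\}$. One caveat: torsion-freeness of $A_g$ over $R$ in the domain case is not free of charge; it requires Corollary \ref{cor:gamma_nondeg} to produce $c\in A_{-g}$ with $a_gc\neq 0$, so you should cite the non-degeneracy of the gradation form already here, not only later. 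The forward implication is indeed immediate.

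The reverse implication is where your proposal has a genuine gap, concentrated in exactly the two places you gesture at. First, your claim that same-direction commutativity $[A_{mk_i},A_{lk_i}]=0$ is ``automatic'' is not: your justification rests on a premise you yourself later reject, namely that $\bigoplus_{m\in\Z}A_{mk_i}$ is generated over $R$ by $A_{\pm k_i}$. It need not be, so ``the relations degenerate to commuting relations'' says nothing about $A_{mk_i}$ for $|m|\ge 2$. The paper devotes Theorem \ref{thm:commutantsubalg} to this point: for $a\in A_{kg}$, $b\in A_{mg}$ with $g\in K$, one right-multiplies by $c^{k+m}$ for a monic monomial $c$ of degree $-g$ to land in degree zero, cycles the factors using the flip identity $xy=\si_h(yx)$ for $x\in A'_h$, $y\in A'_{-h}$ (Lemma \ref{lem:nondegtgwaflip}), and cancels $c^{k+m}$ via the regularity property of Theorem \ref{thm:regularly_graded_char}. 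Second, in the combination step, asserting that the vanishing of the degree-zero component of $[A_g,A_h]\cdot A_{-(g+h)}$ ``reduces to the already-established basis identities together with the centrality of $R$'' is precisely the assertion requiring proof, since elements of $A_{-(g+h)}$ likewise do not factor through the $A_{\pm k_i}$. The paper's proof of Theorem \ref{thm:commutant_suff} resolves this by choosing \emph{specific} monic monomials $a'_i,b'_i,c_i$ of degrees $-\alpha_ik_i,-\beta_ik_i,\beta_ik_i$, establishing $aba'b'c=baca'b'$ by two applications of the flip identity (equation \eqref{eq:suffcalc}), and only then invoking the hypothesis together with the rank-one result to commute $c$ past $a'$ and $b'$, with regularity finishing the cancellation. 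The flip identity is the engine of both steps and never appears in your proposal; as written, the reverse implication does not go through.
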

As an application we prove, in Theorem \ref{thm:TCcommutant}, that the family of TGWAs, $\mathcal{T}_q(C)$, parametrized by a scalar $q$ and a symmetric generalized Cartan matrix $C$, introduced in \cite{H09}, satisfies this condition (but not the condition in \cite{MT_def}). We also prove that the number $s$ in Theorem B in this case is equal to the number of connected components of the Coxeter graph of $C$ (Theorem \ref{thm:Kbas}).

Finally, the third problem we consider is the question of simplicity of TGWAs.
\begin{prob} Give necessary and sufficient conditions, expressed only in terms of the initial data $\TGWdat$, for a twisted generalized Weyl algebra $A\TGWdat$ to be simple.
\end{prob}
The phrase ``expressed in terms of the initial data'' is not meant to be precise, but rather to indicate the type of condition we aim for. Classical Weyl algebras are well-known to be simple, but generalized Weyl algebras are not always simple. A result of Jordan \cite[Theorem~6.1]{J93} provides a criterion for the simplicity of a degree one
generalized Weyl algebra  $R(\si,t)$, where $R$ is a commutative Noetherian ring,
 $\si\in\Aut(R)$ and $t\in R$. Namely,
 $R(\si, t)$ is simple if and only if (i) $\sigma$ has infinite
order; (ii) $t$ is a regular element in $R$; (iii) $R$ has no
$\sigma$-invariant ideals except $\{0\}$ and $R$; and (iv) for
all positive integers $m$, $R t + R \si^m(t) = R$.

One area where this type of results are of importance is the area of dynamical systems.
In \cite{SvenssonThesis} skew group algebras associated to dynamical systems are studied
and conditions for simplicity of certain $\Z$-graded skew group algebras and their analytical analogues are obtained.
The $\Z$-graded skew group algebras are in fact examples of GWAs of rank one.

In another paper by Jordan, \cite{J95}, the simplicity of a certain localization
of the multiparameter quantized Weyl algebra is proved. In \cite{MT02} it was proved
that the multiparameter quantized Weyl algebra is an example of a TGWA, and in \cite{FutHart}
that Jordan's simple localization is also a TGWA.

The third main result of this paper (Theorem \ref{thm:Simplicity_of_TGWAs}) unify and extend these two simplicity results by Jordan to
a large family of twisted generalized Weyl algebras.
\begin{thmC}
Let $A=A\TGWdat$ be a twisted generalized Weyl algebra where $R$ is Noetherian (or more generally, it is enough that $A$ is so called $R$-finitistic), and
where $t_i$ is regular in $R$ for each $i$. Then $A$ is simple if and only if
the following three assertions hold:
\begin{enumerate}[{\rm (i)}]
\item $AX_{i_1}X_{i_2}\cdots X_{i_m}A=A$ for all $m\in\Z_{\ge 0}$ and all $i_1,\ldots,i_m\in\{1,\ldots,n\}$;
\item $R$ is $\Z^n$-simple with respect to the action given by $\si$;
\item The center of $A$ is contained in $R$.
\end{enumerate}
\end{thmC}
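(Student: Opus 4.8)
The plan is to prove both implications, handling the ``only if'' direction largely by contraposition and reserving the real work for the ``if'' direction, where the Ore localization advertised in the abstract is the central device.

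For the forward direction, assume $A$ is simple. For (i), since each $t_i$ is regular one checks that every ordered product $X_{i_1}\cdots X_{i_m}$ is nonzero (using that $A$ embeds into a crossed product when the $t_i$ are regular, cf.\ \cite{FutHart}); hence the two-sided ideal $AX_{i_1}\cdots X_{i_m}A$ is nonzero and therefore equals $A$. For (ii), if $I\subsetneq R$ were a nonzero $\sigma$-invariant ideal, then $\sigma$-invariance gives $X_iI=IX_i$ and $Y_iI=IY_i$, so $AI=IA$ is a two-sided ideal whose degree-zero component is $I\neq R$; thus $AI$ is proper and nonzero, contradicting simplicity. For (iii), I would use that the center of a $\Z^n$-graded algebra is itself graded, so a central element outside $R$ has a nonzero homogeneous component of nonzero degree; passing to the localization below, such an element becomes a homogeneous central element of nonzero degree in a $\Z^n$-crossed product, which is incompatible with its simplicity.

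For the backward direction, assume (i), (ii), (iii). Let $S$ be the multiplicative subset of $R$ generated by $\{\sigma_g(t_i): g\in\Z^n,\ i\in\{1,\ldots,n\}\}$. Since each $t_i$ is regular and central and the $\sigma_g$ are automorphisms, $S$ consists of regular central elements, and the $R$-finitistic hypothesis is precisely what guarantees the Ore condition, so that $\hat A=S^{-1}A$ exists with $A\hookrightarrow\hat A$. In $\hat A$ each $X_i$ becomes invertible (with inverse $t_i^{-1}Y_i$), so $\hat A$ is strongly $\Z^n$-graded, i.e.\ a crossed product $\hat A=\hat R\rtimes\Z^n$ with $\hat A_0=\hat R=S^{-1}R$. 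Because $S$ is $\sigma$-stable, condition (ii) lifts to show $\hat R$ is $\Z^n$-simple. Invoking the classical simplicity criterion for a crossed product of a $\Z^n$-simple ring by $\Z^n$ — which additionally asks that $Z(\hat A)$ contain no homogeneous elements of nonzero degree — together with (iii), one concludes that $\hat A$ is simple. To descend, take a nonzero two-sided ideal $J\triangleleft A$. By Theorem~A, $J\cap C_A(R)\neq 0$; more usefully, since $S$ is regular, $S^{-1}J$ is a nonzero ideal of the simple ring $\hat A$, whence $S^{-1}J=\hat A$. Clearing denominators produces an element of $S$ lying in $J$, that is, a nonzero product $s$ of twisted $t_i$'s with $s\in J$. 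One then deduces $J=A$ from (i), by showing that the two-sided ideal generated by any product of twisted $t_i$'s is all of $A$; this is where (i), phrased for arbitrary ordered products of the $X_i$, is used in full strength rather than just for single generators.

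The hard part will be twofold. First, establishing simplicity of $\hat A$ requires pinning down $Z(\hat A)$ and transferring condition (iii) from $A$ to $\hat A$; this is delicate because the center of a localization need not be the localization of the center, and because $S$ is not central in $A$. I expect to handle it by computing $Z(\hat A)$ directly from the crossed-product structure — a homogeneous central element $z\,u_g$ forces $\sigma_g$ to be implemented by $z$ on $\hat R$ — and relating such elements back to $Z(A)$ through the embedding. Second, the clean implication ``$s\in J\Rightarrow J=A$'' is the other delicate point: I must verify, via the relations $Y_iX_i=t_i$ and $X_iY_i=\sigma_i(t_i)$, that products of twisted $t_i$'s generate $A$ as a two-sided ideal once all the ideals $AX_{i_1}\cdots X_{i_m}A$ equal $A$. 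These two steps, rather than the localization machinery itself, are where I anticipate the genuine obstacles to lie.
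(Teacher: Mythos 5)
Your plan localizes at the wrong set, and this opens a gap that your own ``delicate points'' do not close. The paper inverts the multiplicative monoid $\mathcal{X}$ generated by $X_1,\ldots,X_n$; the $R$-finitistic hypothesis is precisely what makes $\mathcal{X}$ an Ore set (Theorem \ref{thm:Ore}), not what makes your set $S$ of twisted $t_i$'s one ($S\subseteq R$ is an Ore set already by \cite[Theorem 2.15]{FutHart} once the $t_i$ are regular --- indeed your argument never actually uses finitisticity, which is a warning sign). With $B=\mathcal{X}^{-1}A$ the descent is immediate: $1=x^{-1}a$ with $x\in\mathcal{X}$, $a\in J$ gives $x=a\in J$, and condition (i) applies verbatim to give $A=AxA\subseteq J$. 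In your version the element extracted from $J$ is a product $s$ of elements $\sigma_g(t_i)$, and condition (i) says nothing directly about $AsA$. The implication ``(i) implies $AsA=A$ for every such $s$'' is not a routine consequence of $Y_iX_i=t_i$ and $X_iY_i=\sigma_i(t_i)$: already in rank one, $AtA\cap R$ is spanned by elements like $t$, $\sigma(t)^2$, $\sigma^{-1}(t)t^2$, each carrying extra twisted factors coming from $Z^{(k)}Z^{(-k)}$, and for products in higher rank note that $As_1A=As_2A=A$ does not imply $As_1s_2A=A$. The same unproved claim is what you would need to transfer (iii) to $\hat A$: the paper's device (Lemma \ref{lem:B_properties_liten}(iv)) shows $Z(B)\subseteq A$ by writing a central element as $x^{-1}a$ and using $1=\sum c_ixd_i$, which requires the denominators to generate $A$ as an ideal; clearing a denominator $s\in S$ instead fails because $s\zeta$ is not central when $s$ is not $\sigma$-invariant.

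The forward direction is essentially the paper's ((i) from regularity of the monomials, (ii) as in Lemma \ref{SimpleImpliesZnSimple}), except that ``a homogeneous central element of nonzero degree is incompatible with simplicity'' is not true as stated; you need the argument of Lemma \ref{lem:center_field}: simplicity makes $Z(A)$ a graded field, so $1+a$ is invertible for $0\neq a\in Z(A)_g$, $g\neq 0$, and comparing extreme-degree terms of its inverse with respect to an ordering of $\Z^n$ gives the contradiction. To repair the backward direction, work with $B=\mathcal{X}^{-1}A$: show every nonzero ideal of $B$ meets $Z(B)\cap\bigl(1+\sum_{g\neq 0}B_g\bigr)$ using $\Z^n$-simplicity of $R$ (Theorem \ref{thm:B_intersection}), deduce $Z(B)=Z(A)\subseteq R$ from (i) and (iii), conclude $1\in J$, and descend as above.
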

The method we use to prove this result is a localization technique inspired by \cite{J95}. To be able to carry out the localization one needs to know that certain subsets of the algebra are Ore sets. It turns out that this 
condition is in fact equivalent to the TGWA to be so called \emph{$R$-finitistic}, a notion
introduced in \cite{H09} (where it was called ``locally finite over $R$'').
Both conditions hold if $R$ is Noetherian.
In a special case, still covering all higher rank GWAs, condition (i) of Theorem C can be made completely explicit (Theorem \ref{thm:Simplicity_of_TGWAs_of_type_A1^n}).

In Section \ref{Sect:Examples} we provide several interesting examples of TGWAs
and display some phenomena, which are not possible in other common classes of graded algebras.
The paper is concluded with a discussion about our results for TGWAs in the context of graded algebras.
We describe how our results for TGWAs are related to, and in several cases differ from,
the corresponding results for other classes of graded algebras.

\section*{Acknowledgements}
The first author is grateful to V. Futorny for interesting discussions.
The second author is grateful to P. Lundstr\"{o}m for interesting discussions.

\section{Preliminaries}
\subsection{Notation and definitions}\label{NotationDefinitions}
Rings and ring homomorphisms are always assumed to be unital. Ideals are understood
to be two-sided. Throughout this paper $\K$ will
denote an arbitrary commutative ring, unless otherwise stated.

Suppose that $S$ is a ring.
The group of units in $S$ is denoted by $U(S)$. 
An element $a\in S$ is said to be \emph{regular in $S$} if $a$ is
not a left nor a right zero-divisor in $S$.
The set of regular elements of $S$ is denoted by $S_{\reg}$.
If $a,b \in S$, then we let $[a,b]$ denote the element $ab-ba \in S$.
The \emph{centralizer} (or \emph{commutant}) of a subset $T\subseteq S$ is denoted by $C_S(T)$ and
is defined as the set $\big\{s\in S\mid st=ts,\;\,\forall t\in T\big\}$
which is clearly a subring of $S$.
If $T$ is commutative and $T=C_S(T)$, then $T$ is said to be \emph{maximal commutative} in $S$.
The \emph{center} of $S$, $C_S(S)$, is denoted by $Z(S)$.

If $X$ is a set, then the \emph{free $S$-bimodule on $X$} is defined as
$SXS:=\bigoplus_{x\in X}SxS$ where for $x\in X$ each summand $SxS$ is by definition isomorphic as an $S$-bimodule
to $S\otimes_\Z S$ via $s_1 x s_2\mapsto s_1\otimes s_2$.
The \emph{free $S$-ring $F_S(X)$ on $X$} is defined as the tensor algebra 
of $SXS$ over $S$. Namely, $F_S(X):=\bigoplus_{n=0}^\infty (SXS)^{\otimes_S n}$,
where $(SXS)^{\otimes_S n}=(SXS)\otimes_S \cdots \otimes_S(SXS)$ ($n$ factors)
and $(SXS)^{\otimes_S 0}=S$ by convention. Thus, a general element in $F_S(X)$
is a sum of monomials of the form $s_1 x_1 s_2 x_2\cdots s_k x_k s_{k+1}$
where $s_i\in S$ and $x_i\in X$, for $i\in \{1,\ldots,k\}$. There is a natural ring homomorphism
$S\to F_S(X)$ given by inclusion into the degree zero component.

Suppose that $S$ is a $\K$-algebra, i.e.
$S$ is a ring together with a ring homomorphism $\eta_S:\K\to S$
such that $\eta_S(\la)s=s\eta_S(\la)$ for all $s\in S$, $\la\in \K$.
Suppose that $S'$ is another $\K$-algebra. A $\K$-algebra homomorphism $\varphi:S\to S'$
is a ring homomorphism 
such that $\varphi\circ \eta_{S}=\eta_{S'}$.
If $X$ and $Y$ are nonempty subsets of $S$,
then $XY$ denotes the $\K$-linear span of the set $\{xy\mid x\in X, y\in Y\}$.

If $G$ is a group acting as
$\K$-algebra automorphisms of $S$, $\rho: G \ni g\mapsto \rho_g\in\Aut_\K(S)$,
 then an ideal $I \subseteq S$ is said to be \emph{$G$-invariant} if
$\rho_g(I) = I$ for all $g\in G$. If $S$ and $\{0\}$ are the only $G$-invariant ideals of $S$, then $S$ is said to be \emph{$G$-simple}.

Recall that, given a group $G$, a \emph{$G$-gradation on $S$} is a
 set of $\K$-submodules, $\{S_g\}_{g \in G}$, of $S$ such that $S =
\bigoplus_{g \in G} S_g$ and $S_g S_h \subseteq S_{gh}$, for $g,h \in G$.
If in addition $S_g S_h = S_{gh}$, for all $g,h \in G$, then the gradation is a \emph{strong $G$-gradation}.
A \emph{(strongly) $G$-graded $\K$-algebra} is a $\K$-algebra together with a (strong) $G$-gradation on it.
Each element $a$ of a $G$-graded $\K$-algebra $S$ can be written as $a= \sum_{g \in G} a_g$
where $a_g \in S_g$, for $g\in G$, and $a_g=0$ for all but finitely many $g\in G$.
The \emph{support} of $a$ is defined as the set $\Supp(a)=\{g\in G \,\mid\, a_g \neq 0\}$,
the cardinality of which is denoted by $|\Supp(a)|$.
For $g\in G$, the $\K$-submodule $S_g$ is referred to as the \emph{homogeneous component}
of degree $g$. An element $s\in S$ is said to be \emph{homogeneous of degree $g$},
written $\deg(s)=g$, if $s\in S_g$ for $g\in G$.
The neutral element of $G$ is denoted by $e$ and the subring $S_e$
 is refered to as the \emph{neutral component} of $S$.
A left (right, two-sided) ideal of a $G$-graded $\K$-algebra $S$ is said to be \emph{graded} if 
$I = \sum_{g\in G} I_g$, where $I_g:= S_g\cap I$ for $g\in G$.
If $I$ is a graded ideal, then the quotient ring $S/I$ is a $G$-graded $\K$-algebra $\bigoplus_{g\in G} (S/I)_g$ with a gradation defined by $(S/I)_g = \big\{s+I\mid s\in S_g\big\}$ for $g\in G$.

\subsection{Bilinear forms and radicals associated to group graded algebras}
\subsubsection{The gradation form}\label{sec:gradation_form}
Let $G$ be a group with neutral element $e$ and $A=\bigoplus_{g\in G} A_g$ a $G$-graded $\K$-algebra.
(By this it is understood that, for each $g\in G$, $A_g$ is a $\K$-submodule of $A$.)
Denote by $\mathfrak{p}_e:A\to A_e$ the graded projection onto the neutral component,
i.e $\mathfrak{p}_e(\sum_{g\in G} a_g)=a_e$, where $a_g\in A_g$ for $g\in G$.
To each $G$-graded $\K$-algebra $A$ we associate a map, which we shall refer to as the \emph{gradation form} of $A$.
It is defined in the following way:
\begin{equation}
\begin{gathered}
\gamma_A :A\times A\to A_e, \quad (a,b) \mapsto \mathfrak{p}_e(ab) \\
\end{gathered}
\end{equation}
This form was introduced by Cohen and Rowen in \cite{CohRow} in their study of $G$-graded rings.
Note that $\gamma_A$ depends on the gradation on $A$.
It is immediate that $\ga_A$ is $\K$-bilinear and that
\begin{equation}\label{eq:gamma_adjoint}\ga_A(a,bc)=\ga_A(ab,c)
\end{equation}
for all $a,b,c\in A$. Furthermore, if $a=\sum_{g\in G} a_g$ and $b=\sum_{g\in G}b_g$ where $a_g,b_g\in A_g$, for $g\in G$, then
\begin{equation}\label{eq:gamma_formula} \ga_A(a,b)=\sum_{g\in G}a_g b_{g^{-1}}. \end{equation}
The left respectively right radical of $\ga_A$ is defined as follows:
\begin{align}
\Rad_\mathrm{l}(\gamma_A):=\big\{b\in A\mid \gamma_A(a,b)=0, \,\,\forall a\in A\big\},\\
\Rad_\mathrm{r}(\gamma_A):=\big\{a\in A\mid \gamma_A(a,b)=0, \,\,\forall b\in A\big\}.
\end{align}
It is easy to check that
$\Rad_\mathrm{l}(\gamma_A)$ is a graded left ideal of $A$,
while $\Rad_\mathrm{r}(\gamma_A)$ is a graded right ideal of $A$.

\begin{definition}\label{FormProperties}
Let $A$ be a $G$-graded $\K$-algebra with gradation form $\ga_A$.
\begin{enumerate}[(i)]
	\item If $\Rad_\mathrm{l}(\ga_A)=\Rad_\mathrm{r}(\ga_A)$, then $\ga_A$ is said to be \emph{radical-symmetric}
	and the left (and right) radical is simply denoted by $\Rad(\ga_A)$.
	\item We say that $\ga_A$ is \emph{non-degenerate} if it is radical-symmetric and $\Rad(\ga_A)=\{0\}$.
	\item If there exists an action of $G$ by $\K$-algebra automorphisms on $A_e$ (denoted $G\ni g\mapsto \rho_g\in\Aut_\K(A_e)$)
 such that, for any $g\in G$,
\begin{equation}
\ga_A(a,b)=\rho_g\big(\ga_A(b,a)\big)
\end{equation}
for all $a\in A_g$ and all $b\in A_{g^{-1}}$, then $\ga_A$ is said to be \emph{$G$-symmetric}.
\end{enumerate}
\end{definition}

\begin{remark}
Note that if $\ga_A$ is $G$-symmetric, then it is radical-symmetric.
Moreover, if $\ga_A$ is radical-symmetric, then $\Rad(\ga_A)$ is a two-sided graded ideal of $A$.
\end{remark}

\begin{remark}\label{RemarkNonDegCohRowOinertLundstrom}
Also note that $\gamma_A$ is non-degenerate in the sense of Definition \ref{FormProperties}
if and only if
it is non-degenerate in the sense of Cohen and Rowen \cite{CohRow}.
Furthermore, if this is the case, then the gradation on $A$ is left (and right)
non-degenerate in the sense of \cite[Definition 2]{OL1}.
\end{remark}

If $A$ is a $G$-graded $\K$-algebra with radical-symmetric gradation form $\ga_A$,
then we put $\bar A:=A/\Rad(\ga_A)$. In that case $\Rad(\ga_A)$ is a graded ideal
and hence the quotient algebra $\bar A$ has a natural $G$-gradation induced by
the one on $A$; 
$\bar A=\bigoplus_{g\in G} \bar A_g$, where
 $\bar A_g=\big\{a+\Rad(\ga_A)\in\bar A\mid a\in A_g\big\}$ for $g\in G$.
 For each $g\in G$ we have that $\bar A_g\simeq A_g/(A_g\cap\Rad(\ga_A))$ as $\K$-modules.
 Since $\bar A$ is a $G$-graded $\K$-algebra, this allows us to define a gradation form on
 this algebra as well.
 The following lemma is proved under the hypothesis that $\gamma_A$ is radical-symmetric.
 
\begin{lemma}
The gradation form $\ga_{\bar A}$ on $\bar A$ is non-degenerate.
\end{lemma}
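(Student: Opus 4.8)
The plan is to work throughout with the ideal $N := \Rad(\ga_A)$, which by the standing hypothesis of radical-symmetry is a two-sided \emph{graded} ideal, and to prove the two assertions contained in non-degeneracy separately: that $\ga_{\bar A}$ is radical-symmetric and that its radical is trivial. Writing $\bar x := x + N$, I would first record the compatibility of the two gradation forms under the quotient map. Since $N$ is graded, the graded projection $\mathfrak{p}_e$ descends to $\bar A$, giving $\bar{\mathfrak{p}}_e(\bar x) = \overline{\mathfrak{p}_e(x)}$, and hence $\ga_{\bar A}(\bar a, \bar b) = \bar{\mathfrak{p}}_e(\overline{ab}) = \overline{\ga_A(a,b)}$ for all $a,b\in A$. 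Using the $\K$-bilinearity of $\ga_A$ together with the defining properties $\ga_A(a,m)=0$ for $m\in\Rad_\mathrm{l}(\ga_A)$ and $\ga_A(n,b)=0$ for $n\in\Rad_\mathrm{r}(\ga_A)$, one checks moreover that $\ga_A(a+n,b+m)=\ga_A(a,b)$ exactly, so this descent is well defined and agrees with $\ga_{\bar A}$ after projecting onto $\bar A_e$.

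The decisive step, which I expect to be the one place where the specific structure is genuinely used rather than formal bookkeeping, is the observation that $N\cap A_e=\{0\}$. Indeed, the identity $1$ is homogeneous of degree $e$, so for any $c\in A_e$ we have $\ga_A(c,1)=\mathfrak{p}_e(c\cdot 1)=c$; if in addition $c\in N=\Rad_\mathrm{r}(\ga_A)$, then $\ga_A(c,1)=0$, forcing $c=0$. Thus the neutral component meets the radical only in zero. Everything preceding and following this observation is routine, but this is where the presence of a unit in $A_e$ does the work.

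With $N\cap A_e=\{0\}$ in hand the conclusion is immediate. An element $\bar b$ lies in $\Rad_\mathrm{l}(\ga_{\bar A})$ if and only if $\overline{\ga_A(a,b)}=0$, i.e. $\ga_A(a,b)\in N$, for all $a\in A$. But $\ga_A(a,b)\in A_e$ always holds, so $\ga_A(a,b)\in N\cap A_e=\{0\}$, whence $\ga_A(a,b)=0$ for all $a$; this says exactly that $b\in\Rad_\mathrm{l}(\ga_A)=N$, i.e. $\bar b=0$. Therefore $\Rad_\mathrm{l}(\ga_{\bar A})=\{0\}$, and the symmetric argument (invoking $N\cap A_e=\{0\}$ once more, now through $\Rad_\mathrm{r}(\ga_A)$) yields $\Rad_\mathrm{r}(\ga_{\bar A})=\{0\}$. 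Since both radicals vanish they coincide, so $\ga_{\bar A}$ is radical-symmetric with $\Rad(\ga_{\bar A})=\{0\}$, i.e. non-degenerate, as claimed.
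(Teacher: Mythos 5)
Your proof is correct and follows essentially the same route as the paper's: the key step in both is the observation that $\Rad(\ga_A)\cap A_e=\{0\}$ (via $\ga_A(c,1)=c$), combined with the fact that $\ga_{\bar A}(\bar a,\bar b)$ is the image of $\ga_A(a,b)$ in $\bar A$. Your handling of radical-symmetry of $\ga_{\bar A}$ (deducing it from the separate vanishing of both radicals rather than asserting it up front) is a minor and harmless variation in bookkeeping.
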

\begin{proof}
Since $\ga_A$ is radical-symmetric, so is $\ga_{\bar A}$.
Take an arbitrary $\bar a\in\Rad(\ga_{\bar A})$, i.e. such that
 $\ga_{\bar A}(\bar a,\bar b)=0$ for all $\bar b\in \bar A$.
Choose $a\in A$ such that $\bar a=a+\Rad(\ga_A)$. 
By \eqref{eq:gamma_formula} we get
 $\ga_A(a,b)+\Rad(\ga_A)=\sum_{g\in G} a_gb_{g^{-1}}+\Rad(\ga_A)=\sum_{g\in G} \bar a_g\bar b_{g^{-1}}=\ga_{\bar A}(\bar a,\bar b)=0$ in $\bar A$, for any $b\in A$.
Thus $\ga_A(a,b)\in \Rad(\ga_A)\cap A_e$ for all $b\in A$. However, $1\in A_e$ and $\ga_A(r,1)=r=\ga_A(1,r)$ for each $r\in A_e$ and hence
$\Rad(\ga_A)\cap A_e=\{0\}$. This shows that $\ga_A(a,b)=0$ for all $b\in A$, which means that $a\in \Rad(\ga_A)$. Thus $\bar a=0$ in $\bar A$, proving that
$\Rad(\gamma_{\bar A})=\{0\}$, i.e. $\gamma_{\bar A}$ is non-degenerate.
\end{proof}

\subsubsection{The gradical}
Let $A$ be a $G$-graded $\K$-algebra.
The \emph{gradical} of $A$ (short for \emph{gradation radical}),
denoted by $\grRad(A)$, is defined as the sum of all graded ideals $J$ of $A$ which satisfy
$A_e\cap J=\{0\}$. The gradical $\grRad(A)$ is itself a graded ideal of $A$ such that $A_e\cap \grRad(A)=\{0\}$,
and by construction it is the unique maximal element in the set of all such graded ideals (ordered by inclusion).

The gradical $\grRad(A)$ is related to the radicals of the gradation form $\ga_A$ of $A$,
defined in Section \ref{sec:gradation_form}, in the following way:

\begin{prop}\label{prop:gradation_things}
Let $A$ be a $G$-graded $\K$-algebra with gradation form $\ga_A$. The following holds:
\begin{enumerate}[{\rm (i)}]
\item $\grRad(A)\subseteq \Rad_\mathrm{l}(\ga_A)\cap \Rad_\mathrm{r}(\ga_A)$;
\item If $\ga_A$ is $G$-symmetric, then $\Rad(\ga_A)=\grRad(A)$.
\end{enumerate}
\end{prop}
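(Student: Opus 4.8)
The plan is to exploit the defining property of the gradical---that $\grRad(A)$ is the largest graded ideal meeting $A_e$ trivially---together with the formula $\gamma_A(a,b)=\mathfrak{p}_e(ab)$ and the fact that $1\in A_e$.

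For part (i), I would first show that every single graded ideal $J$ of $A$ with $A_e\cap J=\{0\}$ is already contained in $\Rad_\mathrm{l}(\gamma_A)\cap\Rad_\mathrm{r}(\gamma_A)$, and then pass to the sum over all such $J$. Fix $b\in J$ and an arbitrary $a\in A$. Since $J$ is a two-sided ideal, $ab\in J$; since $J$ is graded, its neutral component lies in $J$, so $\mathfrak{p}_e(ab)\in A_e\cap J=\{0\}$, whence $\gamma_A(a,b)=\mathfrak{p}_e(ab)=0$. As $a$ was arbitrary this gives $b\in\Rad_\mathrm{l}(\gamma_A)$, and the symmetric computation with $ba$ in place of $ab$ yields $b\in\Rad_\mathrm{r}(\gamma_A)$. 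Because $\Rad_\mathrm{l}(\gamma_A)\cap\Rad_\mathrm{r}(\gamma_A)$ is a $\K$-submodule, summing over all admissible $J$ gives $\grRad(A)\subseteq\Rad_\mathrm{l}(\gamma_A)\cap\Rad_\mathrm{r}(\gamma_A)$.

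For part (ii), the $G$-symmetry of $\gamma_A$ guarantees, via the preceding remark, that $\gamma_A$ is radical-symmetric, so $\Rad_\mathrm{l}(\gamma_A)=\Rad_\mathrm{r}(\gamma_A)=:\Rad(\gamma_A)$ is a two-sided graded ideal. One inclusion, $\grRad(A)\subseteq\Rad(\gamma_A)$, is immediate from part (i). For the reverse inclusion I would verify that $\Rad(\gamma_A)$ itself belongs to the family of graded ideals whose sum defines the gradical, i.e.\ that $A_e\cap\Rad(\gamma_A)=\{0\}$: if $r\in A_e\cap\Rad(\gamma_A)$, then using $1\in A_e$ and $r\in\Rad_\mathrm{l}(\gamma_A)$ we obtain $r=\mathfrak{p}_e(r)=\gamma_A(1,r)=0$. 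By the maximality of $\grRad(A)$ among graded ideals meeting $A_e$ trivially, this forces $\Rad(\gamma_A)\subseteq\grRad(A)$, and hence equality.

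The argument is essentially bookkeeping, and I do not anticipate a genuine obstacle; the one point requiring care is to confirm that the radical is truly a \emph{two-sided} graded ideal before invoking the maximality of $\grRad(A)$, which is precisely where radical-symmetry (supplied by $G$-symmetry) is used. It is worth noting that the computation $\gamma_A(1,r)=r$ for $r\in A_e$ shows $A_e\cap\Rad_\mathrm{l}(\gamma_A)=\{0\}$ independently of any symmetry assumption, so the real role of the hypothesis in part (ii) is only to force the two one-sided radicals to coincide.
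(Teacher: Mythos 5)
Your proof is correct, and while part (i) matches the paper, part (ii) takes a genuinely different route for the reverse inclusion. For (i) the paper reduces to a homogeneous $a\in\grRad(A)$ and computes $\ga_A(a,b)=ab_{g^{-1}}\in A_e\cap\grRad(A)=\{0\}$ directly, whereas you verify the containment ideal-by-ideal over the defining family and then sum; both rest on the same observation that $\mathfrak{p}_e$ of a product lands in $A_e\cap J$. For the inclusion $\Rad(\ga_A)\subseteq\grRad(A)$ in (ii), the paper fixes a homogeneous $a\in\Rad(\ga_A)$, forms $J=AaA$, and shows $A_e\cap J=\{0\}$ by expanding an element of $A_e\cap J$ as $\sum b_hac_k$ and killing each term via $b_hac_k=\ga_A(b_h,ac_k)=\rho_h\big(\ga_A(a,c_kb_h)\big)=0$; this is precisely where $G$-symmetry (the automorphisms $\rho_h$) enters in an essential way. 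You instead observe that $\Rad(\ga_A)$ is itself a graded two-sided ideal meeting $A_e$ trivially (since $\ga_A(1,r)=r$ for $r\in A_e$) and invoke the maximality of $\grRad(A)$ among such ideals. This is valid: the facts you need --- that $\Rad_\mathrm{l}(\ga_A)$ is a graded left ideal and $\Rad_\mathrm{r}(\ga_A)$ a graded right ideal, so that the common radical is a graded two-sided ideal once the two sides coincide --- are recorded in the paper just before the proposition, and the computation $\Rad(\ga_A)\cap A_e=\{0\}$ appears verbatim in the proof of the lemma on $\bar A$. What your route buys is both brevity and generality: as you correctly note, it establishes $\Rad(\ga_A)=\grRad(A)$ under the weaker hypothesis that $\ga_A$ is merely radical-symmetric, whereas the paper's argument genuinely consumes the full $G$-symmetry.
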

\begin{proof}
(i) Let $a\in \grRad(A)$. Since $\grRad(A)$ is graded we can assume that $a$ is homogeneous,
i.e. $a\in A_g\cap \grRad(A)$ for some $g\in G$. Let $b\in A$ be arbitrary.
Write $b=\sum_{h\in G} b_h$ where $b_h\in A_h$ for $h\in G$.
Then, by \eqref{eq:gamma_formula}, $\ga_A(a,b)= ab_{-g} \in A_e\cap \grRad(A) =\{0\}$.
Also $\ga_A(b,a)=b_{-g}a\in A_e\cap \grRad(A)=\{0\}$.
Since $b$ was arbitrary, this proves that $a\in \Rad_\mathrm{l}(\ga_A)\cap \Rad_\mathrm{r}(\ga_A)$.

(ii) If $\ga_A$ is $G$-symmetric, then it is radical-symmetric, so by part (i)
it only remains to prove that $\Rad(\ga_A)\subseteq \grRad(A)$. Let $a\in\Rad(\ga_A)$.
Since $\Rad(\ga_A)$ is graded we can assume that $a\in A_g$ for some $g\in G$.
Put $J=AaA$, the ideal in $A$ generated by $a$. Since $a$ is homogeneous,
$J$ is graded. If we show that $A_e\cap J=\{0\}$ then it follows that
$J\subseteq \grRad(A)$, and in particular $a\in \grRad(A)$.
Any element $x$ of $A_e\cap J$ can be written as
\[x=\sum_{\substack{h,k\in G\\hgk=e}} b_hac_k\]
where $b_h,c_h\in A_h$ for all $h\in G$. But then, for any $h,k\in G$
with $hgk=e$ we have,
using \eqref{eq:gamma_adjoint} and that $\ga_A$ is $G$-symmetric,
\[b_hac_k= \ga_A(b_h,ac_k) = \rho_h\big(\ga_A(ac_k,b_h)\big)=\rho_h\big(\ga_A(a,c_kb_h)\big)=0\]
since $a\in\Rad(\ga_A)$, proving that $x=0$.
Since $x$ was arbitrary, $A_e\cap J=\{0\}$.
As shown above, this implies that $a\in \grRad(A)$. But $a$ was arbitrary
so we conclude that $\Rad(\ga_A)\subseteq \grRad(A)$.
\end{proof}

\subsection{The TGWC and TGWA}\label{TGWCTGWAdef}
We shall now recall the definition of a twisted generalized Weyl algebra from \cite{MT_def}, \cite{MT02}.
Fix a positive integer $n$ and set $\nn=\{1,2,\ldots,n\}$.
Let $\K$ be a commutative unital ring,
and let $R$ be a commutative unital $\K$-algebra,
$\si:\Z^n\to\Aut_\K(R)$, $\si:g\mapsto\si_g$, be a group homomorphism
from $\Z^n$ to the group of $\K$-algebra automorphisms of $R$,
$\mu=(\mu_{ij})_{i,j\in\nn}$ be a matrix with
invertible entries from $\K$
and $\vect{t}=(t_1,\ldots,t_n)$ be an $n$-tuple of central elements of $R$.
The quadruple $\TGWdat$ will be referred to as a \emph{TGW datum}.
For convenience we will denote $\si_{e_i}$, where $e_i=(0,\ldots,\overset{i}{1},\ldots,0)\in\Z^n$ by $\si_i$.
Conversely, given $n$ commuting $\K$-algebra automorphisms $\sigma_1,\ldots,\sigma_n$ of $R$ we put
$\si_g=\si_1^{g_1}\circ\cdots\circ\si_n^{g_n}$ for all $g\in\Z^n$ and this defines a group homomorphism $\sigma : \Z^n \to \Aut_{\K}(R)$.

The \emph{twisted generalized Weyl construction} (TGWC) $A'=A'\TGWdat$ obtained from
the TGW datum $\TGWdat$
is the free $R$-ring $F_R(Z)$ on a set $Z=\{X_i, Y_i \mid i\in\nn\}$ of $2n$ symbols
modulo the following relations:
\begin{subequations}\label{eq:rels}
\begin{align}
X_ir&=\si_i(r)X_i,&Y_ir&=\si_i^{-1}(r)Y_i, &\text{for }r\in R, i\in\nn,
\label{eq:rel1}\\
Y_iX_i&=t_i, &X_iY_i&=\si_i(t_i), &\text{for }i\in\nn, \label{eq:rel2}\\
X_iY_j&=\mu_{ij}Y_jX_i, &&&\text{for }i,j\in\nn, i\neq j.
\label{eq:rel3}
\end{align}
\end{subequations}
Due to relation \eqref{eq:rel2}, it is not guaranteed that the
 $\K$-algebra homomorphism
$\iota:R\to A'$ given by composing the natural map $R\to F_R(Z)$
with the canonical projection $F_R(Z)\to A'$ is injective.
However, \emph{throughout this paper we will make the additional
assumption that $\iota$ is injective}. 
In \cite[Corollary 2.17]{FutHart} it was proved that if
$t_1,\ldots,t_n$ are regular elements of $R$, then $\iota$ is injective
if and only if the following consistency conditions hold:
\begin{subequations}\label{eq:tgwa_consistency}
\begin{alignat}{2}
\label{eq:tgwa_consistency1}
\si_i\si_j(t_it_j)&=\mu_{ij}\mu_{ji}\si_i(t_i)\si_j(t_j),& \qquad\forall i,j\in\nn,\, i\neq j,\\
\label{eq:tgwa_consistency2}
t_j\si_i\si_k(t_j)&=\si_i(t_j)\si_k(t_j), &\qquad\forall i,j,k\in\nn,\, i\neq j\neq k\neq i.
\end{alignat}
\end{subequations}
Condition \eqref{eq:tgwa_consistency1} appeared already in \cite{MT_def,MT02}.
Thus, when considering examples where the $t_i$'s are regular in $R$, it is enough
to verify relations \eqref{eq:tgwa_consistency} to know that $\iota:R\to A'$ is injective.

Relations (\ref{eq:rels}) are homogeneous with respect to the $\Z^n$-gradation on
the free $R$-ring $F_R(Z)$ uniquely defined by requiring
\[\deg X_i=e_i, \quad\deg Y_i=-e_i,\quad \deg r=0,\quad\text{for }i\in\nn,
r\in R,\]
where $e_i=(\overset{1}{0},\ldots,0,\overset{i}{1},0,\ldots,\overset{n}{0})$.
Thus this $\Z^n$-gradation on $F_R(Z)$ descends to a $\Z^n$-gradation
$\{A'_g\}_{g\in\Z^n}$ on the quotient $A'$.
It is easy to see that $A'_0=\iota(R)$. Thus, since we always assume that
$\iota$ is injective, we may identify $R$ with $A'_0$ via the isomorphism $\iota$.

The \emph{twisted generalized Weyl algebra} (TGWA)
$A=A\TGWdat$ of \emph{rank} $n$ is defined to be $A'/I$, where $I=\grRad(A')$
is the gradical of $A'$ (i.e. the sum of all graded two-sided ideals of $A'$ intersecting $R$
trivially). Since $I$ is graded, $A$ inherits a $\Z^n$-gradation
$\{A_g\}_{g\in\Z^n}$ from $A'$.
Note that $R$ is isomorphic to its image in the quotient $A=A'/I$. We shall therefore identify
these algebras and denote them with the same letter $R$.

By a \emph{monic monomial} in a TGWC $A'$ or TGWA $A$
we mean a product of the generators $X_i$, $Y_i$, for $i\in \nn$.
A product of an element of $R$ and a monic monimial is referred to as a \emph{monomial}.
A monic monomial is called \emph{reduced} if it has the form
\begin{equation}
Y_{i_1}\cdots Y_{i_k} X_{j_1}\cdots X_{j_l},\quad
 \text{where $i_r,j_s\in\nn$ and $\{i_1,\ldots,i_k\}\cap \{j_1,\ldots,j_l\}=\emptyset$.}
\end{equation}
The following lemma is useful and straightforward to prove.
\begin{lemma}\label{LEMMATWOSIX}
$A'$ (respectively $A$) is generated as a left and as a right $R$-module by the reduced monomials in $A'$
 (respectively $A$).
\end{lemma}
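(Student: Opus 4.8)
The plan is to prove the statement by first reducing to monic monomials and then \emph{straightening} each monic monomial into reduced form using only the defining relations \eqref{eq:rel1}--\eqref{eq:rel3}. By construction $A'$ (and hence $A$) is spanned as a $\K$-module by products $r_0 z_1 r_1 \cdots z_k r_k$ with $r_i \in R$ and each $z_i \in \{X_j, Y_j\}$. Relation \eqref{eq:rel1}, together with its consequence for inverse automorphisms, lets me push every factor from $R$ to the far left past each generator at the cost of applying some $\si_g^{\pm 1}$; thus each such product equals $r \cdot (z_1\cdots z_k)$ for a single $r \in R$, and symmetrically it equals $(z_1\cdots z_k)\cdot r'$. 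Hence it suffices to show that every \emph{monic} monomial $z_1 \cdots z_k$ is a left (respectively right) $R$-linear combination of reduced monomials, which I argue by strong induction on the length $k$ (the cases $k=0,1$ being trivially reduced).

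For the inductive step I run a rewriting procedure governed by the lexicographic measure (length, number of inversions), where an \emph{inversion} is a pair of positions $p<q$ with $z_p$ an $X$ and $z_q$ a $Y$. As long as some $X_i$ sits immediately to the left of some $Y_j$, I rewrite that adjacent pair: if $i=j$ I cancel it by \eqref{eq:rel2} (replacing $X_iY_i$ by the central element $\si_i(t_i)$, which I then move out via \eqref{eq:rel1}), strictly decreasing the length; if $i\neq j$ I swap it by \eqref{eq:rel3}, keeping the length fixed and strictly decreasing the number of inversions. If a cancellation ever occurs I obtain a strictly shorter monic monomial and finish by the induction hypothesis. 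Otherwise the procedure uses only swaps, terminates, and leaves a word with no factor ``$X$ immediately followed by $Y$'', which is exactly a \emph{sorted} word $Y_{i_1}\cdots Y_{i_k}X_{j_1}\cdots X_{j_l}$ in which all $Y$'s precede all $X$'s.

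It then remains to turn a sorted word into a genuinely reduced one, i.e. to arrange that $\{i_1,\ldots,i_k\}\cap\{j_1,\ldots,j_l\}=\emptyset$. Here I expect the main obstacle to lie, precisely because a general TGWA has \emph{no} relations among the $X$'s or among the $Y$'s, so one cannot simply reorder same-type generators to line up a matching pair. My maneuver uses only the mixed relation \eqref{eq:rel3} (recall the $\mu_{ij}$ are invertible, so $Y_jX_i=\mu_{ij}^{-1}X_iY_j$ as well): if the sorted word has a common index, I let $X_{j_r}$ be the first $X$ whose index occurs in the $Y$-block, commute the earlier $X$'s $X_{j_1},\ldots,X_{j_{r-1}}$ (whose indices are absent from the $Y$-block) all the way to the front past the whole $Y$-block, and then slide $X_{j_r}$ leftward through the $Y$-block until it stands next to the last $Y_{j_r}$; every swap along the way is legitimate because the intervening indices differ from $j_r$. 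At that point \eqref{eq:rel2} cancels $Y_{j_r}X_{j_r}=t_{j_r}$, producing a monic monomial of length $k-2$ (after absorbing the scalar), so the induction hypothesis again applies.

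The delicate point to verify carefully is exactly this sorted case: one must check that the chosen $X_{j_r}$ really does meet a matching $Y$ before being blocked, which is what the choice of $j_r$ occurring in the $Y$-block, combined with sliding only past $Y$'s of differing index, guarantees. The right-module statement follows by the mirror-image argument, pushing all scalars to the right and cancelling via $Y_iX_i=t_i$ in the symmetric fashion.
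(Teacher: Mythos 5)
Your proof is correct and complete. The paper itself offers no argument for this lemma (it is dismissed as ``straightforward to prove''), so there is nothing to compare against; your write-up supplies exactly the straightening argument one would expect, and — importantly — you correctly identified and resolved the one genuinely non-obvious point, namely that a sorted word $Y_{i_1}\cdots Y_{i_k}X_{j_1}\cdots X_{j_l}$ with a common index cannot be reduced by reordering same-type generators (no such relations exist in a general TGWC), but can be reduced by first clearing out the $X$'s whose indices are absent from the $Y$-block via \eqref{eq:rel3} and then sliding the offending $X_{j_r}$ through the tail of the $Y$-block to meet its partner and cancel via \eqref{eq:rel2}. The termination measure for the swapping phase and the passage from $A'$ to the quotient $A$ are both handled correctly.
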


\section{Essential subalgebras of TGWAs}\label{IdealInterSectionsForTGWA}

The \emph{ideal intersection property} has been studied in e.g. \cite{OL1}.
In this paper, subalgebras having this property are said to be \emph{essential}
and they are defined as follows.

\begin{definition}\label{def:essential_subalgebra}
A subalgebra $S'$ of a $\K$-algebra $S$ is said to be an \emph{essential subalgebra of $S$}, if $S' \cap I \neq \{0\}$ for each non-zero ideal $I$ of $S$.
\end{definition}

One useful fact, is that injectivity of a $\K$-algebra homomorphism $\varphi : S \to T$
follows from the injectivity of $\varphi\lvert_{S'} : S' \to T$
(the restriction of $\varphi$ to $S'$) if $S'$ is an essential subalgebra of $S$.

Let $A$ be a group graded $\K$-algebra with commutative neutral component $A_e$.
Consider the following question:
Is the centralizer of $A_e$ in $A$ an essential subalgebra of $A$? The answer is known to be affirmative
if $A$ is strongly graded \cite{oin10} or if $A$ is a \emph{crystalline graded ring} (CGR) \cite{oin09},
or more generally for any ring satisfying the condition in \cite[Theorem 3]{OL1}.
We shall now prove that the answer is affirmative also for TGWAs. This is interesting
since there are TGWAs which, with their natural $\Z^n$-gradation, are neither CGRs nor strongly graded.
We begin with some useful preliminary resuts.

The following interesting and useful commutation relation (proved under
additional assumptions in \cite{FutHart}, \cite{H09})
shows that the gradation form $\ga_{A'}$ on an arbitrary TGWC $A'$ is always $\Z^n$-symmetric (in the sense of Definition \ref{FormProperties}).
\begin{lemma}\label{lem:nondegtgwaflip}
Let $A'=A'\TGWdat$ be a TGWC. For any $g\in \Z^n$, we have
\begin{equation}
ab=\si_g(ba)
\end{equation}
for all $a\in A'_g$ and all $b\in A'_{-g}$. Hence the gradation form $\ga_{A'}$ on $A'$ is $\Z^n$-symmetric.
\end{lemma}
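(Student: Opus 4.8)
The plan is to reduce the statement to the commutation identity $ab=\si_g(ba)$ and then prove the latter by exploiting that $\si$ is a group homomorphism. First observe that for $a\in A'_g$ and $b\in A'_{-g}$ both products $ab$ and $ba$ lie in $A'_0=R$, so $\ga_{A'}(a,b)=\mathfrak{p}_0(ab)=ab$ and $\ga_{A'}(b,a)=ba$. Taking $\rho_g:=\si_g$, which is a genuine action of $\Z^n$ by $\K$-algebra automorphisms of $A'_0=R$, the asserted $\Z^n$-symmetry $\ga_{A'}(a,b)=\rho_g\big(\ga_{A'}(b,a)\big)$ becomes literally the identity $ab=\si_g(ba)$. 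Hence it suffices to establish this commutation relation for all $g\in\Z^n$, $a\in A'_g$ and $b\in A'_{-g}$.

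The engine of the proof is a multiplicativity property. For each $g$ set $V_g:=\{a\in A'_g\mid ab=\si_g(ba)\text{ for all }b\in A'_{-g}\}$; since the defining condition is $\K$-linear in $a$, each $V_g$ is a $\K$-submodule of $A'_g$. I claim that $a_1\in V_{g_1}$ and $a_2\in V_{g_2}$ imply $a_1a_2\in V_{g_1+g_2}$. Indeed, for $b\in A'_{-(g_1+g_2)}$ we have $a_2b\in A'_{-g_1}$ and $ba_1\in A'_{-g_2}$, so using the defining relations of $V_{g_1}$ and then of $V_{g_2}$,
\[
(a_1a_2)b=a_1(a_2b)=\si_{g_1}\big((a_2b)a_1\big)=\si_{g_1}\big(a_2(ba_1)\big)
=\si_{g_1}\si_{g_2}\big((ba_1)a_2\big)=\si_{g_1+g_2}\big(b(a_1a_2)\big),
\]
where the final equality uses that $\si$ is a homomorphism. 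Thus the homogeneous elements satisfying the relation are closed under products, and within each degree they form a $\K$-submodule.

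It remains to verify the relation on a generating set. Elements of $R=A'_0$ lie in $V_0$ because $R$ is commutative and $\si_0=\identity$. For the generators $X_i,Y_i$ I use Lemma \ref{LEMMATWOSIX}, which says $A'$ is generated as a left $R$-module by reduced monomials, together with a degree count: the only reduced monomial of degree $e_i$ (resp. $-e_i$) is $X_i$ (resp. $Y_i$), since disjointness of the $Y$- and $X$-index sets forces every coordinate other than $i$ to cancel and the $i$-th to contribute exactly $+1$ (resp. $-1$). Hence $A'_{e_i}=RX_i$ and $A'_{-e_i}=RY_i$. Then $X_i\in V_{e_i}$ is immediate: for $b=rY_i$ one has $X_i(rY_i)=\si_i(r)X_iY_i=\si_i(r)\si_i(t_i)=\si_i(rt_i)=\si_i\big((rY_i)X_i\big)$ by \eqref{eq:rel1} and \eqref{eq:rel2}, and $Y_i\in V_{-e_i}$ is symmetric. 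Since $A'$ is $\K$-spanned by monomials $r_0z_1r_1\cdots z_mr_m$ with $z_j\in\{X_i,Y_i\}$ and $r_j\in R$, each such monomial is a product of homogeneous elements lying in the various $V_h$, hence lies in the appropriate $V_g$ by multiplicativity; as each $V_g$ is a submodule, $V_g=A'_g$ for every $g$. This proves $ab=\si_g(ba)$ for all homogeneous $a,b$ of opposite degree, and thus the $\Z^n$-symmetry of $\ga_{A'}$.

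The part that initially looks like the main obstacle is handling an arbitrary second argument $b$, since $A'_{-g}$ may contain long monomials. This is exactly what the multiplicativity lemma circumvents: it only ever builds up the \emph{first} argument while $b$ ranges over all of $A'_{-g}$, so the problem collapses to the case where the first argument is a single generator. The second apparent difficulty, a direct computation with those generators, dissolves once one observes that $A'_{\pm e_i}$ is free of rank one over $R$ on $X_i^{\pm1}$; the only genuine input is that $\si$ is a homomorphism, which is what makes the twists $\si_{g_1}$ and $\si_{g_2}$ compose to $\si_{g_1+g_2}$ in the multiplicativity step.
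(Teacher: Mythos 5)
Your proof is correct and follows essentially the same route as the paper's: verify the identity on the generators $X_i,Y_i$ using that $A'_{\pm e_i}=RX_i$ resp.\ $RY_i$ (Lemma \ref{LEMMATWOSIX}), absorb coefficients from $R$, and extend to arbitrary monomials by composing the twists, which is exactly where $\si_{g_1}\circ\si_{g_2}=\si_{g_1+g_2}$ enters. Your multiplicativity statement for the submodules $V_g$ is just a cleaner, explicit formalization of the paper's ``by iterating this argument.''
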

\begin{proof}
Let $g\in \Z^n$ be arbitrary. It suffices to prove the statement when $a \in A'_g$ is a monomial.
Also, we may assume that $a$ is monic because if the statement holds for a monic monomial then $rab=r\si_g(ba)=\si_g(\si_{-g}(r)ba)=\si_g(bra)$ for any $r\in R$, any $b\in A'_{-g}$ and monic monomial $a$.
If we first suppose that $a=X_i$, then $b$ has degree $-e_i$ and hence $b=rY_i$ for some $r\in R$,
by Lemma \ref{LEMMATWOSIX}.
We then get $ab=X_i r Y_i =\si_i(r)\si_i(t_i) = \si_i(rY_iX_i)=\si_i(ba)$.
The case $a=Y_i$ is treated analogously.
By iterating this argument we find that the statement holds for an arbitrary monic monomial $a$.
\end{proof}

By Proposition \ref{prop:gradation_things} we obtain the following important result.
\begin{corollary}\label{cor:gamma_nondeg}
The gradical $I=\grRad(A')$ of a TGWC $A'$ is equal to the radical of the gradation form $\ga_{A'}$ on $A'$,
and the gradation form $\ga_A$ on the corresponding TGWA $A=A'/\grRad(A')$ is non-degenerate.
\end{corollary}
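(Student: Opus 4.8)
The plan is to assemble the pieces already in place, since this corollary is exactly the point where the $\Z^n$-symmetry established in Lemma~\ref{lem:nondegtgwaflip} is fed into the abstract machinery of Proposition~\ref{prop:gradation_things}. The single nontrivial input is that $\ga_{A'}$ is $\Z^n$-symmetric, which Lemma~\ref{lem:nondegtgwaflip} already provides; by the Remark following Definition~\ref{FormProperties} this in particular makes $\ga_{A'}$ radical-symmetric, so its left and right radicals agree and $\Rad(\ga_{A'})$ is well defined.

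For the first assertion I would apply Proposition~\ref{prop:gradation_things}(ii) with $G=\Z^n$ and the action $\si$. Since $\ga_{A'}$ is $\Z^n$-symmetric, that proposition gives $\Rad(\ga_{A'})=\grRad(A')=I$ at once, so the gradical coincides with the radical of the gradation form.

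For the second assertion I would observe that the identification just obtained turns the definition of the TGWA into $A=A'/I=A'/\Rad(\ga_{A'})$, which is precisely the quotient denoted $\bar A$ (with $A'$ in place of $A$) in the discussion preceding the gradical subsection. Since $\ga_{A'}$ is radical-symmetric, the Lemma asserting that $\ga_{\bar A}$ is non-degenerate applies verbatim, yielding that $\ga_A$ is non-degenerate. I expect no genuine obstacle here: all the substance lives in Lemma~\ref{lem:nondegtgwaflip}, and what remains is only the bookkeeping of matching $A=A'/\grRad(A')$ against the abstract $\bar A$ construction and checking that the hypotheses carry over. The one point worth stating explicitly is that radical-symmetry of $\ga_{A'}$ is what legitimizes writing $\Rad(\ga_{A'})$ without a left/right qualifier and what lets the earlier Lemma apply; both follow from $\Z^n$-symmetry, so the chain of implications closes cleanly.
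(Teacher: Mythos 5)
Your proposal is correct and follows exactly the paper's intended argument: the paper derives this corollary directly from Proposition~\ref{prop:gradation_things}(ii) applied to the $\Z^n$-symmetric form $\ga_{A'}$ (Lemma~\ref{lem:nondegtgwaflip}), and then identifies $A=A'/\grRad(A')$ with the quotient $\bar A'$ so that the earlier lemma on non-degeneracy of $\ga_{\bar A}$ applies. No gaps.
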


\begin{remark}\label{RemarkNonDegCohRowOinertLundstromForTGWAs}
By Remark \ref{RemarkNonDegCohRowOinertLundstrom} this shows that
for a general TGWA $A$, $\gamma_A$ is non-degenerate in the sense of Cohen and Rowen \cite{CohRow}
and furthermore, the grading on $A$ is left (and right)
non-degenerate in the sense of \cite[Definition 2]{OL1}.
\end{remark}

\begin{remark}
Let $A'=A'\TGWdat$ be a TGWC. If we assume that $\mu$ is symmetric, then we can define
an involution $* : A' \to A'$.
Recall the left and right Shapovalov-type forms from \cite{MPT},
 $F^\mathrm{l}:A'\times A'\to R$ and $F^\mathrm{r}:A'\times A'\to R$
 defined by $F^\mathrm{l}(a,b)=\mathfrak{p}_0(a^\ast b)$
 and $F^\mathrm{r}(a,b)=\mathfrak{p}_0(ab^\ast)$ respectively.
In \cite{MPT} it was shown that, if $R$ is a domain, then the
radicals of $F^\mathrm{l}$ and $F^\mathrm{r}$, i.e. 
$\Rad(F^\mathrm{r}) := \{a\in A' \,\mid \, F^r(a,b)=0 \text{ for any } b\in A\}$
respectively 
$\Rad(F^\mathrm{l}) := \{a\in A' \,\mid \, F^l(b,a)=0 \text{ for any } b\in A\}$, coincide with eachother
and with the ideal $I$.
Corollary \ref{cor:gamma_nondeg} generalizes
this relation to the case of arbitrary TGWAs and arbitrary $\mu$.
\end{remark}

\begin{theorem}\label{IdealIntersection}
Let $A=A\TGWdat$ be a TGWA. Then $C_A(R)$ is an essential subalgebra of $A$.
That is, each non-zero ideal of $A$
intersects the centralizer of $R$ in $A$ non-trivially.
\end{theorem}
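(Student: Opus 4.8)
The plan is to deduce the statement from the general ideal-intersection result \cite[Theorem 3]{OL1}, whose hypothesis is exactly that the grading be left and right non-degenerate in the sense of \cite[Definition 2]{OL1}. Since essentially all of the substantive work has already been carried out in establishing the non-degeneracy of $\ga_A$, the argument reduces to matching up hypotheses. First I would record that $A_0=R$ is commutative, which is built into the definition of a TGWA. Next I would invoke Corollary \ref{cor:gamma_nondeg} together with Remark \ref{RemarkNonDegCohRowOinertLundstromForTGWAs}: the gradation form $\ga_A$ on $A=A'/\grRad(A')$ is non-degenerate, and hence the $\Z^n$-grading on $A$ is left and right non-degenerate in the sense of \cite[Definition 2]{OL1}. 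With $A_0=R$ commutative and the grading non-degenerate, \cite[Theorem 3]{OL1} applies directly and yields that $C_A(R)=C_A(A_0)$ is an essential subalgebra of $A$, which is the assertion.

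For completeness I would indicate the mechanism underlying \cite[Theorem 3]{OL1}, since it shows precisely why non-degeneracy is the relevant input. Given a non-zero ideal $I$, choose $0\neq x\in I$ with $|\Supp(x)|$ minimal. Writing $x=\sum_g x_g$, the relations \eqref{eq:rel1} give $rx_g=x_g\si_{-g}(r)$ for $r\in R$ and $x_g\in A_g$. If $0\notin\Supp(x)$, pick $g_0\in\Supp(x)$; by non-degeneracy $x_{g_0}\notin\Rad(\ga_A)=\{0\}$, so there exists $y\in A_{-g_0}$ with $\ga_A(y,x_{g_0})=yx_{g_0}\neq 0$. Then $yx\in I$ has non-zero component in degree $0$ and support of cardinality at most $|\Supp(x)|$, so by minimality we may replace $x$ by $yx$ and assume $0\in\Supp(x)$. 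For every $r\in R$,
\[
rx-xr=\sum_{g\in\Supp(x)} x_g\big(\si_{-g}(r)-r\big),
\]
whose degree-$0$ term vanishes; hence $rx-xr\in I$ has strictly smaller support and so is zero by minimality. Therefore $x\in C_A(R)$ and $I\cap C_A(R)\neq\{0\}$.

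The main obstacle is not located in this concluding argument but in the non-degeneracy of $\ga_A$ itself: it is exactly what guarantees that a homogeneous ``partner'' $y$ can always be found to push a non-zero component into degree $0$, and without it the minimal-support reduction breaks down. That obstacle has, however, already been overcome via the commutation relation of Lemma \ref{lem:nondegtgwaflip} (the $\Z^n$-symmetry of $\ga_{A'}$) and the consequent Corollary \ref{cor:gamma_nondeg}, so in the present theorem the only remaining task is the bookkeeping of verifying the hypotheses of \cite[Theorem 3]{OL1}. I would present the short citation-based proof as the official argument and optionally append the minimal-support mechanism as explanatory commentary.
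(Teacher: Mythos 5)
Your proposal is correct and follows essentially the same route as the paper: the official proof there likewise consists of citing Corollary \ref{cor:gamma_nondeg} together with Remark \ref{RemarkNonDegCohRowOinertLundstromForTGWAs} and \cite[Theorem 3]{OL1}, and then spelling out the same minimal-support argument for the reader's convenience. The only cosmetic difference is that you push a non-zero homogeneous component into degree zero by a one-sided multiplication justified directly by non-degeneracy of $\ga_A$, whereas the paper multiplies on both sides and argues via the gradical; these are interchangeable here.
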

\begin{proof}
This follows from Corollary \ref{cor:gamma_nondeg} (Remark \ref{RemarkNonDegCohRowOinertLundstromForTGWAs})
 and \cite[Theorem 3]{OL1}. For the convenience of the reader
we give a proof here.
Let $J$ be a non-zero ideal of $A$. Among all non-zero elements of $J$,
choose one, $a=\sum_{g\in\Z^n} a_g$, where $a_g\in A_g$ for $g\in G$,
such that its support is minimal, i.e. such that $|\Supp(a)|$
is as small as possible.
Pick some $h\in\Z^n$ such
that $a_h\neq 0$. Then there are homogeneous elements $b,c\in A$ such that
$\deg(b)+\deg(c)=-h$ and $ba_hc\neq 0$. Indeed, otherwise $a_h$
would generate a graded ideal in $A$ with zero intersection with $R$.
Put $a'=bac$. Then $a'$ is a non-zero element of $J$
such that $|\Supp(a')|=|\Supp(a)|$, but in addition $a'_0 = ba_hc\neq 0$.
We claim that $a'\in C_A(R)$. Otherwise there is an $r\in R$
with $a''=[a',r]\neq 0$. But then $a''$ is a non-zero element of
$J$
such that $|\Supp(a'')| < |\Supp(a)|$ because
$a''_0=[a'_0,r]=0$ since $a'_0\in R$ and $a'_0\neq 0$.
This contradicts the minimality
of $|\Supp(a)|$. Hence $a'\in C_A(R)\cap J$.
This proves that $C_A(R)\cap J\neq \{0\}$.
\end{proof}

\begin{remark}
Note that the statement in the above proof is concerned with \emph{ideals}, not only graded ideals.
It is immediate from the definition that 
any non-zero graded ideal of a TGWA intersects $R$ non-trivially.
\end{remark}

The following example shows two things:
\begin{enumerate}[{\rm (i)}]
	\item The corresponding statement is not true for all GWAs.
	\item Not all GWAs are isomorphic to a CGR with neutral component $R$.
\end{enumerate}

\begin{exmp}
Put $R=\C[H]/(H^2)$, and let $t=H+(H)$. Let $\varepsilon\in \C$ be a non-root of unity
and define $\si\in\Aut_\C(R)$ by $\si(t)=\varepsilon t$. Consider $A=R(\si,t)$, the
corresponding generalized Weyl algebra of rank one (equivalently, $A$
is a TGWC of rank one). We claim that $C_A(R)=R$. Suppose that $a\in C_A(R)$.
Write $a=\sum_{k\in\Z} r_k Z^{(k)}$ where $Z^{(k)}$ equals $X^k$ if $k\ge 0$
and $Y^{-k}$ otherwise.
Then $0=[a,t]=\sum_{k\in\Z} r_k(\varepsilon^k-1)Z^{(k)}$. Since $A$ is $\Z$-graded
we get $r_k(\varepsilon^k-1)Z^{(k)}=0$ for each $k\in\Z$. It is well-known that
each $Z^{(k)}$ generates a free left $R$-module. Thus, since $\varepsilon$ is
not a root of unity, $r_k=0$ for all $k\in\Z\setminus\{0\}$. Hence $C_A(R)=R$.
However, it is easy to see that $X^2$ generates an ideal in $A$ which has
zero intersection with $R$.
\end{exmp}

\section{Characterization of regularly graded TGWAs}

Crystalline graded algebras were introduced by Nauwelaerts and Van Oystaeyen in \cite{CGR}
as a natural generalization of e.g. GWAs and $G$-crossed products.
Not all TGWAs fit into this class. We shall now introduce
the notion of \emph{regularly graded algebras}, of which crystalline graded algebras are a special case.

\begin{definition}
A $G$-graded $\K$-algebra is said to be \emph{regularly graded} if each homogeneous
component contains a regular element. In this case the gradation is said to be \emph{regular}.
\end{definition}

Note that the gradation form of a regularly graded algebra is necessarily non-degenerate.
Moreover, a regular gradation is necessarily \emph{faithful} (called \emph{component regular} in \cite{Passman}) in the sense of Cohen and Montgomery \cite{CohMont}.

\begin{remark}
As already mentioned, $G$-crossed products are examples of crystalline graded algebras, and hence they are regularly graded.
Moreover, each $G$-crossed product is strongly $G$-graded. However, not every strongly graded algebra is
regularly graded. To see this, let $A:=M_3(\C)$ be the algebra of $3 \times 3$-matrices over $\C$. It is possible to
define a strong $\Z_2$-gradation on $A=A_{\bar{0}} \oplus A_{\bar{1}}$ (see e.g. \cite[Example 6.11]{Oinert}) and one can show that $A_{\bar{1}}$
contains no regular element. Therefore $A$ is strongly $\Z_2$-graded (hence faithfully graded and $\gamma_A$ is non-degenerate) but not regularly graded.
\end{remark}

The following theorem gives a characterization of those TGWAs whose $\Z^n$-gradation
is regular.

\begin{theorem}\label{thm:regularly_graded_char}
Let $A=A\TGWdat$ be a TGWA. The following assertions are equivalent:
\begin{enumerate}[{\rm (i)}]
	\item \label{it:regularly_graded} $A$ is regularly graded;
  \item \label{it:ti_regular} For each $i \in \nn$, $t_i \in R_{\reg}$;
	\item \label{it:torsionfree} Each monic monomial in $A$ is non-zero
    and generates a free left (and right) $R$-module of rank one;
  \item \label{it:regular_property} If $a\in A$ is a homogeneous element such that $bac=0$
       for some monic monomials $b,c\in A$ with
       $\deg(a)+\deg(b)+\deg(c)=0$, then $a=0$.
\end{enumerate}
\end{theorem}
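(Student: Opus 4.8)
The plan is to prove the four assertions equivalent through the cycle (i) $\Rightarrow$ (ii) $\Rightarrow$ (iii) $\Rightarrow$ (i), and then to insert (iv) by the two short implications (iii) $\Rightarrow$ (iv) and (iv) $\Rightarrow$ (ii). Throughout I would exploit three facts already available: that $\gamma_A$ is non-degenerate (Corollary \ref{cor:gamma_nondeg}); that by Lemma \ref{LEMMATWOSIX} the only reduced monomials of degree $e_i$ and $-e_i$ are $X_i$ and $Y_i$, so that $A_{e_i}=RX_i$ and $A_{-e_i}=RY_i$; and the commutation rule $ar=\sigma_g(r)a$ for homogeneous $a\in A_g$ and $r\in R$, which follows from \eqref{eq:rel1}. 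The central combinatorial device is, for a monic monomial $m$ of degree $g$, its \emph{reverse} $\bar m$ (reverse the word and interchange every $X_i$ with $Y_i$), which has degree $-g$; iterating \eqref{eq:rel2} collapses $\bar m m$ and $m\bar m$ to elements of $R$, each a product of factors of the form $\sigma_h(t_i)$.

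The equivalence (ii) $\Leftrightarrow$ (iii) is then quick. For (iii) $\Rightarrow$ (ii) I would simply apply (iii) to the degree-zero monic monomial $Y_iX_i=t_i$: freeness of the rank-one module $Rt_i$ says exactly that $t_i$ is regular. For (ii) $\Rightarrow$ (iii), given a monic monomial $m$ of degree $g$ and $rm=0$, I would multiply on the left by $\bar m$ to obtain $\sigma_{-g}(r)\cdot(\bar m m)=0$ in $R$; since each $t_i$ is regular and the $\sigma_j$ preserve regularity, $\bar m m$ is regular, forcing $r=0$ (and $m\neq 0$). The right-module statement is symmetric using $m\bar m$.

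Next, (ii) $\Rightarrow$ (iv) and (iii) $\Rightarrow$ (i) both run on the same idea combined with non-degeneracy. For (iii) $\Rightarrow$ (i) I first note (iii) $\Rightarrow$ (ii), so each $\bar m m$ and $m\bar m$ is a regular element of $R$; then for homogeneous $a\in A_h$ with $am=0$ I get $\sigma_h(m\bar m)\,a=0$, and if $a\neq 0$ non-degeneracy supplies $e\in A_{-h}$ with $ae\neq 0$, whence regularity of $\sigma_h(m\bar m)$ in $R$ contradicts $\sigma_h(m\bar m)(ae)=0$; this shows every monic monomial is regular, and since each $A_g$ contains one, $A$ is regularly graded. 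The same cancellation gives (ii) $\Rightarrow$ (iv). Finally (iii) $\Rightarrow$ (iv) is immediate once monic monomials are known to be regular (via (iii) $\Rightarrow$ (i)): from $bac=0$ cancel the regular $b$ on the left and the regular $c$ on the right. And (iv) $\Rightarrow$ (ii) follows by applying (iv) to $a=s\in A_0$, $b=X_i$, $c=Y_i$: if $t_is=0$ then $X_isY_i=\sigma_i(st_i)=0$, so (iv) gives $s=0$.

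I expect the genuine obstacle to be (i) $\Rightarrow$ (ii): regular gradation looks a priori much weaker than regularity of the $t_i$, and the naive attempt to cancel a single regular element of $A_{e_i}$ only reproduces the hypothesis. The trick I would use is to employ regular elements in both $A_{e_i}$ and $A_{-e_i}$. Writing a regular $u=rX_i\in A_{e_i}$ and a regular $v=r'Y_i\in A_{-e_i}$, suppose $t_is=0$. Then
\[
u(sY_i)=rX_i\,s\,Y_i=r\,\sigma_i(s)\,X_iY_i=r\,\sigma_i(s)\sigma_i(t_i)=r\,\sigma_i(st_i)=0,
\]
so regularity of $u$ forces $sY_i=0$; and then, using $r'Y_i=Y_i\sigma_i(r')$,
\[
sv=s\,r'Y_i=(sY_i)\,\sigma_i(r')=0,
\]
so regularity of $v$ forces $s=0$, proving $t_i$ regular. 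The delicate point is precisely that one regular component does not suffice: one must play the positive and negative components against each other and push $s$ through $Y_i$ by the defining relation.
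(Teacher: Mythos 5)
Your proposal is correct and essentially reproduces the paper's proof: the same ingredients carry all the weight in both arguments (non-degeneracy of $\gamma_A$ from Corollary \ref{cor:gamma_nondeg}, the fact that $m\bar m$ and $\bar m m$ collapse to products of elements $\sigma_h(t_i)$ and hence are regular in $R$, and the description $A_{e_i}=RX_i$, $A_{-e_i}=RY_i$), with only the arrangement of the implications differing. The sole micro-variations are that the paper's (i)$\Rightarrow$(ii) first deduces that $X_i$ and $Y_i$ are themselves regular in $A$ from the regular elements of $A_{\pm e_i}$ (so that $t_i=Y_iX_i$ is a product of regular elements) rather than manipulating $t_is=0$ directly, and its (iii)$\Rightarrow$(iv) uses Lemma \ref{lem:nondegtgwaflip} together with non-degeneracy instead of routing through (iii)$\Rightarrow$(i); both of your alternatives are equally valid.
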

\begin{proof}
\eqref{it:regularly_graded} $\Rightarrow$ \eqref{it:ti_regular}:
If $A$ is regularly graded, then for each $i\in\nn$, $A_{e_i}$ contains
a regular element. Since $A_{e_i}=X_iR$ it means there exists an $r\in R$
such that $X_ir=\si_i(r)X_i$ is regular in $A$. But then $X_i$ must be regular in $A$
as well. Analogously $Y_i$ must be regular in $A$. Thus $Y_iX_i=t_i$ is regular
in $R$.

\eqref{it:ti_regular} $\Rightarrow$ \eqref{it:regularly_graded}:
Assume that $t_i$ is regular in $R$ for all $i\in\nn$.
Let $g\in\Z^n$ be arbitrary and let $a=Z_1^{(g_1)}\cdots Z_n^{(g_n)}$,
where $Z_i^{(k)}=X_i^k$ if $k\ge 0$ and $Z_i^{(-k)}=Y_i^{-k}$ if $k<0$.
Put $a^*=Z_n^{(-g_n)}Z_{n-1}^{(-g_{n-1})}\cdots Z_1^{(-g_1)}$.
Then $a a^*$ and $a^*a$ are products of elements of the form $\si_h(t_i)$
($h\in \Z^n, i\in\nn$), hence they are both regular in $R$.
If $b\in A$ with $ab=0$, then $a^*ab=0$ so $a^*ab_h=0$ for each
homogeneous component $b_h$ of $b$. But if $b_h\neq 0$ for some $h\in \Z^n$
then by Corollary \ref{cor:gamma_nondeg} there is some $c\in A_{-h}$
such that $b_hc\neq 0$. Then $a^*ab_hc=0$ and $b_hc\in R\setminus\{0\}$
contradict the fact that $a^*a$ is regular in $R$. So $b=0$. Similarly $ba=0$
for some $b\in A$ implies $b=0$. Thus $a$ is regular in $A$.

\eqref{it:ti_regular} $\Rightarrow$ \eqref{it:torsionfree}:
Let $a \in A$ be a monic
monomial and let $r\in R\setminus\{0\}$.
As in the previous step, $aa^*$ is a regular element of $R$, hence $raa^* \neq 0$ and thus $ra\neq 0$.

\eqref{it:torsionfree} $\Rightarrow$ \eqref{it:regular_property}:
By Lemma \ref{lem:nondegtgwaflip}, we have $bac=\si_h(acb)$ where
$h=\deg(b)$. Thus $acb=\si_h^{-1}(bac)=0$. 
If $a\neq 0$, then by Corollary \ref{cor:gamma_nondeg}, $da\in R\setminus\{0\}$
for some $d\in A$ with $\deg(d)=-\deg(a)$. But by
\eqref{it:torsionfree}, $da\cdot cb=0$ implies that $da=0$.
This contradiction shows that $a=0$.

\eqref{it:regular_property} $\Rightarrow$ \eqref{it:ti_regular}:
Suppose that $rt_i=0$ for some $r\in R, i\in\nn$. Then $0=rt_i=rY_iX_i=Y_i\si_i(r)X_i$.
By \eqref{it:regular_property} we get $\si_i(r)=0$ and hence $r=0$. This
proves that $t_i$ is regular in $R$ for each $i\in\nn$.
\end{proof}

\begin{remark}
Property \eqref{it:regular_property} in the above theorem is very
convenient for proving relations in a regularly graded TGWA.
\end{remark}

\section{Centralizers of $R$ in TGWAs}
\subsection{Results for general TGWAs}
Given a TGW datum $\TGWdat$, we shall denote the kernel of the group homomorphism $\si:\Z^n\to\Aut_\K(R)$ by $K$.
Also we use the notation 
$Z_i^{(k)}=\begin{cases} X_i^k,&k\ge 0,\\ Y_i^{-k},&k<0\end{cases}$
where $i\in \nn$ and $k\in\Z$.

\begin{theorem}\label{CommutantDescription}
Let $A=A\TGWdat$ be a TGWA and let $A_K$ be the following subalgebra of $A$:
\[A_K:=\bigoplus_{g\in K} A_g.\]
Then $A_K$ is contained in $C_A(R)$, the centralizer of $R$ in $A$.
Moreover, if $R$ is a domain or if $R$ is $\Z^n$-simple, then $A_K=C_A(R)$.
\end{theorem}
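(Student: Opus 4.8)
The plan is to prove the two inclusions separately, the first unconditionally and the second under either hypothesis on $R$. The workhorse throughout is the elementary commutation relation $ar=\si_g(r)a$, valid for every homogeneous $a\in A_g$ and every $r\in R$; this follows by applying relations \eqref{eq:rel1} to each reduced monomial and invoking Lemma \ref{LEMMATWOSIX} together with commutativity of $R$. For the inclusion $A_K\subseteq C_A(R)$: if $g\in K=\ker(\si)$ then $\si_g=\identity$, so the relation gives $ar=\si_g(r)a=ra$ for all $a\in A_g$ and $r\in R$. Hence every homogeneous component of $A_K$ centralizes $R$, and since $A_K=\bigoplus_{g\in K}A_g$ we conclude $A_K\subseteq C_A(R)$.

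For the reverse inclusion, take $a\in C_A(R)$ and write $a=\sum_{g\in\Z^n}a_g$ with $a_g\in A_g$. For each $r\in R$ the identity $0=[a,r]=\sum_g(\si_g(r)-r)a_g$ is $\Z^n$-homogeneous, so comparing components yields $(\si_g(r)-r)a_g=0$ for all $g\in\Z^n$ and all $r\in R$. It then remains to show $a_g=0$ whenever $g\notin K$, i.e. whenever $\si_g\neq\identity$. First suppose $R$ is $\Z^n$-simple. Let $J_g\subseteq R$ be the ideal generated by $\{\si_g(r)-r\mid r\in R\}$; then $J_g a_g=0$. Using that $\Z^n$ is abelian (so $\si_h\si_g=\si_g\si_h$) one checks $\si_h(\si_g(r)-r)=\si_g(\si_h(r))-\si_h(r)$, which shows $\si_h$ merely permutes the generating set of $J_g$; hence $J_g$ is $\Z^n$-invariant. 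If $g\notin K$ then $J_g\neq\{0\}$, so $\Z^n$-simplicity forces $J_g=R$, and then $a_g=1\cdot a_g=0$.

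Now suppose instead that $R$ is a domain. Fix $g\notin K$ and choose $r$ with $s:=\si_g(r)-r\neq 0$, so that $sa_g=0$. If $a_g\neq 0$, then by non-degeneracy of the gradation form $\ga_A$ (Corollary \ref{cor:gamma_nondeg}) there exists $b\in A_{-g}$ with $\ga_A(a_g,b)=a_gb\neq 0$; note $a_gb\in A_0=R$. Multiplying $sa_g=0$ on the right by $b$ gives $s(a_gb)=0$ with $s\neq 0$ and $a_gb\in R\setminus\{0\}$, contradicting that $R$ is a domain. Hence $a_g=0$, and in either case $a=\sum_{g\in K}a_g\in A_K$, completing the proof.

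The main obstacle is the domain case. Unlike the $\Z^n$-simple case, which stays entirely inside $R$ by packaging the relations $(\si_g(r)-r)a_g=0$ into the invariant ideal $J_g$, here one must transfer the torsion relation $sa_g=0$ from the homogeneous component $A_g$ back into $R$ itself; this is nontrivial because we do \emph{not} assume the $t_i$ regular, so $A_g$ need not be $R$-torsion-free a priori and one cannot directly cancel $s$. The non-degeneracy of $\ga_A$ supplies precisely the element $b\in A_{-g}$ needed to push the product into $R\setminus\{0\}$, where the domain hypothesis finally delivers the contradiction.
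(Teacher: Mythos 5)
Your proof is correct and follows the same overall strategy as the paper: reduce to homogeneous components via $0=[a,r]=\sum_g(\si_g(r)-r)a_g$, then show $a_g=0$ for $g\notin K$ under each hypothesis, and your domain case (pushing $sa_g=0$ into $R$ via an element $b\in A_{-g}$ with $a_gb\neq 0$ supplied by Corollary \ref{cor:gamma_nondeg}) is essentially identical to the paper's. The one genuine difference is your treatment of the $\Z^n$-simple case: the paper works with the annihilator-type ideal $J=\{s\in R\mid(\si_g(r)-r)s=0,\ \forall r\in R\}$, which it must first show is non-zero by again invoking Corollary \ref{cor:gamma_nondeg} to produce $a_gc\in J\setminus\{0\}$; you instead take the ideal $J_g$ generated by the elements $\si_g(r)-r$ themselves, observe that it annihilates $a_g$, is $\Z^n$-invariant, and is non-zero precisely when $g\notin K$, so that $\Z^n$-simplicity gives $1\in J_g$ and hence $a_g=0$ directly. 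Your variant is slightly cleaner in that subcase, since it stays entirely inside $R$ and does not need the non-degeneracy of the gradation form there; the paper's version has the mild advantage of treating both hypotheses with a single preparatory step (producing $a_gc\in R\setminus\{0\}$) before branching. Both arguments are sound.
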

\begin{proof}
Take an arbitrary $g=(g_1,\ldots,g_n)\in K$ and $a\in A_g$.
Suppose that $a\notin C_A(R)$. Then there exists
an $r\in R$ such that $ar\neq ra$. On the other hand, $ar=\si_g(r)a$
and hence $\si_g(r)$ cannot be equal to $r$. This shows that $\si_g\neq\identity_R$,
which contradicts $g\in K$. Therefore $a\in C_A(R)$. Since $a$ and $g$ were
arbitrary, this shows that $A_K \subseteq C_A(R)$.

For the converse, take an arbitrary non-zero $a\in C_A(R)$ (clearly $0 \in A_K \cap C_A(R)$)
and let $r\in R$ be arbitrary.
Write $a=\sum_{g\in\Z^n} a_g$, with $a_g\in A_g$ for $g\in \Z^n$. Then
$0=[a,r]=\sum_{g\in\Z^n} [a_g,r]$.
Since $\deg r=0$ we have $[a_g,r]\in A_g$ for each $g\in G$
and from the gradation on $A=\bigoplus_{g\in\Z^n} A_g$ we conclude that $[a_g,r]=0$ for each $g\in G$.
Pick an arbitrary $g\in G$ for which $a_g\neq 0$. Then, since $r$ was arbitrary,
\begin{equation}\label{eq:maxcomm_pf}
0=[a_g,r]=(\si_g(r)-r)a_g , \quad\forall r\in R.
\end{equation}
Since $a_g\neq 0$ there is, by Corollary \ref{cor:gamma_nondeg},
an element $c\in A_{-g}$ such that $a_gc\neq 0$.
Multiplying \eqref{eq:maxcomm_pf} from the right by $c$ we get
$(\si_g(r)-r)a_gc=0$ for all $r\in R$, and $0\neq a_gc\in R$. If $R$ is a domain, then we get that $\si_g(r)=r$ for all $r\in R$. If instead $R$ is $\Z^n$-simple, then note that the set
\[J=\{s\in R\mid (\si_g(r)-r)s=0, \;\;\forall r\in R\}\]
is a non-zero (since $a_gc\in J$) $\Z^n$-invariant ideal of $R$ and hence $1\in J$ which means that
$\si_g=\identity_R$. In both cases we conclude that $g\in K$ and since $g \in \Supp(a)$ was arbitrarily chosen we get $a\in A_K$. This shows that $C_A(R)\subseteq A_K$.
\end{proof}

\begin{corollary}\label{MaxCommConditions}
Let $A=A\TGWdat$ be a TGWA.
If $R$ is maximal commutative in $A$, then $\si:\Z^n\to\Aut_{\K}(R)$ is
injective. Conversely, if $R$ is a domain or $R$ is $\Z^n$-simple, then injectivity of $\si$ implies that $R$ is maximal commutative in $A$.
\end{corollary}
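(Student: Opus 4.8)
The statement packages two implications, and my plan is to treat them separately, leaning throughout on Theorem \ref{CommutantDescription}, which always gives $A_K\subseteq C_A(R)$ and gives the reverse inclusion $C_A(R)\subseteq A_K$ precisely when $R$ is a domain or is $\Z^n$-simple. For the converse, suppose $\si$ is injective, so $K=\ker(\si)=\{0\}$ and hence $A_K=\bigoplus_{g\in K}A_g=A_0=R$. Since $R=A_0$ is commutative, it remains only to check $C_A(R)=R$, and under either of the two hypotheses Theorem \ref{CommutantDescription} yields $C_A(R)=A_K=R$, so $R$ is maximal commutative. This direction is essentially immediate once Theorem \ref{CommutantDescription} is available.

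For the forward implication I would argue by contraposition: assuming $\si$ is not injective, I aim to produce an element of $C_A(R)$ lying strictly outside $R$. Pick $g\in K$ with $g\neq 0$. Because $\si_g=\identity_R$, every $a\in A_g$ satisfies $ar=\si_g(r)a=ra$ for all $r\in R$, so $A_g\subseteq C_A(R)$ (this is the inclusion $A_K\subseteq C_A(R)$ of Theorem \ref{CommutantDescription}); as $g\neq 0$ and the gradation is a direct sum, any \emph{nonzero} element of $A_g$ then witnesses $C_A(R)\supsetneq R$, contradicting maximal commutativity. The natural candidate is the monic monomial $m_g=Z_1^{(g_1)}\cdots Z_n^{(g_n)}\in A_g$.

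The crux, and the step I expect to be the main obstacle, is showing that $m_g$ (or some element of $A_g$) is nonzero. Setting $m_g^{\ast}=Z_n^{(-g_n)}\cdots Z_1^{(-g_1)}$, the product $m_gm_g^{\ast}\in A_0=R$ is a product of factors $\si_h(t_i)$, and $m_g\neq 0$ follows as soon as this product is nonzero. If $R$ is a domain this is automatic, since each $t_i\neq 0$ and each $\si_h$ is an automorphism; if instead each $t_i$ is regular, then Theorem \ref{thm:regularly_graded_char} guarantees that every monic monomial is nonzero. In either situation the monomial argument closes the proof cleanly. The delicate point is the degenerate case, where monomials in the $K$-direction may collapse in the gradical quotient and $m_gm_g^{\ast}$ can vanish; here I would instead invoke the non-degeneracy of $\ga_A$ (Corollary \ref{cor:gamma_nondeg}) to extract a nonzero homogeneous centralizing element of nonzero degree, tracing how the vanishing of $m_g$ forces torsion relations elsewhere in the gradation. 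I expect this degenerate analysis to require the most care, and it is the part of the argument I would scrutinize most closely.
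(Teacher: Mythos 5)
Your proposal follows the same route the paper takes: Corollary \ref{MaxCommConditions} is stated with no separate proof, as an immediate consequence of Theorem \ref{CommutantDescription}, and your treatment of the converse direction ($\si$ injective gives $K=\{0\}$, hence $C_A(R)=A_K=A_0=R$ when $R$ is a domain or $\Z^n$-simple) is exactly that argument and is complete.

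For the forward direction you have correctly isolated the one point that is not automatic: to contradict maximal commutativity from some $0\neq g\in K$ one needs a \emph{nonzero} element of $A_g$, and in a TGWA (as opposed to a TGWC) homogeneous components can vanish in the quotient by the gradical --- the example in Section \ref{IdealInterSectionsForTGWA} with $R=\C[H]/(H^2)$ shows $X^2$ generating an ideal that meets $R$ trivially, so $A_{2}=\{0\}$ in the corresponding TGWA. Your monomial argument ($m_gm_g^{\ast}$ is a product of elements $\si_h(t_i)$, hence nonzero if $R$ is a domain with all $t_i\neq 0$, or if the $t_i$ are regular via Theorem \ref{thm:regularly_graded_char}) settles this in every case in which the corollary is actually applied later in the paper. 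However, your proposed fallback for the fully general case does not work as described: Corollary \ref{cor:gamma_nondeg} is a statement about elements of $A_g$ that are already nonzero (for $0\neq a\in A_g$ there exists $b\in A_{-g}$ with $ab\neq 0$); it cannot certify that $A_g\neq\{0\}$ in the first place, which is precisely what is needed. So in the completely general setting (non-regular $t_i$, $R$ neither a domain nor $\Z^n$-simple) your argument has a gap at exactly the spot you flagged --- though, to be fair, the paper's own unstated argument passes over the same point in silence, and you deserve credit for identifying it explicitly. Under the hypotheses in force wherever the corollary is used (regularly graded, or $R$ a domain), your proof is complete and matches the intended one.
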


It is an interesting question to ask if the centralizer $C_A(R)$ is
commutative. If this is the case,
then $C_A(R)$ is a maximal commutative subalgebra of $A$, more specifically the
unique maximal commutative subalgebra of $A$ containing $R$.
The following result gives a description of commuting elements in the centralizer.
\begin{theorem}\label{thm:commutantsubalg}
 Let $A=A\TGWdat$ be a regularly graded TGWA.
Let $H$ be any subgroup of $K=\ker(\si)$
of rank one. Then the subalgebra $\bigoplus_{g\in H} A_g$ is commutative.
\end{theorem}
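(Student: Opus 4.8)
The plan is to exploit the fact that a rank-one subgroup is generated by a \emph{single} element, so that the whole subalgebra $B:=\bigoplus_{g\in H}A_g$ is controlled by the powers of one monic monomial. Write $H=\Z k$ with $k\in K=\ker(\si)$; then $\si_{mk}=\identity_R$ for every $m\in\Z$, and by Theorem \ref{CommutantDescription} each $A_{mk}$ lies in $C_A(R)$, so that $R=A_0$ is a central commutative subalgebra of $B$. Fix the monic monomial $u:=Z_1^{(k_1)}\cdots Z_n^{(k_n)}\in A_k$ and let $v:=u^*$ be the reversed monomial $Z_n^{(-k_n)}\cdots Z_1^{(-k_1)}\in A_{-k}$, so that $v^m=(u^m)^*$ for all $m\ge 0$.

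First I would record three facts. (a) Every monic monomial $M$ is regular in $A$: if $Mb=0$, then decomposing $b$ into homogeneous components and appending a monic monomial $Q$ of complementary degree, property \eqref{it:regular_property} of Theorem \ref{thm:regularly_graded_char} forces each homogeneous component of $b$ to vanish; the right-handed statement is analogous. Consequently $u^m$ and $v^m=(u^m)^*$ are regular in $A$, and since $u^mv^m=u^m(u^m)^*$ and $v^mu^m=(u^m)^*u^m$ lie in $R$, these products are regular elements of $R$. (b) By the flip relation (Lemma \ref{lem:nondegtgwaflip}) applied to $g=k\in K$ we get $uv=\si_k(vu)=vu$, so $u$ and $v$ commute; hence all powers $u^m,v^l$ commute and $u^mu^l=u^{m+l}$. (c) A homogeneous $c\in A$ with $cs=0$ for some regular $s\in R$ must vanish: for every $d\in A_{-\deg c}$ one has $dc\in R$ and $(dc)s=d(cs)=0$, so $dc=0$, whence $c\in\Rad_\mathrm{l}(\gamma_A)=\{0\}$ by non-degeneracy (Corollary \ref{cor:gamma_nondeg}).

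The engine of the proof is a straightening identity: for $a\in A_{mk}$ with $m\ge 0$, setting $x:=av^m\in R$ and $q_m:=v^mu^m\in R_{\reg}$ one has $a\,q_m=a(v^mu^m)=(av^m)u^m=x\,u^m$, and symmetrically in negative degrees one straightens against powers of $u$ to land on $v^{|m|}$. To prove $[A_{mk},A_{lk}]=0$ I would take $a\in A_{mk}$, $b\in A_{lk}$, and in the case $m,l\ge 0$ compute $ab$ and $ba$ after multiplying by the regular central element $q_mq_l$: using that $q_m,q_l$ lie in the central $R$, that $\si_{mk}=\si_{lk}=\identity_R$, and that $u^mu^l=u^{m+l}=u^lu^m$, both products collapse to $xx'u^{m+l}$ with $x=av^m$, $x'=bv^l$. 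Hence $(ab-ba)q_mq_l=0$, and fact (c) gives $ab=ba$. The mixed-sign case $m\ge 0>l$ is identical, except that $b$ is straightened onto $v^{|l|}$ and one then needs $u^mv^{|l|}=v^{|l|}u^m$, which is exactly fact (b); the remaining sign patterns follow from the antisymmetry $[a,b]=-[b,a]$ together with the symmetry $k\mapsto -k$ (which merely interchanges the roles of $u$ and $v$).

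The main obstacle is conceptual rather than computational: the obvious attempt to deduce commutativity directly from Lemma \ref{lem:nondegtgwaflip} is circular, since that relation together with $\si_{mk}=\identity_R$ only yields the cyclic invariance $xyz=yzx$ of triple products landing in $R$, and cyclic invariance alone never forces $ab=ba$ (it is available in \emph{every} direction, not just rank-one ones). The genuinely new input, and the only place where the rank-one hypothesis is used, is that a rank-one $H$ is generated by a single element, so that one monic monomial $u$ straightens \emph{all} components $A_{mk}$ and its powers satisfy $u^mu^l=u^{m+l}$ automatically. The technical point to handle with care is that the components $A_{mk}$ need not be free of rank one over $R$, so the straightening holds only after multiplying by a regular element of $R$; regularity of the gradation is precisely what produces these regular elements (fact (a)) and, through non-degeneracy of $\gamma_A$, what allows us to cancel them again (fact (c)).
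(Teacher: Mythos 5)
Your proof is correct, but it is assembled differently from the paper's. Both arguments rest on the same two pillars — the flip relation of Lemma \ref{lem:nondegtgwaflip} specialized to degrees $\pm g$ with $g\in K$ (your fact (b), $uv=vu$), and the cancellation afforded by regular gradedness — but the paper deploys them by multiplying $ab$ and $ba$ on the right by $c^{k+m}$ for an \emph{arbitrary} monic monomial $c$ of degree $-g$, so that the product lands in degree zero, then cycling with $\si_g=\identity_R$ and cancelling $c^{k+m}$ via Theorem \ref{thm:regularly_graded_char}\eqref{it:regular_property}; this forces a two-step bootstrap (first the case $k=1$, where the flip gives $ca=ac$ directly, then general $k$ by iterating that case). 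You instead normalize each homogeneous component: $aq_m=(av^m)u^m$ with $q_m=v^mu^m\in R_{\reg}$ central in $\bigoplus_{g\in H}A_g$, reducing every commutator to commutativity of $R$ together with $u^mv^l=v^lu^m$, and then cancel the regular element of $R$ via non-degeneracy of $\gamma_A$ (Corollary \ref{cor:gamma_nondeg}) rather than cancelling a monic monomial. All the supporting facts you invoke check out against the paper ((a) is the content of Theorem \ref{thm:regularly_graded_char}\eqref{it:torsionfree}/\eqref{it:regular_property}, your use of Theorem \ref{CommutantDescription} to centralize $R$ is legitimate since $H\subseteq K$, and your fact (c) is a correct consequence of non-degeneracy). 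What your route buys is transparency about where rank one enters — a single monomial $u$ straightens every component, so no induction on $k$ is needed — and it is morally a localization argument, presenting $\bigoplus_{g\in H}A_g$ as sitting inside something like $R[u^{\pm1}]$ after inverting the $q_m$; what the paper's route buys is brevity and independence from any choice of generator-monomial. Your remark that cyclic invariance alone cannot suffice is well taken, and indeed the paper's proof also uses the genuine two-element flip $ca=\si_{-g}(ac)=ac$, not just cyclicity.
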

\begin{proof}
Write $H=\Z\cdot g$, where $g\in K$. Let $a,b$ be any homogeneous elements in $A$
of degrees in $H$, say $\deg a = kg$, $\deg b = mg$ where $k,m\in\Z$.
If we can show that $ab-ba=0$, then we are done.
By replacing $g$ by $-g$ we can assume that $k+m\ge 0$.

First we assume that $k=1$. 
Let $c$ be any monic monomial of degree $-g$.
Then $abc^{m+1}$ has degree zero, and thus
\begin{alignat*}{2}
ab c^{m+1} &= \si_g (bc^{m+1}a)  &&\quad\text{by Lemma \ref{lem:nondegtgwaflip}},\\
&=b c^{m+1} a &&\quad\text{since $g\in K$},\\
&=b a c^{m+1} &&\quad\text{since $ca = \sigma_{-g}(ac)=ac$ by Lemma \ref{lem:nondegtgwaflip} and
since $g\in K$}.
\end{alignat*}
By Theorem \ref{thm:regularly_graded_char}\eqref{it:regular_property}, we conclude
that $ab-ba=0$.

Now let $k$ be general.
Let $c$ be any monic monomial of degree $-g$.
Then $abc^{k+m}$ has degree zero, and thus, like before we have
\[ab c^{k+m} = \si_{kg}(b c^{k+m} a) =b c^{k+m} a = b a c^{k+m} \]
where we have used the case $k=1$, $k+m$ times in the last step.
This shows that $(ab-ba)c^{k+m}=0$ 
and by Theorem \ref{thm:regularly_graded_char}\eqref{it:regular_property},
we conclude that $ab-ba=0$.
\end{proof}

Combining Theorem \ref{CommutantDescription} and Theorem \ref{thm:commutantsubalg}
we immediately get the following sufficient condition for the centralizer of $R$ in $A$
to be commutative.
\begin{corollary} \label{cor:commutantsubalg}
Let $A=A\TGWdat$ be a regularly graded TGWA, where $R$ is either a domain or is $\Z^n$-simple.
If $K=\ker(\si)$ has rank at most one, then the centralizer of $R$ in $A$ is commutative.
\end{corollary}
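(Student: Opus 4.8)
The plan is to derive the statement directly from the two preceding theorems, with only a short structural observation about $K$ in between. First I would apply Theorem \ref{CommutantDescription}: since $R$ is assumed to be a domain or $\Z^n$-simple, that theorem identifies the centralizer exactly as $C_A(R)=A_K=\bigoplus_{g\in K}A_g$. Consequently, to prove the corollary it suffices to show that the subalgebra $A_K$ is commutative.

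Next I would analyze $K=\ker(\si)$ as a subgroup of $\Z^n$. Because $\Z^n$ is free abelian, every subgroup of it is itself free abelian of finite rank, and in particular torsion-free. The hypothesis that $K$ has rank at most one therefore leaves exactly two cases. If $K$ has rank $0$, then $K=\{0\}$, so $A_K=A_0=R$, which is commutative by hypothesis. If $K$ has rank $1$, then $K$ is infinite cyclic, say $K=\Z g$ for some $g\in\Z^n$; in this case $K$ is itself a subgroup of $\ker(\si)$ of rank one.

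In the rank-one case I would invoke Theorem \ref{thm:commutantsubalg} with the choice $H=K$. Since $A$ is regularly graded and $H$ is a rank-one subgroup of $\ker(\si)$, that theorem yields that $\bigoplus_{g\in H}A_g=A_K$ is commutative. Together with the rank-zero case this shows that $A_K$ is commutative in all cases, and hence $C_A(R)=A_K$ is commutative, which is the assertion.

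I do not expect any genuine obstacle here: essentially all of the content has already been established in Theorems \ref{CommutantDescription} and \ref{thm:commutantsubalg}. The only point requiring minor care is the elementary remark that a subgroup of $\Z^n$ of rank at most one must be either trivial or infinite cyclic, which is precisely what permits the reduction to $A_0=R$ on the one hand and to a single application of Theorem \ref{thm:commutantsubalg} (taking $H=K$) on the other.
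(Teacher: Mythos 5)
Your proposal is correct and follows exactly the route the paper intends: the paper gives no separate proof, stating only that the corollary is obtained by combining Theorem \ref{CommutantDescription} (identifying $C_A(R)=A_K$ under the domain or $\Z^n$-simplicity hypothesis) with Theorem \ref{thm:commutantsubalg} (applied with $H=K$ in the rank-one case). Your explicit handling of the rank-zero case, where $A_K=A_0=R$ is commutative by assumption, is a small but welcome addition that the paper leaves implicit.
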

The condition of $K$ having rank at most one is not necessary.
In the next section we give a large family of examples where the
centralizer is commutative, which cover cases for which $K$ has arbitrarily large rank.

The following theorem, which is the main result of this section,
 gives a necessary and sufficient condition for the subalgebra
 $A_K$ of a TGWA $A$ to be commutative. 
It is a generalization of \cite[Lemma 5]{MT_def} (see Remark \ref{rem:MTrelation} below).
\begin{theorem}\label{thm:commutant_suff}
Let $A=A\TGWdat$ be a TGWA and let
$K=\ker(\si)$. The subalgebra $A_K:=\bigoplus_{g\in K} A_g$ is
commutative if and only if there exists a $\Z$-basis
$\{k_1,\ldots,k_s\}$ for $K$ with the following property:
\begin{gather}\label{eq:commutant_suff}
\begin{aligned}
&\text{ For any two basis elements $k_i\neq k_j$, and $m,l\in\Z$, we have $[A_{mk_i},A_{lk_j}]=0$.}
\end{aligned}
\end{gather}
\end{theorem}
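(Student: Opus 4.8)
The ``only if'' direction is immediate: if $A_K$ is commutative, then for \emph{any} $\Z$-basis $\{k_1,\ldots,k_s\}$ of the free abelian group $K$ and any $m,l\in\Z$ we have $mk_i,lk_j\in K$, so that $A_{mk_i},A_{lk_j}\subseteq A_K$ and hence $[A_{mk_i},A_{lk_j}]=0$. Thus every basis satisfies the stated condition, and the whole content of the theorem lies in the converse.

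For the ``if'' direction, the plan is to fix a basis $\{k_1,\ldots,k_s\}$ satisfying \eqref{eq:commutant_suff} and to upgrade the hypothesized relations, which only involve components lying in a \emph{single} basis direction, to the assertion $[A_g,A_h]=0$ for arbitrary $g,h\in K$. Three ingredients are available unconditionally for every TGWA. First, Lemma~\ref{lem:nondegtgwaflip} specialized to $p\in K$ (where $\si_p=\identity$) yields $[A_p,A_{-p}]=0$, i.e. components of opposite degree inside $K$ always commute. Second, by Corollary~\ref{cor:gamma_nondeg} the gradation form $\ga_A$ is non-degenerate, so a homogeneous element $d\in A_{g+h}$ vanishes as soon as $dc=0$ for all $c\in A_{-(g+h)}$ (and symmetrically from the left). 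Third, $A_K\subseteq C_A(R)$ by Theorem~\ref{CommutantDescription}, so $R$ is central in $A_K$ and it suffices to verify commutativity on homogeneous components.

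I would then proceed in two stages. The first stage is the single-direction case: for each $i$ the subalgebra $B_i:=\bigoplus_{p\in\Z}A_{pk_i}$ is commutative. When $A$ is regularly graded this is exactly Theorem~\ref{thm:commutantsubalg} applied to the rank-one subgroup $\Z k_i\subseteq K$, whose proof uses the flip relation to reduce $[a,b]$ for $a\in A_{mk_i}$, $b\in A_{lk_i}$ to an equation $(ab-ba)c^{\,m+l}=0$ and then cancels via Theorem~\ref{thm:regularly_graded_char}\eqref{it:regular_property}. In a general TGWA one cannot cancel a single monic monomial, so the cancellation must instead be run through the non-degeneracy of $\ga_A$: using Lemma~\ref{lem:nondegtgwaflip} to rewrite $(ab-ba)c$ in terms of the commuting pairs $A_{pk_i}\times A_{-pk_i}$, one aims to show $(ab-ba)c=0$ for \emph{every} $c\in A_{-(m+l)k_i}$ and then conclude $ab=ba$ from Corollary~\ref{cor:gamma_nondeg}. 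The second stage assembles the directions: writing $g=\sum_i m_ik_i$ and $h=\sum_i l_ik_i$, I would induct on the number of basis directions occurring in $g$ and $h$, peeling off one direction at a time. At each step the flip relation transports one factor to the opposite side, reducing a commutator $[A_{g'},A_{lk_i}]$ with $g'$ supported away from $k_i$ to the pairwise relations guaranteed by \eqref{eq:commutant_suff} together with the single-direction commutativity of the first stage, and the resulting equality is again promoted to an identity of elements by non-degeneracy.

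The main obstacle is precisely this propagation from basis-level to mixed-degree commutation. The difficulty is structural: a component $A_g$ with $g=\sum_i m_ik_i$ is \emph{not} the product of the components $A_{m_ik_i}$ and need not even be a cyclic $R$-module (for example $A_{e_i+e_j}$ contains both $X_iX_j$ and $X_jX_i$, which are in general $R$-independent), so one cannot simply factor an element of $A_g$ along the chosen basis. Moreover the flip relation is too symmetric to be used naively: applied to a whole product it only shows that $ab-ba$ \emph{commutes} with every complementary element, which is circular. The decisive point is therefore to break this symmetry by exploiting the directional hypothesis \eqref{eq:commutant_suff} one basis vector at a time while keeping all cancellations inside the non-degenerate pairing $A_p\times A_{-p}\to R$, so that no regularity of the $t_i$ is required.
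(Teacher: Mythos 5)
Your ``only if'' direction is fine (and correctly notes that every basis then works). The ``if'' direction, however, is a plan rather than a proof, and the plan stalls at exactly the step you yourself flag as the main obstacle. Your second stage proposes to peel off basis directions from $g=\sum_i m_ik_i$ and $h=\sum_i l_ik_i$ by induction, but, as you observe, an element of $A_g$ does not factor along the chosen basis, and applying Lemma~\ref{lem:nondegtgwaflip} to a whole degree-zero word only tells you that the word is $\si$-fixed, which is circular. You do not supply the idea that breaks this deadlock. The paper's resolution is to leave $a\in A_\alpha$ and $b\in A_\beta$ untouched and instead introduce \emph{auxiliary} monic monomials that factor along the basis by construction: $a'=a_1'\cdots a_s'$ with $\deg a_i'=-\alpha_ik_i$, $b'=b_1'\cdots b_s'$ with $\deg b_i'=-\beta_ik_i$, and $c=c_1\cdots c_s$ with $\deg c_i=\beta_ik_i$. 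Two applications of Lemma~\ref{lem:nondegtgwaflip} (using $\alpha,\beta\in K$) give $(aba'b')c=baca'b'$; then $c_i$ commutes with $a_i',b_i'$ by the rank-one case (Theorem~\ref{thm:commutantsubalg}) and with $a_j',b_j'$ for $j\neq i$ by hypothesis \eqref{eq:commutant_suff}, so $c$ commutes with $a'b'$, whence $(ab-ba)a'b'c=0$, and the monic monomial $a'b'c$ is cancelled. All the directional input of \eqref{eq:commutant_suff} enters only through these auxiliary monomials; this is precisely the symmetry-breaking device your sketch is missing.

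There is a second gap: you propose to run the cancellations through the non-degeneracy of $\ga_A$ (Corollary~\ref{cor:gamma_nondeg}) ``so that no regularity of the $t_i$ is required''. Non-degeneracy lets you conclude $d=0$ from $dx=0$ for \emph{all} $x$ in the complementary homogeneous component, but both in your stage one and in any assembled version of the argument the vanishing $(ab-ba)x=0$ is obtained only for very particular $x$ (powers of a single monic monomial, or products of monomials supported on individual basis directions), and in a general TGWA such elements need not span the full complementary component --- your own example of $X_iX_j$ versus $X_jX_i$ being $R$-independent makes the point. So this replacement does not go through as stated. The paper's proof in fact invokes Theorem~\ref{thm:regularly_graded_char}\eqref{it:regular_property} (and Theorem~\ref{thm:commutantsubalg}, which is stated for regularly graded TGWAs) at exactly this point: the ability to cancel a single monic monomial is genuinely used, and that is where regularity of the $t_i$ enters.
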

\begin{proof}
Clearly \eqref{eq:commutant_suff} is necessary for $A_K$ to be commutative.
Conversely, assume that $\{k_1,\ldots,k_s\}$ is a $\Z$-basis for $K$
such that \eqref{eq:commutant_suff} holds and let $a,b\in A_K$ be arbitrary homogeneous elements.
Then there are unique integers $\alpha_i,\beta_i$ such that
\[
\alpha:=\deg a=\sum_{i=1}^s \alpha_i k_i,\qquad
\beta:=\deg b =\sum_{i=1}^s \beta_i k_i.
\]
For $i \in \{1,\ldots,s\}$, let $a_i'$ be a monic monomial of degree $-\alpha_ik_i$
and $b_i'$ be a monic monomial of degree $-\beta_i k_i$,
and $c_i$ be a monic monomial of degree $\beta_i k_i$.
Put $a'=a_1'\cdots a_s'$ and $b'=b_1'\cdots b_s'$ and $c=c_1\cdots c_s$.
Then using Lemma \ref{lem:nondegtgwaflip} and that $\alpha,\beta\in K$ we have
\begin{equation}\label{eq:suffcalc}
(aba'b')c =\sigma_\alpha(ba'b'a)c = b(a'b'ac)=b\sigma_{-\alpha-\beta}(aca'b')=
baca'b'.
\end{equation}
For any $i \in \{1,\ldots,s\}$, the element $c_i$ commutes with $a_i'$ and $b_i'$
by Theorem \ref{thm:commutantsubalg}, because
all three elements are contained in the subalgebra $\bigoplus_{g\in \Z k_i} A_g$
and $\Z k_i$ is a subgroup of $K$ of rank one.
Also, if $i\neq j$, then $c_i$ commutes with $a_j'$ and with $b_j'$ due
to assumption \eqref{eq:commutant_suff}.
Thus $c$ commutes with both $a'$ and $b'$ which together with \eqref{eq:suffcalc}
entails that $(ab-ba)a'b'c = 0$. 
By Theorem \ref{thm:regularly_graded_char}\eqref{it:regular_property}, this implies
that $ab-ba=0$.
\end{proof}

\begin{remark}\label{Rem:RDomZnSimple}
If $R$ is either a domain or $\Z^n$-simple, then by Theorem \ref{CommutantDescription}, $A_K$ coincides
with the centralizer $C_A(R)$.
\end{remark}
\begin{remark}\label{rem:MTrelation}
Theorem \ref{thm:commutant_suff} is a generalization of \cite[Lemma 5]{MT_def} where
it was proved that $A_K$ is commutative under the assumptions
that $\mu_{ij}=1$ for all $i,j$ and that $X_iX_j=X_jX_i$ for any $i,j\in F(W)$, where
$F(W)$ is the set of $i\in \nn$ such that there is a $k=\sum_{j=1}^n k_je_j \in K$
with $k_i\neq 0$. Even if we only assume that $\mu_{ij}=1$ for all $i\neq j$, this assumption is still stronger
than the one in Theorem \ref{thm:commutant_suff} because there exist examples where
$A_K$ is commutative without satisfying the condition of \cite[Lemma 5]{MT_def}
(see Example \ref{ex:A2}).
\end{remark}

The following is an example where the centralizer $C_A(R)$ of $R$ in
$A$ is not commutative.
\begin{exmp}\label{ex:mu}
Let $n=2$, $R=\K=\C$, $\si_1=\si_2=\identity_\C$, $t_1=t_2=1$ and $\mu_{12}=2$, $\mu_{21}=\frac{1}{2}$.
Then the only consistency relation from \eqref{eq:tgwa_consistency} is $t_1t_2=\mu_{12}\mu_{21}\si_1^{-1}(t_2)\si_2^{-1}(t_1)$, which holds.
Let $A=A\TGWdat$. Then $X_1r=rX_1$ for all $r\in R$ so $R$ is not maximal commutative in $A$.
In fact $A=C_A(R)$, and $A$ is not commutative since $X_1Y_2=2Y_2X_1$.
\end{exmp}

\subsection{Maximal commutative subalgebras of TGWAs associated to Cartan matrices} \label{sec:TqC}
In this section we show that an interesting and more explicit description of the
centralizer $C_A(R)$ is possible for a class of TGWA introduced in \cite{H09}.
Moreover, in all these cases the centralizer is commutative, hence maximal commutative in $A=A\TGWdat$.

Recall that a matrix $C=(a_{ij})_{1\le i,j\le n}$ with integer entries is called a
\emph{generalized Cartan matrix} if the following assertions hold:
\begin{enumerate}[(i)]
	\item $a_{ii}=2$ for all $i \in \nn$;
	\item $a_{ij}\le 0$ for all $i\neq j$;
	\item For all $i,j \in \nn$, $a_{ij}=0$ if and only if $a_{ji}=0$.
\end{enumerate}
Generalized Cartan matrices are fundamental in the theory of Kac-Moody algebras.

In this section we assume that $\K$ is a field.
In \cite{H09} a family of TGWAs were constructed, denoted
 $\mathcal{T}_{q,\mu}(C)$, where $q\in\K\backslash\{0\}$,
$\mu=(\mu_{ij})_{1\le i,j\le n}$ and $C=(a_{ij})_{1\le i,j\le n}$,
a symmetric generalized Cartan matrix.
We will consider the special case when $\mu_{ij}=1$ for all $i,j \in \nn$ and denote
these algebras by $\mathcal{T}_q(C)$. Their construction is as follows.

Take $R$ to be the following polynomial algebra over $\K$:
\[R=\K[H_{ij}^{(k)}\mid 1\le i<j\le n, \text{ and } k=a_{ij}, a_{ij}+2, \ldots, -a_{ij}].\]
Define $\si_1,\ldots,\si_n\in\Aut_{\K}(R)$ by setting, for all $i<j$
and $k=a_{ij}, a_{ij}+2, \ldots, -a_{ij}$:
\begin{subequations}\label{eq:cartantgwa_sigmadef}
\begin{align}
\label{eq:cartantgwa_sigmadef1}
\si_j(H_{ij}^{(k)})&= q^k H_{ij}^{(k)}+H_{ij}^{(k-2)}, \qquad
 \text{where $H_{ij}^{(a_{ij}-2)}:=0$},\\
\label{eq:cartantgwa_sigmadef2}
\si_i(H_{ij}^{(k)})&=\si_j^{-1}(H_{ij}^{(k)}),\\
\si_r(H_{ij}^{(k)})&=H_{ij}^{(k)},\qquad r\neq i,j
\end{align}
\end{subequations}
and define $\si:\Z^n\to\Aut_\K(R)$ by $\si_g=\si_1^{g_1}\circ\cdots\circ\si_n^{g_n}$ for $g\in\Z^n$.
For notational purposes, put
\[H_{ij}=H_{ij}^{(-a_{ij})},\quad
 H_{ji}=\si_j^{-1}(H_{ij}) \quad\text{for all $i<j$,
 \quad and $H_{ii}=1$ for all $i\in\nn$}\]
and define
\[t_i=H_{i1}H_{i2}\cdots H_{in}, \quad \text{for $i\in\nn$.}\]

One can verify that the $\si_i$'s commute with eachother, and that
consistency relations \eqref{eq:tgwa_consistency} hold (see \cite{H09}
for details).
The algebra $\mathcal{T}_q(C)$ is defined as the TGWA associated
to the above data, $\mathcal{T}_q(C)=A\TGWdat$, where
$\mu_{ij}=1$ for all $i,j\in\nn$.

For $q\in\K\backslash\{0\}$,
put $[k]_q=q^{-k+1}+q^{-k+3}+\cdots +q^{k-1}$ if $k\in\Z_{\ge 0}$
and $[-k]_q=-[k]_q$ if $k\in\Z_{<0}$.
Recall that $q$ is said to have \emph{quantum characteristic zero} if,
for any integer $n$,
$[n]_q=0$ implies that $n=0$.

Let $\Gamma_C$ be the Coxeter graph associated to $C$; its vertex
set is $V(\Gamma_C)=\{1,\ldots,n\}$ and $i,j$ are connected if and only if $a_{ij}<0$
(we do not need to label the edges here).
Let $\mathrm{Comp}(\Gamma_C)$ be the set of connected components
of the graph $\Gamma_C$.
 Let $K$ be the kernel of the group homomorphism
$\si:\Z^n\to\Aut_{\K}(R)$. For each $\gamma\in\mathrm{Comp}(\Gamma_C)$
define $g_\gamma\in\Z^n$ by
\[g_{\gamma} = \sum_{i\in V(\gamma)} e_i\]
where $\{e_1,\ldots,e_n\}$ is the standard basis of $\Z^n$, and
$V(\gamma)$ is the vertex set of the subgraph $\gamma$.

Let $A=T_q(C)$. By Theorem \ref{CommutantDescription}, the centralizer $C_A(R)$ is
equal to $A_K=\bigoplus_{g\in K}A_g$.
We have the following description of the gradation group $K$.
\begin{theorem}\label{thm:Kbas}
Assume that $q\in\K\backslash\{0\}$ has quantum characteristic zero.
Let $C$ be an $n\times n$ symmetric generalized Cartan matrix and 
let $A=\mathcal{T}_q(C)=A\TGWdat$ be the twisted generalized Weyl
algebra defined above. Then the set
\begin{equation}\label{eq:Kbas}
\{g_{\gamma}\mid \gamma\in\mathrm{Comp}(\Gamma_C)\}
\end{equation}
forms a $\Z$-basis for the kernel $K$ of $\si:\Z^n\to\Aut_{\K}(R)$.
In particular, the rank of the free 
abelian group $K$ is equal to the number of connected components of the
Coxeter graph $\Gamma_C$ of $C$.
\end{theorem}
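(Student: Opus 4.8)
The plan is to compute the action of each generator $\si_i$ on $R$ explicitly enough to determine precisely which group elements $g \in \Z^n$ satisfy $\si_g = \identity_R$. The key observation is that the generators $H_{ij}^{(k)}$ for a fixed pair $i<j$ form a finite "chain" under the action of $\si_j$ (and inversely under $\si_i$), governed by the recursion \eqref{eq:cartantgwa_sigmadef1}. Since only $\si_i$ and $\si_j$ act non-trivially on the variables $H_{ij}^{(k)}$, and they act inversely to one another (by \eqref{eq:cartantgwa_sigmadef2}), the composite $\si_i \si_j$ fixes each $H_{ij}^{(k)}$. This already suggests that an element $g = \sum_r g_r e_r$ lies in $K$ only if $g_i = g_j$ whenever $i,j$ are connected in $\Gamma_C$ (so that $a_{ij} < 0$ and the chain is non-trivial), which forces $g$ to be constant on each connected component --- exactly the span of the $g_\gamma$.

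\medskip

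\textbf{First} I would show the inclusion $\{g_\gamma\} \subseteq K$, i.e.\ that each $g_\gamma$ lies in the kernel. For a fixed component $\gamma$, the element $\si_{g_\gamma} = \prod_{r \in V(\gamma)} \si_r$ acts on a generator $H_{ij}^{(k)}$ as follows: if $i,j$ are both outside $V(\gamma)$, the action is trivial; if both $i,j \in V(\gamma)$, then the contribution is $\si_i\si_j$ which fixes $H_{ij}^{(k)}$ by \eqref{eq:cartantgwa_sigmadef2}. The remaining case, exactly one of $i,j$ in $V(\gamma)$, cannot occur for a \emph{non-trivial} variable: if $a_{ij} < 0$ then $i,j$ are adjacent, hence in the same component, so they are either both in or both out. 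When $a_{ij}=0$ there are no variables $H_{ij}^{(k)}$ at all. Thus $\si_{g_\gamma} = \identity_R$. Linear independence of the $g_\gamma$ over $\Z$ is immediate since they have disjoint supports.

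\medskip

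\textbf{The main work}, and the hard part, is the reverse inclusion: showing every $g \in K$ is a $\Z$-combination of the $g_\gamma$, equivalently that $\si_g = \identity_R$ forces $g$ to be constant on each connected component. The cleanest route is to analyze the action of $\si_g$ on a single chain $\{H_{ij}^{(k)}\}_{k}$ for an edge $i<j$ with $a_{ij}<0$. On this chain the only relevant exponents are $g_i$ and $g_j$, and $\si_g$ restricts to $\si_i^{g_i}\si_j^{g_j} = \si_j^{g_j - g_i}$ (using $\si_i = \si_j^{-1}$ on these variables). I would then compute the matrix of $\si_j^{m}$ acting on the finite-dimensional span of $\{H_{ij}^{(k)}\}$: by \eqref{eq:cartantgwa_sigmadef1} the map $\si_j$ is unitriangular (up to the diagonal scalars $q^k$) with a non-zero shift term $H_{ij}^{(k-2)}$, so $\si_j^m$ has an off-diagonal entry proportional to $m \cdot [\text{something involving } q]$. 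Here is where \emph{quantum characteristic zero} enters decisively: this off-diagonal coefficient, expressible via $q$-integers $[m]_q$ or a sum $\sum q^{jk}$, vanishes only when $m=0$. Hence $\si_j^{g_j-g_i}=\identity$ on the chain forces $g_j = g_i$. Applying this to every edge of $\Gamma_C$ shows $g$ is constant on each connected component, which means $g = \sum_\gamma (g|_\gamma)\, g_\gamma$, completing the proof and simultaneously verifying the rank statement.

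\medskip

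\textbf{I expect the genuine obstacle} to be the explicit computation of $\si_j^m$ on the chain and extracting the precise $q$-dependent coefficient whose vanishing must be controlled. One must verify that the shift term does not accidentally cancel after iterating, and that the resulting scalar is indeed of the form forcing $m=0$ under quantum characteristic zero --- a careful bookkeeping of the recursion $\si_j(H_{ij}^{(k)}) = q^k H_{ij}^{(k)} + H_{ij}^{(k-2)}$ across the finitely many steps $k = a_{ij}, a_{ij}+2, \ldots, -a_{ij}$. This is routine linear algebra in principle but requires care to identify the correct $q$-integer and to confirm the hypothesis on $q$ is used sharply rather than merely invoked.
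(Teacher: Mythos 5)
Your proposal follows essentially the same route as the paper: reduce to the action of $\si_j^{g_j-g_i}$ on the chain $\{H_{ij}^{(k)}\}$ for each edge of $\Gamma_C$, extract a $q$-integer coefficient $[g_j-g_i]_q$ whose vanishing is excluded by quantum characteristic zero unless $g_i=g_j$, and conclude that elements of $K$ must be constant on connected components; the explicit computation you defer is carried out in the paper by applying $\si_j^{m}$ to the single variable $H_{ij}^{(2+a_{ij})}$, yielding the off-diagonal coefficient $q^{(a_{ij}+1)(m-1)}[m]_q$. One small correction: when $a_{ij}=0$ the variable $H_{ij}^{(0)}$ does exist (it is not absent as you claim), but it is fixed by both $\si_i$ and $\si_j$ since $H_{ij}^{(a_{ij}-2)}:=0$, so your argument is unaffected.
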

\begin{proof} 
Clearly the $g_\gamma$'s are linearly independent over $\Z$.
Let $g=(g_1,\ldots,g_n)\in\Z^n$ and $1\le i<j\le n$.
If $a_{ij}=0$, then $\si_g(H_{ij}^{(0)})=H_{ij}^{(0)}$.
If $a_{ij}<0$, then, by \eqref{eq:cartantgwa_sigmadef},
\begin{align*}
\si_g(H_{ij}^{(2+a_{ij})}) &=
\si_j^{g_j-g_i}(H_{ij}^{(2+a_{ij})})= \\ &=
q^{(2+a_{ij})(g_j-g_i)} H_{ij}^{(2+a_{ij})}+ q^{(a_{ij}+1)(g_j-g_i-1)}[g_j-g_i]_q
 H_{ij}^{(a_{ij})}.
\end{align*}
So $g\in K$ implies that $g_i=g_j$ for all $i,j$ for which
$a_{ij}<0$, i.e. which are connected in $\Gamma_C$. This shows that
any $g\in K$ is a $\Z$-linear combination of the elements $g_\gamma$,
for $\gamma\in\mathrm{Comp}(\Gamma_C)$.

Conversely, if $g\in\mathrm{Comp}(\Gamma_C)$, then $g_i=g_j$
for all $i\neq j$ with $a_{ij}<0$ and thus
$\sigma_g(H_{ij}^{(k)})=\si_j^{g_j-g_i}(H_{ij}^{(k)})=H_{ij}^{(k)}$
for all $i<j$ and all $k$. So $\sigma_g$ fixes all the generators
of $R$ and hence $g\in K$.
\end{proof}

A generalized Cartan matrix is said to be \emph{indecomposable} if it cannot be 
rearranged, by applying simultaneous row and column permutations,
into a block matrix with more than one block.
An immediate corollary of Theorem \ref{thm:Kbas} is that if the Coxeter graph
$\Gamma_C$ of $C$ is connected (which is equivalent to that $C$ is indecomposable)
then $K$ has rank one, and hence by Corollary \ref{cor:commutantsubalg}, $C_A(R)$
is commutative.
The following theorem shows that this holds for any TGWA $A=\mathcal{T}_q(C)$
associated to a symmetric generalized Cartan matrix $C$, not necessarily indecomposable.

\begin{theorem} \label{thm:TCcommutant}
Let $C$ be a symmetric generalized Cartan matrix,
and $q\in\K\backslash\{0\}$ of quantum characteristic zero.
Let $A=\mathcal{T}_q(C)$.
Then the centralizer $C_A(R)$ is commutative (hence maximal commutative in $A$).
\end{theorem}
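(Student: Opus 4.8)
The goal is to show $C_A(R)$ is commutative for $A = \mathcal{T}_q(C)$, where $C$ is an arbitrary symmetric generalized Cartan matrix. Let me think about how the earlier machinery applies.

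First, by Theorem CommutantDescription, since $R$ is a polynomial algebra (hence a domain), $C_A(R) = A_K$ where $K = \ker(\sigma)$.

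Second, Theorem commutant_suff gives a necessary and sufficient condition: $A_K$ is commutative iff there's a $\mathbb{Z}$-basis $\{k_1,\ldots,k_s\}$ of $K$ with $[A_{mk_i}, A_{lk_j}] = 0$ for $i \neq j$ and all $m, l \in \mathbb{Z}$.

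Third, Theorem Kbas tells us exactly what such a basis is: $\{g_\gamma \mid \gamma \in \mathrm{Comp}(\Gamma_C)\}$, indexed by connected components of the Coxeter graph.

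So the plan crystallizes: **verify condition eq:commutant_suff for the basis $\{g_\gamma\}$.** For two distinct components $\gamma, \delta$, I need $[A_{m g_\gamma}, A_{l g_\delta}] = 0$.

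**Why this should work.**

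The key point: $g_\gamma = \sum_{i \in V(\gamma)} e_i$ involves only generators $X_i, Y_i$ for vertices in $\gamma$, and $g_\delta$ involves only generators for vertices in $\delta$. Since $\gamma, \delta$ are *distinct* connected components, $V(\gamma) \cap V(\delta) = \emptyset$, and more importantly, **there are no edges between them**, meaning $a_{ij} = 0$ for all $i \in V(\gamma)$, $j \in V(\delta)$.

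Now $a \in A_{m g_\gamma}$ is (by Lemma LEMMATWOSIX, working with reduced monomials) an $R$-combination of monic monomials in the $X_i, Y_i$ with $i \in V(\gamma)$; similarly $b \in A_{l g_\delta}$ uses only $j \in V(\delta)$. To show $ab = ba$, I need the generators with indices in $\gamma$ to commute with those indexed in $\delta$.

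**The heart of the argument.** This reduces to showing that $X_i Y_j = Y_j X_i$, $X_i X_j = X_j X_i$, $Y_i Y_j = Y_j Y_i$ for $i \in V(\gamma)$, $j \in V(\delta)$ (and that the $R$-coefficients, which live in a subalgebra acted on trivially in the relevant directions, pass through). Since $\mu_{ij} = 1$ throughout $\mathcal{T}_q(C)$, relation eq:rel3 already gives $X_i Y_j = Y_j X_i$. For the $XX$ and $YY$ relations I expect to use that $a_{ij} = 0$ forces the relevant Serre-type defining relations of $\mathcal{T}_q(C)$ to collapse to simple commutation. **I anticipate this is the main obstacle:** I must confirm that the defining relations of $\mathcal{T}_q(C)$ (which come from the gradical quotient, i.e. all relations forced by non-degeneracy of $\gamma_A$) do imply $X_i X_j = X_j X_i$ when $a_{ij} = 0$. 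Concretely, I would check this using property Theorem regularly_graded_char(iv): to prove $X_i X_j - X_j X_i = 0$, it suffices to multiply by a suitable monic monomial $c$ of complementary degree and show the product vanishes; then compute $(X_iX_j - X_jX_i)c$ by moving things past each other using Lemma nondegtgwaflip and the fact that $\sigma_i, \sigma_j$ fix the generators $H_{i'j'}^{(k)}$ relevant here (since $a_{ij}=0$ means no $H_{ij}^{(k)}$ variables couple $i$ and $j$).

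**Assembling the proof.** Once the generator-level commutativity for cross-component indices is established, a monomial in $A_{m g_\gamma}$ and one in $A_{l g_\delta}$ commute by moving generators past each other one at a time, handling the $R$-coefficients via the fact that $\sigma_i$ for $i \in V(\gamma)$ fixes the $H$-variables attached to component $\delta$ (again because there are no edges, so eq:cartantgwa_sigmadef gives trivial action across components). This verifies eq:commutant_suff, and Theorem commutant_suff then yields that $A_K = C_A(R)$ is commutative. Finally, since $R$ is a domain and $\sigma$ need not be injective, $C_A(R)$ being commutative and containing the commutative $R$ makes it the unique maximal commutative subalgebra of $A$ containing $R$, giving the parenthetical claim. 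The one calculation I'd want to be careful with is the cross-component $XX$/$YY$ commutation, verifying no hidden $q$-twist survives when $a_{ij} = 0$.
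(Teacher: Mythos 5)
Your proposal is correct and follows essentially the same route as the paper: verify condition \eqref{eq:commutant_suff} of Theorem \ref{thm:commutant_suff} for the basis $\{g_\gamma\}$ of Theorem \ref{thm:Kbas}, reducing to cross-component commutation of the generators. The only difference is that the paper simply cites \cite[Theorem 5.2(c)]{H09} for $X_iX_j=X_jX_i$ and $Y_iY_j=Y_jY_i$ when $a_{ij}=0$, whereas you sketch a direct verification via Theorem \ref{thm:regularly_graded_char}\eqref{it:regular_property} (which does go through, since $a_{ij}=0$ makes $\sigma_i$ fix $t_j$).
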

\begin{proof} We will show that the
$\Z$-basis $\{g_\gamma\mid \gamma\in\mathrm{Comp}(\Gamma_C)\}$ for $K$
satisfies condition \eqref{eq:commutant_suff} of Theorem \ref{thm:commutant_suff}.
Let $\gamma,\gamma'\in\mathrm{Comp}(\Gamma_C)$, $\gamma\neq \gamma'$,
and assume that $a\in A_{m g_\gamma}$, $b\in A_{lg_{\gamma'}}$ for some $m,l\in\Z$.
Then $ab=ba$ due to the fact that
$X_iY_j=Y_jX_i$ for all $i\neq j$, and, by \cite[Theorem 5.2(c)]{H09},
$X_iX_j=X_jX_i$ and $Y_iY_j=Y_jY_i$ for all $i\in V(\gamma), j\in V(\gamma')$.
Thus, by Theorem \ref{thm:commutant_suff}, $C_A(R)$ is commutative.
\end{proof}

\begin{exmp}\label{ex:A2}
Let $C=\left[\begin{smallmatrix}2&-1\\-1&2 \end{smallmatrix}\right]$ and
consider the algebra $\mathcal{T}_q(C)$. The Coxeter graph $\Gamma_C$ is 
\begin{tikzpicture}[baseline=0ex]
\draw (1ex,.7ex)  -- (5ex,.7ex);
\fill (1ex,.7ex) circle (.4ex);
\fill (5ex,.7ex) circle (.4ex);
\end{tikzpicture}.
Thus $K=\Z\cdot (1,1)$,
so $1,2\in F(W)$ in the notation of \cite{MT_def} (see Remark \ref{rem:MTrelation} above).
However, $X_1X_2\neq X_2X_1$ and hence \cite[Lemma 5]{MT_def}
cannot be applied to conclude that the
subalgebra $A_K:=\bigoplus_{g\in \Z^n} A_g$ is commutative.
Nevertheless, by Theorem \ref{thm:TCcommutant}, $A_K$ is commutative.
Furthermore, since $R$ is a domain, $A_K$ coincides with the centralizer
 $C_R(A)$ by Theorem \ref{CommutantDescription}.
\end{exmp}

\section{Ore localizations and a finiteness condition for TGWAs}
\label{sec:Ore}

In this section we introduce a finiteness condition for TGW data, in order
to have suitable Ore sets that allow us to define a well-behaved localization.
The motivation for doing so is that this particular localization turns out
to be useful in the subsequent section when deriving conditions for simplicity
of TGWAs.

\begin{definition}
Let $\TGWdat$ be a TGW datum and let $S$ be a subalgebra of $R$.
We say that $\TGWdat$ is \emph{left $S$-finitistic} if
for any $i,j\in\nn,\, i\neq j,$ there exist some $k\in\Z_{\geq 0}$ and
$s_1,\ldots,s_k\in S$, such that
\begin{equation}\label{eq:finitistic}
\si_i^k(t_j)+s_1\si_i^{k-1}(t_j)+\cdots +s_{k-1}\si_i (t_j) + s_k t_j=0.
\end{equation}
Similarly $\TGWdat$ is called \emph{right $S$-finitistic} if
for any $i,j\in \nn,\, i\neq j$, there exist some $m\in\Z_{\geq 0}$ and
$s_1',\ldots,s_m'\in S$, such that
\begin{equation}\label{eq:right_finitistic}
t_j + s_1'\si_i(t_j) + \ldots + s_m' \si_i^m(t_j)=0.
\end{equation}
$\TGWdat$ is \emph{$S$-finitistic} if it is both left and right $S$-finitistic.
If $\TGWdat$ is $S$-finitistic, then $A\TGWdat$ is also said to be \emph{$S$-finitistic}.
\end{definition}
\begin{remark}
The cases $S=R$ and $S=\K$ were considered in \cite{H09}, with the slight difference
that in \cite{H09} the existence of $s_1,\ldots,s_k$ and $s_1',\ldots,s_m'$ such that
\eqref{eq:finitistic} and \eqref{eq:right_finitistic} hold was required also when $i=j$.
\end{remark}
\begin{prop} Let $\TGWdat$ be a TGW datum.
\begin{enumerate}
\item[{\rm (i)}] If $S_1$ and $S_2$ are subalgebras of $R$ such that $S_1\subseteq S_2$ and if $\TGWdat$ is left (right) $S_1$-finitistic, then $\TGWdat$ is also left (right) $S_2$-finitistic.
\item[{\rm (ii)}] If $R$ is Noetherian, then $\TGWdat$ is $R$-finitistic.
\item[{\rm (iii)}] If $S$ is a subalgebra of $R$ and $\TGWdat$ is $S$-finitistic then for all $i\neq j$,
\begin{multline}\label{eq:finitistic_mij}
\min\{k\in\Z_{\ge 0}\mid \text{equation \eqref{eq:finitistic} holds for some $s_1,\ldots,s_k\in S$}\} = \\= \min\{m\in\Z_{\ge 0}\mid \text{equation \eqref{eq:right_finitistic} holds for some $s_1',\ldots,s_m'\in S$}\}.
\end{multline}
The common number in \eqref{eq:finitistic_mij} is denoted by $m_{ij}$.
\end{enumerate}
\end{prop}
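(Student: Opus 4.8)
Parts (i) and (ii) are short. For (i), any elements $s_1,\dots,s_k\in S_1$ witnessing \eqref{eq:finitistic} (resp. $s_1',\dots,s_m'\in S_1$ witnessing \eqref{eq:right_finitistic}) already lie in $S_2$, since $S_1\subseteq S_2$; hence the very same identities witness left (resp. right) $S_2$-finitisticness, and the conclusion is immediate. For (ii), fix $i\neq j$ and consider in the Noetherian ring $R$ the two ascending chains of ideals $I_k=\sum_{l=0}^{k-1}R\,\si_i^l(t_j)$ and $I_k^-=\sum_{l=0}^{k-1}R\,\si_i^{-l}(t_j)$. Both stabilize, so there are $k,k'$ with $\si_i^{k}(t_j)\in I_{k}$ and $\si_i^{-k'}(t_j)\in I_{k'}^-$. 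The first inclusion, written out, is exactly a relation of the form \eqref{eq:finitistic}. For the second, write $\si_i^{-k'}(t_j)=\sum_{l=0}^{k'-1}r_l\,\si_i^{-l}(t_j)$ and apply the automorphism $\si_i^{k'}$; this yields $t_j=\sum_{l=0}^{k'-1}\si_i^{k'}(r_l)\,\si_i^{k'-l}(t_j)$, whose exponents $k'-l$ range over $\{1,\dots,k'\}$, i.e. a relation of the form \eqref{eq:right_finitistic}. Thus $\TGWdat$ is $R$-finitistic.

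For (iii) write $v_l=\si_i^l(t_j)\in R$ for $l\in\Z$, so that $v_{l+1}=\si_i(v_l)$; a left relation \eqref{eq:finitistic} of degree $k$ is precisely the statement $v_k\in\sum_{l=0}^{k-1}S v_l$ with the displayed coefficients, while a right relation \eqref{eq:right_finitistic} of degree $m$ says $v_0\in\sum_{l=1}^{m}S v_l$. Both minima exist because $\TGWdat$ is $S$-finitistic; call them $p$ (left) and $q$ (right). The plan is to prove $q\le p$ and, symmetrically, $p\le q$. For the first inequality I start from a minimal left relation $v_p+s_1 v_{p-1}+\cdots+s_{p-1}v_1+s_p v_0=0$ and isolate the lowest term, obtaining
\begin{equation}\label{eq:plan_isolate}
s_p\, v_0=-\Big(v_p+s_1 v_{p-1}+\cdots+s_{p-1}v_1\Big)\in\sum_{l=1}^{p}S v_l.
\end{equation}
If the boundary coefficient $s_p$ were a unit of $S$, then \eqref{eq:plan_isolate} would immediately give $v_0\in\sum_{l=1}^{p}S v_l$, i.e. a right relation of degree $\le p$, whence $q\le p$; the reverse inequality is obtained in the same way by isolating the top term $s_q' v_q$ of a minimal right relation and dividing by $s_q'$.

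The crux — and the step I expect to be the main obstacle — is therefore the invertibility in $S$ of the boundary coefficient of a \emph{minimal} one-sided relation. When $\K$ is a field and one takes $S=\K$ this is automatic: there $\si_i$ fixes the scalar coefficients, so applying $\si_i^{-1}$ to a minimal relation does not leave $S$, and minimality then forces the boundary coefficient to be nonzero, hence a unit. For a general subalgebra $S$ this argument breaks down precisely because $\si_i$ need not map $S$ into $S$, so shifting a relation by $\si_i^{\pm1}$ twists its coefficients out of $S$; consequently neither upward-closedness of the set of admissible degrees nor nonvanishing of the boundary coefficient is free. I would attack this by feeding the (already available) opposite relation back into \eqref{eq:plan_isolate}: substituting a right relation $v_0=\sum_{l\ge 1}d_l v_l$ with $d_l\in S$ into the left relation eliminates $v_0$ with \emph{no} $\si_i$-twist, and iterating this elimination between the minimal left and right relations should pin $p=q$. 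The delicate point, which is where the real work lies, is to verify that this elimination terminates with the required coefficient being a genuine unit of $S$ rather than merely a regular element of $R$.
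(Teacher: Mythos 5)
Parts (i) and (ii) of your proposal are correct. For (ii) the paper simply cites \cite{H09}, whereas you supply the standard Noetherian argument with the two ascending chains $I_k$ and $I_k^-$; that argument is complete and valid (the application of $\si_i^{k'}$ to convert the stabilization of $I_{k'}^-$ into a relation of the form \eqref{eq:right_finitistic} is unproblematic since there $S=R$).

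Part (iii), however, contains a genuine gap, and you have located it yourself: your plan hinges on the boundary coefficient $s_p$ of a minimal left relation (respectively $s_q'$ of a minimal right relation) being a unit of $S$, and you never establish this. There is no reason for it to hold: minimal relations are not unique, and the only coefficients that are canonically normalized to $1$ are the \emph{leading} coefficient of \eqref{eq:finitistic} and the \emph{trailing} coefficient of \eqref{eq:right_finitistic}. The paper's proof exploits exactly these two normalized $1$'s and never inverts anything in $S$. It runs as follows: assume both minima exist, call them $k$ and $m$, and suppose $m<k$ (the case $m>k$ is symmetric). Multiply \eqref{eq:right_finitistic} by $-s_k$ and add it to \eqref{eq:finitistic}; because the coefficient of $t_j$ in \eqref{eq:right_finitistic} is $1$, this kills the constant term $s_kt_j$, and because $m<k$ it only perturbs the interior coefficients $s_{k-1},\ldots,s_{k-m}$ (by elements $-s_ks_l'$ of $S$, $S$ being a subalgebra) while leaving the leading term $\si_i^k(t_j)$ intact. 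One may therefore assume $s_k=0$ in a minimal left relation. Applying $\si_i^{-1}$ to the resulting identity then produces a left relation of length $k-1$, contradicting minimality of $k$. Your ``elimination'' idea in the last paragraph is in fact the first half of this argument (substituting the right relation to remove $v_0$ with no twist), but the correct conclusion to draw from it is a degree drop after one application of $\si_i^{-1}$, not the invertibility of a coefficient. Note that this final untwisting step does tacitly use that the shifted coefficients $\si_i^{-1}(s_l)$ again lie in $S$ — harmless in every case the paper uses ($S=R$, $S=\K$, or $S$ assumed $\Z^n$-invariant) — but it is applied only once and only after the constant term has already been removed, which is what makes the argument close where yours stalls.
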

\begin{proof} Part (i) is trivial, and part (ii) was proved in \cite{H09}.
For part (iii), fix $i\neq j$ and assume that \eqref{eq:finitistic} and \eqref{eq:right_finitistic} hold with $k$ and $m$ minimal, but $m\neq k$.
Suppose that $m<k$; the case $m>k$ can be treated analogously. 
 Then by multiplying \eqref{eq:right_finitistic} by $-s_k$ and adding to \eqref{eq:finitistic} we can assume that $s_k=0$. But then we can apply $\si_i^{-1}$ to \eqref{eq:finitistic} and get a contradiction to the minimality of $m$.
\end{proof}
To an $S$-finitistic TGW datum $\TGWdat$ of degree $n$ we associate a matrix $C=C_S\TGWdat=(a_{ij})_{1\le i,j\le n}$ given by
\[a_{ij}=\begin{cases}1-m_{ij},&i\neq j \\ 2,& i=j \end{cases}\]
where $m_{ij}$ was defined in \eqref{eq:finitistic_mij}.

\begin{prop} If $\TGWdat$ is an $S$-finitistic TGW datum where $S$ is a $\Z^n$-invariant subalgebra of $R$ and $t_i \in R_{\reg}$ for each $i\in \nn$, then
$C_S\TGWdat$ is a generalized Cartan matrix.
\end{prop}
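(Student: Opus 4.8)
The plan is to verify directly the three defining properties of a generalized Cartan matrix for $C_S\TGWdat=(a_{ij})$. Property (i), that $a_{ii}=2$, holds by definition. For property (ii) I would first observe that $m_{ij}\ge 1$ for all $i\neq j$: the minimum in \eqref{eq:finitistic_mij} is well-defined because $\TGWdat$ is $S$-finitistic, and the value $k=0$ is excluded since \eqref{eq:finitistic} with $k=0$ reads $t_j=0$, contradicting the regularity of $t_j$. Hence $a_{ij}=1-m_{ij}\le 0$.

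The substance of the proof is property (iii), namely that $m_{ij}=1$ if and only if $m_{ji}=1$; by symmetry it suffices to prove one implication. I would begin with a convenient reformulation: by part (iii) of the preceding proposition the minimal $k$ in \eqref{eq:finitistic} and the minimal $m$ in \eqref{eq:right_finitistic} both equal $m_{ij}$, so $m_{ij}=1$ is equivalent to the existence of $c\in S$ with
\[
\si_i(t_j)=c\,t_j
\]
(the $k=1$ instance of \eqref{eq:finitistic} with $c=-s_1$), while $m_{ji}=1$ is equivalent to the existence of $s_1'\in S$ with $t_i+s_1'\si_j(t_i)=0$ (the $m=1$ instance of \eqref{eq:right_finitistic} for the pair $(j,i)$). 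Having both the left and the right formulations available is what makes the argument go through.

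Assuming $m_{ij}=1$, the key step is to transport the relation $\si_i(t_j)=c\,t_j$ over to $t_i$ and $\si_j(t_i)$ by means of the consistency condition \eqref{eq:tgwa_consistency1}. Concretely, I would apply $\si_j$ to obtain $\si_i\si_j(t_j)=\si_j(c)\,\si_j(t_j)$ (the $\si$'s commute), substitute this into \eqref{eq:tgwa_consistency1} rewritten as $\si_i\si_j(t_i)\,\si_i\si_j(t_j)=\mu_{ij}\mu_{ji}\,\si_i(t_i)\,\si_j(t_j)$ (using that $R$ is commutative), and cancel the regular element $\si_j(t_j)$ to get $\si_i\si_j(t_i)\,\si_j(c)=\mu_{ij}\mu_{ji}\,\si_i(t_i)$. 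Applying $\si_i^{-1}$ then yields
\[
\si_j(t_i)\,d=\mu_{ij}\mu_{ji}\,t_i,\qquad d:=\si_i^{-1}\si_j(c),
\]
and here the hypothesis that $S$ is $\Z^n$-invariant is essential, as it guarantees $d\in S$. Since $\mu_{ij}\mu_{ji}$ is a unit of $\K\subseteq S$, rearranging gives $t_i-(\mu_{ij}\mu_{ji})^{-1}d\,\si_j(t_i)=0$, which is exactly the $m=1$ instance of \eqref{eq:right_finitistic} for $(j,i)$; hence $m_{ji}=1$.

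I expect the main obstacle to be the bookkeeping in this final step: one must use the correct (left versus right) finitistic formulation at each end so that the minimal-length equations match up, and must track that each coefficient produced, namely $\si_j(c)$, $d$, and $(\mu_{ij}\mu_{ji})^{-1}d$, genuinely lies in $S$, which is precisely where the $\Z^n$-invariance of $S$ and the inclusion $\K\subseteq S$ are invoked. Regularity of the $t_i$ is used twice: to exclude $m_{ij}=0$ and to license the cancellation of $\si_j(t_j)$.
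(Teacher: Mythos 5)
Your proof is correct and follows essentially the same route as the paper: establish $m_{ij}\ge 1$ from regularity of the $t_j$, and then transport the length-one relation through the consistency condition \eqref{eq:tgwa_consistency1}, using the $\Z^n$-invariance of $S$ to keep the coefficients in $S$. The only difference is that you spell out the cancellation computation that the paper leaves implicit (and you get by with just the left-hand relation $\si_i(t_j)=c\,t_j$, whereas the paper also records $t_j=s'\si_i(t_j)$ and the resulting unit $s's=1$), which is a harmless elaboration rather than a different argument.
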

\begin{proof}
Since $t_i \in R_{\reg}$ for each $i\in \nn$ we have $m_{ij}\geq 1$ for all $i\neq j$. Hence $a_{ij}\leq 0$ for all $i\neq j$.
Suppose that $a_{ij}=0$ for some $i\neq j$. This means that $m_{ij}=1$ and hence $\sigma_i(t_j)=s t_j$ and $t_j = s' \sigma_i(t_j)$
for some $s, s' \in S$. We may combine these two relations with the regularity of $t_j$ to conclude that $s's=1$.
By \cite[Corollary 2.17]{FutHart} the regularity of the $t_i$'s implies that consistency relations \eqref{eq:tgwa_consistency} hold.
The $\Z^n$-invariance of $S$ in combination with relation \eqref{eq:tgwa_consistency1} and $\sigma_i(t_j)=s t_j$ and $t_j = s' \sigma_i(t_j)$ yields $m_{ji}=1$, i.e. $a_{ji}=0$.
\end{proof}
\begin{remark}
The algebras $T_q(C)$ associated to symmetric generalized Cartan matrices $C$ (see
Section \ref{sec:TqC}) are $\K$-finitistic and have the property that their respective generalized Cartan matrices, as defined above, is precisely $C$. This was the main point
of their construction in \cite{H09}.
\end{remark}

Recall that a \emph{regular left (right) Ore set} $S$ in an algebra $A$ is a multiplicatively
closed subset consisting of regular non-zero elements and containing $1$ such that 
$Sa\cap As\neq \emptyset$ ($aS\cap sA\neq\emptyset$) for all $a\in A$, $s\in S$.
We now prove a theorem which connects the property of a TGWA $A$ being $R$-finitistic to certain natural subsets of $A$ being Ore sets.
\begin{theorem}\label{thm:Ore}
Let $A=A\TGWdat$ be a regularly graded TGWA.
For $i\in\nn$, define the following subsets of $A$:
\begin{equation}
\mathcal{X}_i:=\{X_i^k \mid k\in\Z_{\ge 0}\}, \qquad
\mathcal{Y}_i:=\{Y_i^k \mid k\in\Z_{\ge 0}\}.
\end{equation}
Then the following three assertions are equivalent:
\begin{enumerate}
\item[{\rm (i)}] $\TGWdat$ is left (right) $R$-finitistic;
\item[{\rm (ii)}] $\mathcal{X}_i$ is a regular left (right) Ore set in $A$ for each $i \in \nn$;
\item[{\rm (iii)}] $\mathcal{Y}_i$ is a regular left (right) Ore set in $A$ for each $i \in \nn$.
\end{enumerate}
\end{theorem}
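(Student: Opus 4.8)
The plan is to prove the theorem by establishing the equivalence (i) $\Leftrightarrow$ (ii), since (i) $\Leftrightarrow$ (iii) will follow by an entirely symmetric argument (swapping the roles of $X_i$ and $Y_i$, and of $\sigma_i$ and $\sigma_i^{-1}$). I will focus on the left-finitistic case, as the right case is dual. The first observation is that each $\mathcal{X}_i$ consists of regular nonzero elements: by Theorem~\ref{thm:regularly_graded_char}\eqref{it:torsionfree}, since $A$ is regularly graded, each monic monomial $X_i^k$ is nonzero and generates a free left and right $R$-module of rank one, and moreover $X_i$ itself is regular (as seen in the proof of \eqref{it:ti_regular}~$\Rightarrow$~\eqref{it:regularly_graded}). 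Each $\mathcal{X}_i$ is clearly multiplicatively closed and contains $1=X_i^0$. So the real content is the left Ore condition: for every $a\in A$ and every $k\geq 0$ we must find $a'\in A$ and $l\geq 0$ with $X_i^l a = a' X_i^k$, i.e. $\mathcal{X}_i a \cap A\mathcal{X}_i \neq \emptyset$.

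For the direction (i) $\Rightarrow$ (ii), the key reduction is that, by Lemma~\ref{LEMMATWOSIX}, $A$ is generated as a left $R$-module by reduced monomials, and using the commutation relations \eqref{eq:rel1} and \eqref{eq:rel3} one can push powers of $X_i$ past elements of $R$ and past $X_j, Y_j$ for $j\neq i$ freely (up to scalars $\mu_{ij}$ and twists $\sigma_i$). Thus the only genuine obstruction to the Ore condition comes from the interaction of $X_i$ with $Y_i$ through relation \eqref{eq:rel2}, namely $Y_iX_i=t_i$ and $X_iY_i=\sigma_i(t_i)$. The finitistic condition \eqref{eq:finitistic} is precisely what controls how $X_i^k$ interacts with $Y_j$ (equivalently with $t_j$): the linear relation $\sigma_i^k(t_j)+s_1\sigma_i^{k-1}(t_j)+\cdots+s_k t_j=0$ lets one express $X_i^k Y_j$ back in terms of lower products, so that repeated application produces the required left multiple $X_i^l a$ lying in $A X_i^k$. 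I expect the cleanest formulation is to verify the Ore condition on generators $a = Y_j$ (and $a=r\in R$, $a=X_j$, which are easy) and then extend multiplicatively, checking that the Ore property for $\mathcal{X}_i$ is stable under the $R$-module and ring operations.

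For the converse (ii) $\Rightarrow$ (i), the strategy is to extract relation \eqref{eq:finitistic} directly from the Ore condition applied to a well-chosen element. Fixing $i\neq j$ and taking $a=Y_j$, $k=1$, the left Ore condition gives some $l$ and some $a'\in A$ with $X_i^l Y_j = a' X_i$. Writing $a'$ in terms of reduced monomials and comparing homogeneous components of degree $l e_i - e_j$, together with the relations $X_i^m Y_j = \mu_{ij}^m Y_j X_i^m$ and the formulas for how $t_j$ twists under $\sigma_i$, should force a nontrivial $R$-linear dependence among $t_j, \sigma_i(t_j), \ldots, \sigma_i^l(t_j)$, which is exactly \eqref{eq:finitistic} (after possibly applying a power of $\sigma_i$). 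Here the regularity of the $t_i$'s and property \eqref{it:regular_property} of Theorem~\ref{thm:regularly_graded_char} are the tools that convert an identity holding after multiplication by a monic monomial into a genuine relation in $R$.

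The main obstacle I anticipate is bookkeeping in the forward direction: carefully tracking how an arbitrary element of $A$, expanded in reduced monomials, interacts with a high power $X_i^k$, and verifying that the finitistic relation supplies enough ``room'' to clear all the $Y_i$-factors on the appropriate side. The subtlety is that a single reduced monomial may contain several $Y_i$'s, so one must iterate the finitistic relation and control the growth of the exponent $l$; making this induction precise — ideally by reducing to the single-variable sub-TGWA generated by $X_i, Y_i$ over $R$ and handling the $j\neq i$ commutations separately via \eqref{eq:rel3} — is where the care is needed. Once the Ore condition is verified on $Y_i$ (the rank-one generalized Weyl subalgebra case, essentially Jordan's original computation) the general case should follow by the multiplicativity of the Ore property and the fact that $\mathcal{X}_i$ commutes with the $j\neq i$ generators up to units.
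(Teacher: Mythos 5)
Your proposal misidentifies where the finitistic condition actually enters, and this derails both directions. For the left Ore condition $\mathcal{X}_i a\cap A\mathcal{X}_i\neq\emptyset$, the generators $r\in R$, $Y_j$ ($j\neq i$) and $Y_i$ cause no difficulty at all: $X_i^l r=\si_i^l(r)X_i^l$ by \eqref{eq:rel1}, $X_i^l Y_j=\mu_{ij}^l Y_jX_i^l$ by \eqref{eq:rel3}, and $X_i^{k+1}Y_i=\si_i^{k+1}(t_i)X_i^k\in AX_i^k$ by \eqref{eq:rel2}. The genuine obstruction is the generator $X_j$ for $j\neq i$, since there is no defining relation letting you move $X_i$ past $X_j$; one needs $X_i^lX_j\in AX_i^k$ for some $l$, and since $A_{le_i+e_j}=\sum_{a+b=l}RX_i^aX_jX_i^b$ this amounts to writing $X_i^lX_j$ as an $R$-combination of monomials $X_i^aX_jX_i^b$ with $b\geq k$. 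Multiplying such an identity by a suitable monomial in $Y_i,Y_j$ and using Theorem \ref{thm:regularly_graded_char}\eqref{it:regular_property} together with the regularity of $X_i^lY_i^l$ in $R$ converts it into exactly the $R$-linear dependence among $t_j,\si_i(t_j),\ldots,\si_i^l(t_j)$ of \eqref{eq:finitistic}, resp.\ \eqref{eq:right_finitistic}. This is the paper's argument. Your assertion that the case $a=X_j$ is ``easy'' while the difficulty lies in clearing $Y_i$- and $Y_j$-factors is exactly backwards: the relation $\si_i^k(t_j)+s_1\si_i^{k-1}(t_j)+\cdots+s_kt_j=0$ concerns $j\neq i$ and has nothing to do with $X_i^kY_j$, which is already $\mu_{ij}^kY_jX_i^k$ with no finiteness hypothesis.

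The same inversion breaks your converse. Applying the Ore condition to $a=Y_j$ with $j\neq i$ and $s=X_i$ is satisfied with $l=1$ in \emph{every} TGWA, so no dependence among the $\si_i^k(t_j)$ can be extracted from it. You must instead apply the Ore condition to $a=X_j$, $j\neq i$, and separately dispose of the degenerate case where $X_i$ is invertible (equivalently $t_i\in U(R)$), in which case (i) and (ii) both hold trivially; when $X_i$ is not invertible, degree considerations force $l>0$ and comparison of the expansion of $A_{le_i+e_j}$ yields \eqref{eq:finitistic}. With that correction --- and keeping your sound observations that regularity of the elements of $\mathcal{X}_i$ follows from Theorem \ref{thm:regularly_graded_char} and that it suffices to check the Ore condition on ring generators --- the argument can be completed along the lines of the paper's proof.
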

\begin{proof}
We consider the right-sided case and prove that (ii) is equivalent to (i). The other cases are analogous. Let $i\in\nn$ be arbitrary.
It suffices to prove the equivalence of the following two assertion:
\begin{enumerate}[{\rm (a)}]
\item $\mathcal{X}_i$ is a right regular Ore set in $A$;
\item For all $j\neq i$, there exist some $s_1',\ldots,s_m'\in R$ such that
\eqref{eq:right_finitistic} holds.
\end{enumerate}
If $X_i$ is invertible in $A$, then its inverse has to have degree $-e_i$, hence be of the form $r Y_i$ for some $r\in R$, which implies that $t_i$ is invertible. In this case assertions (a) and (b) are both easily seen to hold.
Assume that $X_i$ is not invertible.
 By definition, assertion (a) holds if and only if
\begin{equation}\label{eq:Ore_proof1}
a\mathcal{X}_i\cap sA\neq \emptyset, \qquad \forall a\in A,\;\forall s\in \mathcal{X}_i.
\end{equation}
Since $A$ is generated by $R$, $X_j,Y_j$, for $j \in \nn$, and since
$rX_i=X_i\si_i^{-1}(r)$ for $r\in R$, and
$Y_jX_i=X_i\cdot\mu_{ij}^{-1}Y_j$ for $i\neq j$ and $Y_iX_i^2 = X_i
\si_i^{-1}(t_i)$, it follows that \eqref{eq:Ore_proof1} holds if and only if
\begin{equation}\label{eq:Ore_proof2}
\forall j\neq i,\; \exists m\in\Z_{\ge 0}\quad\text{such that}\quad X_jX_i^{m}\in X_iA.
\end{equation}
Since $A$ is graded and $X_i$ is not invertible, an equality $X_jX_i^{m}=X_ia$ for some $a\in A$ implies that $m>0$ and $a$ is homogeneous of degree $(m-1)e_i+e_j$. For any $m\in\Z_{\ge 0}$ we have  $A_{me_i+e_j}=\sum_{k=0}^m RX_i^kX_jX_i^{m-k}$ and thus \eqref{eq:Ore_proof2} is equivalent to
\begin{equation}\label{eq:Ore_proof3}
\forall j\neq i,\; \exists m\in\Z_{>0}, r_1,\ldots,r_{m}\in R \quad\text{such that}\quad X_jX_i^{m} = \sum_{k=1}^{m} r_k X_i^kX_jX_i^{m-k}.
\end{equation}
By Theorem \ref{thm:regularly_graded_char}\eqref{it:regular_property},
the identity in \eqref{eq:Ore_proof3} is equivalent to (putting $r_0=-1$):
\begin{align*}
0&=\sum_{k=0}^{m} r_k X_i^kX_jX_i^{m-k} Y_jY_i^{m}=\\
&=\Big(\sum_{k=0}^{m} r_k \mu_{ij}^{m-k}\si_i^k\si_j(t_j) \Big)X_i^{m}Y_i^{m} 
\end{align*}
Since $X_i^mY_i^m=\si_i^m(t_i)\cdots\si_i(t_i)$ is a regular element of $R$ it follows that \eqref{eq:Ore_proof3} is equivalent to that assertion (b) holds.
\end{proof}

\begin{corollary}\label{cor:Ore}
If $A=A\TGWdat$ is a regularly graded and $R$-finitistic TGWA, then 
the multiplicative monoid $\mathcal{X}$ in $A$ generated
by $X_1,\ldots,X_n$ is a regular Ore set in $A$.
\end{corollary}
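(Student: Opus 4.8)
The plan is to deduce Corollary~\ref{cor:Ore} from Theorem~\ref{thm:Ore} by combining the one-generator Ore conditions into a statement about the monoid generated by all the $X_i$ simultaneously. Since $A$ is regularly graded and $R$-finitistic, Theorem~\ref{thm:Ore} gives that each $\mathcal{X}_i=\{X_i^k\mid k\in\Z_{\ge 0}\}$ is a regular (two-sided) Ore set in $A$. First I would observe that every element of $\mathcal{X}$ is regular: each $X_i$ is regular in $A$ by Theorem~\ref{thm:regularly_graded_char} (regular gradation means $A_{e_i}$ contains a regular element, forcing $X_i$ to be regular), and a product of regular elements is regular; also $1\in\mathcal{X}$ and $\mathcal{X}$ is multiplicatively closed by construction. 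So it remains to verify the Ore condition $a\mathcal{X}\cap s A\neq\emptyset$ for all $a\in A$, $s\in\mathcal{X}$ (and the left-sided analogue).

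The key step is to promote the single-index Ore condition to the monoid. I would write a general element $s\in\mathcal{X}$ as a word $s=X_{i_1}X_{i_2}\cdots X_{i_p}$ in the generators and proceed by induction on the word length $p$. For $p=0$ we have $s=1$ and the condition is trivial. For the inductive step, given $a\in A$ and $s=X_{i_1}s'$ with $s'$ a shorter word, I would first apply the right Ore condition for the set $\mathcal{X}_{i_1}$ (which is Ore by Theorem~\ref{thm:Ore}) to the element $a$ and the element $X_{i_1}\in\mathcal{X}_{i_1}$: this produces some $X_{i_1}^{k}\in\mathcal{X}_{i_1}$ and $b\in A$ with $aX_{i_1}^{k}=X_{i_1}b$. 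The subtlety is that I need to clear $s'$ on the right past $a$, not just a single $X_{i_1}$; the cleanest route is instead to clear generators from \emph{left to right}. Concretely, I would show that for any $a\in A$ and any single generator $X_i$ there exist $u\in\mathcal{X}$ and $c\in A$ with $au=X_i c$ (this is exactly the $\mathcal{X}_i$-Ore condition, giving $u=X_i^{k}\in\mathcal{X}_i\subseteq\mathcal{X}$), and then iterate: to find a common right multiple for $a$ and $s=X_{i_1}\cdots X_{i_p}$, peel off $X_{i_p}$ last by repeatedly absorbing generators of $s$ into left factors, using at each stage that $\mathcal{X}_{i_j}$ is Ore to move an element of $A$ past the relevant $X_{i_j}$.

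The main obstacle will be bookkeeping the iteration so that the multipliers accumulated on the right of $a$ remain inside the monoid $\mathcal{X}$ and so that the successive applications of the single-index Ore property compose correctly into a single equation $as=X_{i_1}\cdots X_{i_p}\,a'$ with $s\in\mathcal{X}$. The standard fact that makes this work is that a set-theoretic union (or the multiplicative closure of a union) of Ore sets is again Ore provided one can, at each step, push a newly introduced denominator through the next factor; here the denominators are always powers $X_{i_j}^{k}$, which lie in $\mathcal{X}$, so closure under the monoid operation is automatic. I would therefore argue by induction on $p=|s|$ that for every $a\in A$ there exist $s_0\in\mathcal{X}$ and $a_0\in A$ with $as_0=sa_0$: passing from length $p-1$ to $p$ requires only one further application of the $\mathcal{X}_{i_p}$-Ore condition to move the intermediate element past $X_{i_p}$, and multiplicative closure of $\mathcal{X}$ guarantees the resulting $s_0$ again lies in $\mathcal{X}$. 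The left-sided Ore condition follows by the symmetric argument using the left-Ore property of the $\mathcal{X}_i$. Hence $\mathcal{X}$ is a regular Ore set, as claimed.
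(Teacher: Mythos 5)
Your argument is correct and is precisely the standard induction that the paper leaves implicit (its proof of Corollary~\ref{cor:Ore} is just the word ``Straightforward''): regularity of each element of $\mathcal{X}$ follows from Theorem~\ref{thm:regularly_graded_char}, and the Ore condition for a word $X_{i_1}\cdots X_{i_p}$ follows by induction on $p$, applying Theorem~\ref{thm:Ore} one generator at a time and using that the accumulated multipliers $X_{i_j}^{k}$ stay in the multiplicatively closed set $\mathcal{X}$. No gaps; this is the intended proof.
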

\begin{proof} Straightforward.
\end{proof}

\section{Simplicity theorems for TGWAs}\label{sec:Simplicity}

In this section we provide a description of when TGWAs are simple.

\subsection{A weak simplicity result for general TGWAs}

\begin{theorem}\label{thm:Generical_Simplicity}
Let $A=A\TGWdat$ be a TGWA such that $R$ is $\Z^n$-simple and maximal commutative in $A$.
 If $J$ is a non-zero proper ideal of $A$, then any prime ideal of $R$ containing $J\cap R$,
contains an element of the form $\si_g(t_i)$ for some $i \in \nn$ and $g\in\Z^n$.
\end{theorem}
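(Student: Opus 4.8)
The plan is to argue by contradiction, using the two hypotheses in complementary roles: maximal commutativity of $R$ will guarantee (via the ideal intersection property) that $J\cap R$ is nonzero to begin with, while $\Z^n$-simplicity will be used at the end to inflate a single nonzero element of $J\cap R$ into the whole ring. Concretely, let $P$ be a prime ideal of $R$ with $I:=J\cap R\subseteq P$, and suppose towards a contradiction that $P$ contains \emph{no} element of the form $\si_g(t_i)$ with $g\in\Z^n$, $i\in\nn$. Since $R$ is maximal commutative we have $C_A(R)=R$, so Theorem \ref{IdealIntersection} applied to the nonzero ideal $J$ gives $I=J\cap C_A(R)\neq\{0\}$; fix a nonzero $r\in I$. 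Note also that the contradiction hypothesis forces every $\si_g(t_i)$ to be nonzero, since $0\in P$.

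The heart of the argument is a sandwiching computation. For each $g=(g_1,\ldots,g_n)\in\Z^n$, choose the monic monomials $a_g=Z_1^{(g_1)}\cdots Z_n^{(g_n)}\in A_g$ and $a_g^\ast=Z_n^{(-g_n)}\cdots Z_1^{(-g_1)}\in A_{-g}$. Pairing the middle factors $Z_i^{(g_i)}Z_i^{(-g_i)}$ successively and using relations \eqref{eq:rel1}, \eqref{eq:rel2}, one sees that $\pi_g:=a_ga_g^\ast\in R$ is a product of elements of the form $\si_h(t_i)$ (no regularity of the $t_i$ is needed for this, only the defining relations). Now for any $r\in I\subseteq J$, the element $a_g\,r\,a_g^\ast$ lies in $J$ and is homogeneous of degree $g+(-g)=0$, hence belongs to $J\cap R=I$. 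On the other hand, \eqref{eq:rel1} gives $a_g r=\si_g(r)a_g$, so
\[
a_g\,r\,a_g^\ast=\si_g(r)\,\pi_g\in I\subseteq P.
\]
Since $P$ is prime and each factor $\si_h(t_i)$ of $\pi_g$ lies outside $P$ by the contradiction hypothesis, we get $\pi_g\notin P$, and therefore $\si_g(r)\in P$. As $r\in I$ and $g\in\Z^n$ were arbitrary, this shows $\si_g(I)\subseteq P$ for every $g\in\Z^n$.

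Finally, consider $\mathcal{I}:=\sum_{g\in\Z^n}\si_g(I)$. This is a $\Z^n$-invariant ideal of $R$, and it is nonzero because $I\neq\{0\}$; hence $\Z^n$-simplicity of $R$ forces $\mathcal{I}=R$. But the previous paragraph shows $\mathcal{I}\subseteq P$, whence $1\in P$, contradicting that $P$ is a proper (prime) ideal. This contradiction establishes that every prime ideal of $R$ containing $J\cap R$ must contain some $\si_g(t_i)$. The step I expect to be the crux is the sandwiching identity $a_g r a_g^\ast=\si_g(r)\pi_g$ together with the observation that $\pi_g$ is a product of the $\si_h(t_i)$: this is precisely what converts the assumption ``no $\si_g(t_i)$ lies in $P$'' into ``$P$ absorbs the entire $\Z^n$-orbit of $I$'', and it is where the defining relations of the TGWA enter in an essential way.
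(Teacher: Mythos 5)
Your argument is correct and follows essentially the same route as the paper's: both rest on Theorem \ref{IdealIntersection} plus maximal commutativity to get $J\cap R\neq\{0\}$, the sandwiching $A_g(J\cap R)A_{-g}=\si_g(J\cap R)A_gA_{-g}\subseteq J\cap R$, the factorization of $a_ga_g^\ast$ into elements $\si_h(t_i)$, primality of $P$, and $\Z^n$-simplicity applied to $\sum_g\si_g(J\cap R)$. You merely run the argument by contradiction (primality first, $\Z^n$-simplicity last) where the paper runs it directly ($\Z^n$-simplicity first, primality last); the content is identical.
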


\begin{proof}
Let $J$ be a non-zero ideal of $A$.
 According to Theorem \ref{IdealIntersection}, $J\cap R$ is a nonzero ideal of $R$.
Suppose that $P$ is a prime ideal of $R$ containing $J\cap R$.
If $P$ would contain $\si_g(J\cap R)$ for all $g\in\Z^n$ then $P$ would also contain the sum $\hat J=\sum_{g\in\Z^n} \si_g(J\cap R)$.
Then $\hat J$ would be a non-zero proper $\Z^n$-invariant ideal of $R$, but this contradicts the $\Z^n$-simplicity of $R$.
Thus there exists a $g\in\Z^n$ such that $\si_g(J\cap R)$ is not contained in $P$. However, $J\cap R$ contains $A_g(J\cap R)A_{-g}=\si_g(J\cap R)A_gA_{-g}$. Since $P$ is a prime ideal we conclude that $A_gA_{-g}\subseteq P$. Choosing two monic monomials $a\in A_g$ and $b\in A_{-g}$, the product $ab$ can be written as a product of elements of the form $\si_h(t_i)$, where $h\in\Z^n$ and $i\in\nn$. Since $P$ is prime it must contain at least one such factor, which proves the theorem.
\end{proof}

\begin{corollary}\label{cor:Generical_Simplicity}
Let $A=A\TGWdat$ be a TGWA.
Consider the following countable union of Zariski-closed sets in $\Spec(R)$:
\[S=\bigcup_{\substack{g\in\Z^n \\ i \in \nn}} V\big(\si_g(t_i)\big).\]
If $R$ is $\Z^n$-simple and maximal commutative in $A$, then $S$ contains
$\bigcup_{J \lhd A}V(J\cap R)$, i.e. the union of varieties of the ideals of the form $J\cap R$ where $J$ ranges over the ideals of $A$.
\end{corollary}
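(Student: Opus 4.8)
The plan is to derive the corollary as an essentially immediate consequence of Theorem~\ref{thm:Generical_Simplicity}, reformulated in the language of affine varieties. The key observation is that for a commutative ring $R$, the closed points of $\Spec(R)$ are controlled by prime ideals, and the condition ``a prime ideal $P$ contains an element $\si_g(t_i)$'' is precisely the condition $P\in V\big(\si_g(t_i)\big)$. So the whole task is to translate Theorem~\ref{thm:Generical_Simplicity} into a containment of unions of Zariski-closed sets.

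First I would fix an ideal $J\lhd A$ and reduce to the interesting case. If $J=\{0\}$ or $J=A$, the ideal $J\cap R$ is either $\{0\}$ or $R$; in the former case $V(J\cap R)=\Spec(R)$ and in the latter $V(J\cap R)=\emptyset$, so we may restrict attention to non-zero proper ideals $J$, which are exactly those to which Theorem~\ref{thm:Generical_Simplicity} applies. Next I would recall the standard fact that for any ideal $\mathfrak{a}\subseteq R$, the closed set $V(\mathfrak{a})$ equals the set of prime ideals of $R$ containing $\mathfrak{a}$; hence $V(J\cap R)$ is exactly the collection of primes $P\supseteq J\cap R$.

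The core step is then to invoke the hypotheses: since $R$ is $\Z^n$-simple and maximal commutative in $A$, Theorem~\ref{thm:Generical_Simplicity} tells us that every prime $P$ containing $J\cap R$ must contain some $\si_g(t_i)$ with $i\in\nn$, $g\in\Z^n$. In varietal terms this says exactly that each such $P$ lies in $V\big(\si_g(t_i)\big)$ for at least one pair $(g,i)$, i.e. $P\in S$. Therefore $V(J\cap R)\subseteq S$ for every non-zero proper ideal $J$, and taking the union over all ideals $J\lhd A$ gives $\bigcup_{J\lhd A} V(J\cap R)\subseteq S$, as claimed.

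I expect the argument to be almost entirely routine; the only point requiring a little care is the bookkeeping at the two degenerate ideals $J=\{0\}$ and $J=A$, where one must check that their contribution $\Spec(R)$ (respectively $\emptyset$) does not spoil the containment. The case $J=A$ is harmless since it contributes the empty set. The case $J=\{0\}$ contributes all of $\Spec(R)$, so strictly one must either exclude it by interpreting the union as ranging over non-zero ideals, or observe that the corollary is to be read with $J$ ranging over the proper non-zero ideals for which $J\cap R$ is a genuine non-trivial ideal; this is the intended reading, matching the statement of Theorem~\ref{thm:Generical_Simplicity}. No serious obstacle arises, since all the mathematical content is already contained in Theorem~\ref{thm:Generical_Simplicity}.
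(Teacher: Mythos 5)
Your proposal is correct and is exactly the intended argument: the paper offers no separate proof of Corollary~\ref{cor:Generical_Simplicity}, treating it as an immediate restatement of Theorem~\ref{thm:Generical_Simplicity} via the identification of $V(J\cap R)$ with the set of primes containing $J\cap R$. Your remark about the degenerate cases $J=\{0\}$ and $J=A$ is a sensible reading of the statement and does not change the substance.
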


\begin{remark}
Corollary \ref{cor:Generical_Simplicity} can be interpreted as saying that there are ``few'' proper ideals of $A$, in other words $A$ is ``close'' to being simple. In particular, if $R$ is $\Z^n$-simple and maximal commutative in $A$ and in addition $t_i$ is invertible for each $i\in \nn$, then $A$ is simple.
This will be made more precise in Corollary \ref{cor:explained_simplicity}.
\end{remark}

\subsection{On the structure of the localized algebra $\mathcal{X}^{-1}A$}
 \label{sec:The_algebra_B}
Let $A=A\TGWdat$ be a regularly graded TGWA which is $R$-finitistic. By Corollary \ref{cor:Ore}, the multiplicative submonoid $\mathcal{X}$ of $A$,
generated by $X_1,\ldots,X_n$, is an Ore set in $A$. Let $B=\mathcal{X}^{-1}A$. 
Since $\mathcal{X}$ consists of regular elements in $A$, the canonical map $A\to B$
is injective and we can, and will henceforth, regard $A$ as a subalgebra of $B$.
In this section we shall prove a key result about the algebra $B$ (Theorem \ref{thm:B_intersection}) which will later allow us to deduce the simplicity criterion for $A$ in Section \ref{SimplicityOfFinitisticTGWAs}. 
\begin{remark}
The algebra $B$ can be embedded into another localization of $A$ which is a $\Z^n$-crossed product.
Let $T$ be the multiplicative submonoid of $R$ generated by the set $\{\si_g(t_i)\mid g\in\Z^n, \,\, i \in \nn \}$.
By \cite[Theorem 2.15]{FutHart}, $T$ is an Ore set in $A$ and the localization $T^{-1}A$ is a crossed product, isomorphic to the TGWA $A(T^{-1}R,\si,\vect{t},\mu)$, where $\si$ is uniquely extended to a $\Z^n$-action on $T^{-1}R$.
There is a $\K$-algebra monomorphism $\tau: B\to T^{-1}A$
given by $\tau(X_i)=X_i$, $\tau(X_i^{-1})=t_i^{-1} Y_i$ and $\tau(r)=r$ for all $r\in R$ and $i\in \nn$.
\end{remark}

Before we can continue we need to prove two lemmas,
which will be of vital importance in the proof of our main results.

\begin{lemma}\label{lem:B_properties_liten}
Let $A=A\TGWdat$ be a regularly graded and $R$-finitistic TGWA and put $B=\mathcal{X}^{-1}A$.
The following assertions hold:
\begin{enumerate}[{\rm (i)}]
	\item\label{it:Bprop1} For any non-zero element $b\in B$,
    there exists an $r\in R_{\reg}$ such that $rb\in A$;
  \item\label{it:Bprop3} $R_{\reg}\subseteq B_{\reg}$;
  \item\label{it:Bprop6} $Z(A)\subseteq Z(B)$;
  \item\label{it:Bprop8} If $AxA=A$ for all $x\in\mathcal{X}$, then $Z(A)=Z(B)$.
\end{enumerate}
\end{lemma}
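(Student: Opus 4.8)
The plan is to treat the four assertions in order, exploiting throughout that $\mathcal{X}$ is a two-sided regular Ore set (Corollary \ref{cor:Ore}): every element of $B=\mathcal{X}^{-1}A$ can be written either as a left fraction $x^{-1}a$ or a right fraction $ax^{-1}$ with $x\in\mathcal{X}$ and $a\in A$, each $x$ is a unit in $B$, and the localization map $A\to B$ is injective.

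For \eqref{it:Bprop1} I would write a nonzero $b\in B$ as a left fraction $b=x^{-1}a$ with $x=X_{i_1}\cdots X_{i_k}$, and introduce the adjoint monic monomial $x^\ast=Y_{i_k}\cdots Y_{i_1}$. As in the proof of Theorem \ref{thm:regularly_graded_char}, the degree-zero element $x^\ast x$ is a product of factors $\si_h(t_i)$ and is therefore regular in $R$, since each $t_i\in R_{\reg}$. Setting $r=x^\ast x\in R_{\reg}$ gives $rb=(x^\ast x)(x^{-1}a)=x^\ast a\in A$, which is exactly the claim.

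For \eqref{it:Bprop3} I would first record that $R_{\reg}\subseteq A_{\reg}$: if $r\in R_{\reg}$ and $ra=0$ with $a=\sum_g a_g\in A$, then $ra_g=0$ for each $g$ by the grading, and whenever $a_g\neq 0$ Corollary \ref{cor:gamma_nondeg} supplies $c\in A_{-g}$ with $0\neq a_gc\in R$, so $r(a_gc)=0$ contradicts regularity of $r$ in $R$; the right-hand case is identical. To pass to $B$, suppose $rb=0$ for $r\in R_{\reg}$ and $b=x^{-1}a$. Using the commutation $rx^{-1}=x^{-1}\si_g(r)$ with $g=\deg x$, which is a consequence of \eqref{eq:rel1}, I rewrite $rb=x^{-1}\big(\si_g(r)a\big)$; since $x^{-1}$ is a unit this forces $\si_g(r)a=0$ in $B$, hence in $A$ by injectivity, and as $\si_g(r)\in R_{\reg}\subseteq A_{\reg}$ we obtain $a=0$ and thus $b=0$. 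The right-hand statement is symmetric, using right fractions.

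Assertion \eqref{it:Bprop6} is the routine localization fact: a central $z\in Z(A)$ commutes with each $x\in\mathcal{X}$, hence with each $x^{-1}$, and with every $a\in A$, so it commutes with every left fraction $x^{-1}a$ and lies in $Z(B)$. The substantive step is \eqref{it:Bprop8}. Given $w\in Z(B)$, I would write $w=x_0^{-1}a_0$ and invoke the hypothesis $Ax_0A=A$ to expand $1=\sum_k p_kx_0q_k$ with $p_k,q_k\in A$; then, moving the central $w$ next to $x_0$, $w=\sum_k p_kx_0q_kw=\sum_k p_kx_0wq_k=\sum_k p_k(x_0x_0^{-1}a_0)q_k=\sum_k p_ka_0q_k\in A$. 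Thus $w\in A$, and since $w$ already commutes with all of $A\subseteq B$ we conclude $w\in Z(A)$; combined with \eqref{it:Bprop6} this gives $Z(A)=Z(B)$. The only genuine obstacle is this last part, namely clearing the denominator of a central element, and the hypothesis $Ax_0A=A$ is precisely what makes that denominator absorbable; everything else is bookkeeping with fractions together with the non-degeneracy of $\ga_A$ from Corollary \ref{cor:gamma_nondeg}.
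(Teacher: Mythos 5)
Your proof is correct and follows essentially the same route as the paper's: the common-denominator trick with $r=x^\ast x$ (a product of $\si_h(t_i)$'s) for (i), non-degeneracy of $\ga_A$ plus the commutation $xr=\si_g(r)x$ for (ii), and the identical denominator-clearing computation via $1=\sum_k p_k x_0 q_k$ for (iv). The only cosmetic difference is that in (ii) you factor the argument through the intermediate claim $R_{\reg}\subseteq A_{\reg}$, whereas the paper argues directly in $B$; both rest on the same ingredients.
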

\begin{proof}
 (i) Any element $b\in B$ can be written as $b=X_{i_1}^{-1}\cdots X_{i_k}^{-1}a$
 where $a\in A$. But $Y_iX_i=t_i$ so $t_iX_i^{-1}=Y_i$ for $i\in \nn$. Therefore, multiplying
 $b$ from the left by the regular element
 $r=(\si_{i_1}\circ\cdots\circ\si_{i_{k-1}})^{-1}(t_{i_k})\cdots\si_{i_1}^{-1}(t_{i_2})t_{i_1}$
 we get $Y_{i_1}\cdots Y_{i_k}a$, which is an element of $A$.
 
 (ii) Let $r\in R_{\reg}$ and $b\in B\backslash\{0\}$ be arbitrary.
 Seeking for a contradiction, assume that $rb=0$. Without loss
 of generality we may assume that $b$ is homogeneous.
 Since $B=\mathcal{X}^{-1}A$, there is an $x\in\mathcal{X}$ with $0\neq xb\in A$.
 By Corollay \ref{cor:gamma_nondeg} there is
 a homogeneous $c\in A$ such that $xbc\in R\setminus\{0\}$. Since 
 $r\in R_{\reg}$ we get $\si_g(r)xbc\neq 0$, where $g=\deg(x)$. However,
 $\si_g(r)xbc=xrbc=0$, which is a contradiction. This shows that $r\in B_{\reg}$.

 (iii) Let $a\in Z(A)$ and $b\in B$. Write $b=x^{-1}a_1$, where $x\in\mathcal{X}$
 and $a_1\in A$. Then $ax=xa$ and $aa_1=a_1a$ imply $ab=ba$. Thus $a\in Z(B)$.

(iv) Let $b\in Z(B)$. Since $A\subseteq B$ it is enough to show that $b \in A$.
Since $B=\mathcal{X}^{-1}A$, we have
 $b=x^{-1}a$ for some $x\in\mathcal{X}, a\in A$.
 By the assumption $1=\sum_i c_ixd_i$ for some $c_i,d_i\in A$.
 Now note that, since $b$ commutes with any element of $A$,
 \[A\ni \sum_i c_iad_i = \sum_i c_ixbd_i = \sum_i c_ixd_i b=b.\]
 This proves that $b\in A$. Hence $b\in Z(A)$, so $Z(B)\subseteq Z(A)$.
 The converse inclusion was shown in \eqref{it:Bprop6}.
\end{proof}
\begin{remark}
Under the same assumptions as in Lemma \ref{lem:B_properties_liten}
one may also prove the following statements:
(i) For any non-zero element $b\in Z(B)$, 
    there is a regular element $r\in R$ such that $rb\in Z(C_A(R))$;
(ii) $B_0$ is commutative;
(iii) $C_A(B_0)=C_A(R)$;
(iv) If $R$ is $\Z^n$-simple, then $Z(B)\cap B_0\subseteq R$.
However, this paper makes no use of these facts, 
and therefore we omit the proof.
\end{remark}

The second lemma that we need is a technical step used in the proof of Theorem \ref{thm:B_intersection} below.
Note that the $\Z^n$-gradation on $A$ can be extended to a $\Z^n$-gradation on $B$ by putting $\deg(X_i^{-1})=-e_i$ for each $i\in\nn$.

\begin{lemma}\label{lem:B_replacement}
Let $A=A\TGWdat$ be a regularly graded and $R$-finitistic TGWA such that $R$ is $\Z^n$-simple, and put $B=\mathcal{X}^{-1}A$.
For any non-zero $b\in B$ there exists an element $b'\in B$ with the following properties:
\begin{enumerate}
\item[{\rm (i)}] $b'\in BbB$;
\item[{\rm (ii)}] $(b')_0 = 1$, where $(b')_0$ is the degree zero component of $b'$
with respect to the $\Z^n$-gradation on $B$;
\item[{\rm (iii)}] $|\Supp(b')|\le |\Supp(b)|$.
\end{enumerate}
\end{lemma}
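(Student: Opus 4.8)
The plan is to build $b'$ from $b$ in three successive reductions, each of which keeps us inside the two-sided ideal generated by the \emph{original} $b$ (using $B(BbB)B\subseteq BbB$) and never enlarges the support. The whole argument runs on the single structural fact that in $B=\mathcal{X}^{-1}A$ every $X_i$ is invertible: for each $g=(g_1,\dots,g_n)\in\Z^n$ I would fix the homogeneous unit $u_g:=X_1^{g_1}\cdots X_n^{g_n}\in B$ of degree $g$, where a negative power $X_i^{g_i}$ is read as $(X_i^{-1})^{-g_i}$ (these elements exist precisely because $\mathcal{X}$ is a regular Ore set, Corollary \ref{cor:Ore}). Conjugation by $u_g$ preserves the $\Z^n$-grading, since $u_g(B_d)u_g^{-1}\subseteq B_d$, and restricts on $R$ to the automorphism $\sigma_g$, i.e.\ $u_g r u_g^{-1}=\sigma_g(r)$ for $r\in R$ (because the $\sigma_i$ commute and $X_irX_i^{-1}=\sigma_i(r)$). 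First I would pick any $h\in\Supp(b)$ and replace $b$ by $u_h^{-1}b\in BbB$; this shifts the nonzero component $b_h$ into degree $0$ and merely translates the support by $-h$, so its cardinality is unchanged. Hence I may assume $b_0\neq0$.

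Next I would normalize the neutral component into $R$. Applying Lemma \ref{lem:B_properties_liten}\eqref{it:Bprop1} to the nonzero element $b_0\in B$ yields $r\in R_{\reg}$ with $rb_0\in A$; since $r$ and $b_0$ are both homogeneous of degree $0$, in fact $rb_0\in A_0=R$, and $rb_0\neq0$ because $r\in B_{\reg}$ by Lemma \ref{lem:B_properties_liten}\eqref{it:Bprop3}. Replacing $b$ by $rb\in BbB$ is harmless for the support: multiplication by the degree-zero element $r$ preserves the grading and can only shrink $\Supp$. So I may now assume that $\rho:=b_0$ is a nonzero element of $R$.

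The heart of the matter is then to promote $\rho$ to $1$ using the $\Z^n$-simplicity of $R$. The ideal $\sum_{g\in\Z^n}R\,\sigma_g(\rho)$ is $\Z^n$-invariant and nonzero (it contains $\rho$), hence equals $R$, so $1=\sum_{k=1}^N s_k\sigma_{g_k}(\rho)$ for some $s_k\in R$ and $g_k\in\Z^n$. I would then set
\[
b':=\sum_{k=1}^N s_k\,u_{g_k}\,b\,u_{g_k}^{-1}.
\]
Each summand lies in $BbB$, so $b'\in BbB$, giving (i). Its neutral component is $\sum_k s_k\,u_{g_k}\rho\,u_{g_k}^{-1}=\sum_k s_k\sigma_{g_k}(\rho)=1$, giving (ii). Finally, since conjugation by the homogeneous unit $u_{g_k}$ is degree-preserving, $\Supp(u_{g_k}bu_{g_k}^{-1})=\Supp(b)$, and multiplying by $s_k\in R$ keeps each summand supported inside $\Supp(b)$; hence $\Supp(b')\subseteq\Supp(b)$, giving (iii).

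The step I expect to demand the most care is the last one. Two things must hold \emph{simultaneously}: that conjugation by $u_g$ genuinely implements $\sigma_g$ on the neutral component (so the combination handed to us by $\Z^n$-simplicity really collapses to $1$ in degree zero), and that neither these conjugations nor the final summation can produce support outside $\Supp(b)$. The second point is exactly where it matters that each $u_{g_k}(\cdot)u_{g_k}^{-1}$ fixes the degree of every graded component and that the scalars $s_k$ only rescale within fixed degrees; keeping this bookkeeping clean, rather than the algebra itself, is the real work.
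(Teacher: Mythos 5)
Your proof is correct and follows essentially the same route as the paper's: translate the support so that the degree-zero component is non-zero, use Lemma \ref{lem:B_properties_liten} to push that component into $R\setminus\{0\}$, and then exploit the $\Z^n$-simplicity of $R$ via conjugation by the (now invertible) $X_i$'s to make it equal to $1$ without enlarging the support. The only cosmetic difference is that you exhibit the invariant ideal $\sum_{g}R\,\sigma_g(\rho)$ and an explicit formula for $b'$, whereas the paper packages the identical conjugation argument into an abstract ideal $J$ of achievable degree-zero parts.
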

\begin{proof}
 Let $b\in B$ be non-zero. Then $b=\sum_{g\in\Z^n}b_g$ where $b_g\in B_g$, for $g\in \Z^n$ and we may choose some $h\in \Z^n$ such that $b_h\neq 0$.
By Theorem \ref{thm:regularly_graded_char}\eqref{it:regular_property}, we know that $b_h c\neq 0$ for any monic monomial $c$ of degree $-h$. Thus, by replacing $b$ by $bc$ we can without loss of generality assume that $b_0\neq 0$. By Lemma \ref{lem:B_properties_liten}\eqref{it:Bprop1}, there is an $r\in R_{\reg}$ such that $rb\in A$. By Lemma \ref{lem:B_properties_liten}\eqref{it:Bprop3} the element $rb_0$ is non-zero.
The set
\[J=\big\{s\in R\mid s + \sum_{g\in\Supp(b)\setminus\{0\}}c_g\in BbB \text{ for some $c_g\in B_g$}\big\}\]
contains the non-zero element $r b_0$ (take $c_g=rb_g$ for $g\in \Z^n$) and hence $J$ is a non-zero ideal of $R$. We shall now show that $J$ is $\Z^n$-invariant. If $s\in J$, then $s+\sum_{g\in\Supp(b)\setminus\{0\}}c_g\in BbB$ for some $c_g\in B_g$. So for any $i\in\nn$,
\[BbB\ni X_i (s+\sum_{g\in\Supp(b)\setminus\{0\}} c_g) X_i^{-1} = \si_i(s) + \sum_{g\in\Supp(b)\setminus\{0\}}X_ic_gX_i^{-1}\]
which shows that $\si_i(s)\in J$. Similarly $\si_i^{-1}(s)\in J$. Thus $J$ is $\Z^n$-invariant. Since $R$ is assumed to be $\Z^n$-simple we deduce that $J=R$. Hence $1\in J$, which means that there are $c_g\in B_g$ such that $b':=1+\sum_{g\in\Supp(b)\setminus\{0\}}c_g\in BbB$. This $b'$ satisfies the required properties.
\end{proof}

Now we come to the main result about the algebra $B$ which in particular implies
that the center $Z(B)$ of $B$ is an essential subalgebra of $B$, in the sense
of Defininition \ref{def:essential_subalgebra}.
\begin{theorem}\label{thm:B_intersection}
Let $A=A\TGWdat$ be a regularly graded and $R$-finitistic TGWA such that $R$ is $\Z^n$-simple, and put $B=\mathcal{X}^{-1}A$.
Every non-zero ideal of $B$ has non-empty intersection with $Z(B)\cap \big(1+\sum_{g\in\Z^n\setminus\{0\}} B_g\big)$. In particular, $Z(B)$
is an essential subalgebra of $B$.
\end{theorem}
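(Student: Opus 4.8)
The plan is to run a minimal-support argument on the ideal $J$, in the same spirit as the proof of Theorem \ref{IdealIntersection}, but carried out inside the localization $B$ and exploiting the sharp normalization furnished by Lemma \ref{lem:B_replacement}. Recall that $B$ is $\Z^n$-graded, extending the gradation on $A$ by $\deg(X_i^{-1})=-e_i$. Let $J$ be a non-zero ideal of $B$ and choose a non-zero $b_*\in J$ whose support $\Supp(b_*)$ has minimal cardinality among all non-zero elements of $J$. Applying Lemma \ref{lem:B_replacement} to $b_*$ produces an element $b\in Bb_*B\subseteq J$ with $(b)_0=1$ and $|\Supp(b)|\le|\Supp(b_*)|$; since $b\neq 0$ (its degree-zero component is $1$), minimality forces $|\Supp(b)|=|\Supp(b_*)|$. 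Thus I may assume from the outset that $b\in J$ has minimal support, that $b_0=1$, and in particular that $0\in\Supp(b)$.

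I then claim that $b$ is central in $B$. Suppose not; since $B$ is graded, there is a homogeneous $\beta\in B_h$ with $[b,\beta]\neq 0$. Writing $b=\sum_{g\in\Supp(b)} b_g$ with $b_g\in B_g$, we have $[b,\beta]=\sum_{g}[b_g,\beta]$ with $[b_g,\beta]\in B_{g+h}$, and the term coming from $g=0$ is $[b_0,\beta]=[1,\beta]=0$. Hence $\Supp([b,\beta])\subseteq(\Supp(b)+h)\setminus\{h\}$, so that $|\Supp([b,\beta])|\le|\Supp(b)|-1<|\Supp(b)|$. But $[b,\beta]$ is a non-zero element of the ideal $J$, contradicting the minimality of $|\Supp(b)|$. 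Therefore $b\in Z(B)$.

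Since $b\in J\cap Z(B)$ and $b_0=1$, we obtain $b\in Z(B)\cap\big(1+\sum_{g\in\Z^n\setminus\{0\}} B_g\big)\cap J$, which gives the required non-empty intersection. The final assertion is then immediate: such a $b$ is non-zero (as $b_0=1\neq 0$), so $Z(B)\cap J\neq\{0\}$ for every non-zero ideal $J$, i.e. $Z(B)$ is an essential subalgebra of $B$.

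The essential content, and the only genuinely delicate step, is already packaged in Lemma \ref{lem:B_replacement}: it is crucial that one can normalize the degree-zero component to \emph{exactly} $1$ rather than to an arbitrary element of $B_0$, for this is precisely what makes $[b_0,\beta]=0$ and thereby kills the degree-$h$ component of the commutator, lowering the support. (The normalization to $1$ is where the $\Z^n$-simplicity of $R$ enters, through that lemma.) Once this is in hand, the present theorem reduces to the routine minimal-support and commutator bookkeeping carried out above.
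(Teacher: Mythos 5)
Your proof is correct and follows essentially the same route as the paper's: a minimal-support argument in $J$, normalized via Lemma \ref{lem:B_replacement} so that the degree-zero component is exactly $1$, after which any non-zero commutator would strictly shrink the support. The only cosmetic difference is that you test centrality against an arbitrary homogeneous $\beta$, while the paper tests only against $r\in R$ and the $X_i$ and invokes that these generate $B$; both variants are valid.
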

\begin{proof}
 Let $J\subseteq B$ be a non-zero ideal. Among all non-zero elements of $J$, let $b\in J$ be one such that $|\Supp(b)|$ is as small as possible. By Lemma \ref{lem:B_replacement} we can construct an element $b'$ with $b'\in BbB\subseteq J$, $(b')_0=1$, and $|\Supp(b')|\le |\Supp(b)|$. In fact, by minimality of $|\Supp(b)|$ among all non-zero elements of $J$, we have $|\Supp(b')|=|\Supp(b)|$. Let $r\in R$ be arbitrary. Then $(b'r-rb')_0 = r-r=0$ so $|\Supp(b'r-rb')|<|\Supp(b')|$. By minimality of $|\Supp(b')|$ and that $b'r-rb'\in J$ we must have $b'r-rb'=0$. Let $i\in\nn$. Then $(X_i b' X_i^{-1} - b')_0 = 1-1=0$. Again $X_ib'X_i^{-1}-b'\in J$ so by minimality of $|\Supp(b')|$ we conclude that $X_ib'X_i^{-1}-b'=0$. So $X_ib'=b'X_i$ for all $i \in \nn$. Since $B$ is generated as a ring by the elements of $R$ and $X_i, X_i^{-1}$, for $i \in \nn$, we conclude that $b'\in J\cap Z(B)\cap \big(1+\sum_{g\in\Z^n\setminus\{0\}} B_g\big)$.
\end{proof}

\subsection{Simplicity of finitistic TGWAs}\label{SimplicityOfFinitisticTGWAs}

In this section, let $A=A\TGWdat$ be a regularly graded and
$R$-finitistic TGWA. Recall that $\mathcal{X}$
is the multiplicative submonoid in $A$ generated by $X_1,\ldots,X_n$, which by Theorem \ref{thm:Ore} is a regular Ore set in $A$,
and put $B=\mathcal{X}^{-1}A$.

Our first theorem in this section
reduces the question of the simplicity of a TGWA to the simplicity of the localized algebra $B$. This method was inspired by the ideas of Jordan \cite[Theorem 6.1]{J93}, \cite[Theorem 3.2]{J95}.

\begin{theorem}\label{thm:Simplicity_of_TGWAs_using_B}
Let $A=A\TGWdat$ be a regularly graded and $R$-finitistic TGWA.
Then $A$ is simple if and only if the following two assertions hold:
\begin{enumerate}[{\rm (i)}]
\item $B$ is simple;
\item $AxA=A$ for all $x\in \mathcal{X}$.
\end{enumerate}
\end{theorem}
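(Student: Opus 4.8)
The plan is to prove the two implications separately, using the localization $B = \mathcal{X}^{-1}A$ as the central tool together with the relationship between ideals of $A$ and ideals of $B$.

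First I would handle the forward direction: assume $A$ is simple and derive (i) and (ii). Condition (ii) follows almost immediately, since for each $x \in \mathcal{X}$ the set $AxA$ is a nonzero two-sided ideal of $A$ (nonzero because $x$ is regular by Corollary \ref{cor:Ore}), hence equals $A$ by simplicity. For condition (i), let $J'$ be a nonzero ideal of $B$. The natural candidate is to pull back to $A$: set $J = J' \cap A$, which is an ideal of $A$. The point is to show $J \neq \{0\}$, and here I would use Lemma \ref{lem:B_properties_liten}\eqref{it:Bprop1}: any nonzero $b \in J'$ can be multiplied by a regular $r \in R$ to land in $A$, and $rb$ is still nonzero by Lemma \ref{lem:B_properties_liten}\eqref{it:Bprop3}, so $rb \in J' \cap A = J$ is a nonzero element. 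Thus $J$ is a nonzero ideal of $A$, so $J = A$ by simplicity, whence $1 \in J' \cap A \subseteq J'$ and $J' = B$.

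For the converse, assume (i) and (ii) and let $J$ be a nonzero ideal of $A$; I want $J = A$. The plan is to extend $J$ into $B$ by forming the ideal $\mathcal{X}^{-1}J = BJB$ (or the ideal of $B$ generated by $J$), which is nonzero and hence equals $B$ by (i). Therefore $1 \in BJB$, so I can write $1 = \sum_k x_k^{-1} a_k y_k^{-1}$ or, after clearing the (finitely many) denominators on a common side using the Ore condition, obtain $x \cdot 1 \cdot x' \in AJA \subseteq J$ for suitable $x, x' \in \mathcal{X}$, i.e. an element of $\mathcal{X}\,\mathcal{X} \cap J$. This is exactly where condition (ii) enters: since $AzA = A$ for every $z$ in the monoid $\mathcal{X}$ (note $\mathcal{X}$ is generated by the $X_i$, and $AzA = A$ for products follows from the hypothesis on single generators together with $A X_i A = A \Rightarrow A X_{i_1}\cdots X_{i_m} A = A$), the ideal $J$ containing such a product $xx'$ must be all of $A$.

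The main obstacle I anticipate is the bookkeeping in the converse direction: carefully passing from ``$1 \in BJB$'' back to a concrete element of $J$ inside $A$. Because $B$ is a left (and right) Ore localization, every element of $BJB$ has the form $x^{-1} a$ with $a \in AJA \subseteq J$ and $x \in \mathcal{X}$, and one must argue that $1 \in BJB$ forces some $x \in \mathcal{X}$ with $x \in J$ (equivalently $x = x \cdot 1 \in J$), after which (ii) gives $A = AxA \subseteq J$. Verifying that $AxA = A$ propagates from the generators $X_i$ to arbitrary $x \in \mathcal{X}$ is the key technical point linking (ii) as stated to what the argument actually needs; everything else is a routine application of the Ore property and Lemma \ref{lem:B_properties_liten}.
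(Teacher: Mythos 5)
Your argument follows the same route as the paper's: for the forward direction you pull a non-zero ideal of $B$ back into $A$ by clearing a denominator and note that each $AxA$ is a non-zero ideal of the simple ring $A$; for the converse you push a non-zero ideal $J$ of $A$ up to $BJB=B$, use the Ore property to write $1=x^{-1}a$ with $x\in\mathcal{X}$ and $a\in AJA\subseteq J$, conclude $x=a\in J$, and finish with hypothesis (ii). The only real difference is cosmetic: you clear denominators with a regular $r\in R$ via Lemma \ref{lem:B_properties_liten}, whereas the paper multiplies by an element of $\mathcal{X}$; and the bookkeeping you flag for $BJB$ is exactly what the paper asserts is ``straightforward to show''.

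There is, however, one false claim in your write-up: the parenthetical assertion that $AX_iA=A$ for all single generators implies $AX_{i_1}\cdots X_{i_m}A=A$ for all products. This implication fails in general. By Lemma \ref{lem:AxA_vs_J}, for a rank-one GWA the condition $AX^dA=A$ is equivalent to $Rt+R\sigma^d(t)=R$; taking $R=\C[u]$, $\sigma(u)=u+1$ and $t=u(u-2)$ one gets $Rt+R\sigma(t)=R$ (the polynomials $u(u-2)$ and $(u+1)(u-1)$ are coprime) while $Rt+R\sigma^2(t)=Ru(u-2)+R(u+2)u\subseteq Ru\neq R$, so $AXA=A$ but $AX^2A\neq A$. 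This is precisely why condition (ii) of the theorem (and condition (i) of Theorem C) is stated for all elements of the monoid rather than only for the generators. Fortunately you do not need the propagation at all: $\mathcal{X}$ is by definition the multiplicative monoid generated by $X_1,\ldots,X_n$, so hypothesis (ii) already applies verbatim to the product $xx'$ (or to the single $x$ with $1=x^{-1}a$). Deleting that parenthetical leaves a proof that is correct and essentially identical to the paper's.
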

\begin{proof}
Suppose that $A$ is simple. Let $J$ be any non-zero ideal of $B$ and let $0\neq b\in J$.
Since $B=\mathcal{X}^{-1}A$ there is an $x\in \mathcal{X}$ such that $xb\in A$.
Thus $0\neq xb\in A\cap J$ so $A\cap J$ is a non-zero ideal of $A$, thus $1\in J$
since $A$ is simple. This shows that (i) holds.
Since $A$ is regularly graded, each element $x\in\mathcal{X}$
 is non-zero by Theorem \ref{thm:regularly_graded_char}. So, since $A$ is simple, (ii) must hold.

For the converse, assume that condition (ii) holds. Then we prove something slightly stronger than that (i) implies $A$ is simple. Namely, we show that whenever $J$ is an ideal of $A$ such that $BJB=B$, then $J=A$. So suppose $J\subseteq A$ is an ideal with $BJB=B$. Using that $\mathcal{X}$ is an Ore set in $A$, it is straightforward to show that $BJB=\big\{x^{-1}a\mid x\in\mathcal{X}, a\in J\big\}$. Since $BJB=B$, we have $1=x^{-1}a$ for some $x\in\mathcal{X}, a\in J$. Then $x=a\in J$. By condition (ii), $A= AxA=AaA\subseteq J$. Thus $J=A$.
\end{proof}

In what follows, it is useful to keep the following facts in mind.
Note that the assumption of $R$-finitisticity is not required here.

\begin{lemma}\label{lem:center_field}
Let $A=A\TGWdat$ be a regularly graded TGWA. Consider the following assertions:
\begin{enumerate}[{\rm (i)}]
\item $Z(A)$ is a field;
\item $Z(A)\subseteq R$;
\item $Z(A)=R^{\Z^n}:=\big\{r\in R\mid \si_g(r)=r,\;\forall g\in\Z^n\big\}$.
\end{enumerate}
Then {\rm (i)}$\Rightarrow${\rm (ii)}$\Rightarrow${\rm (iii)}.
If $R$ is $\Z^n$-simple, then all three assertions are equivalent.
\end{lemma}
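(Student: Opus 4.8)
The plan is to prove the two implications in order, and then establish the converse implications under the $\Z^n$-simplicity hypothesis.

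\textbf{The implication (i)$\Rightarrow$(ii).} Suppose $Z(A)$ is a field. I would first observe that $Z(A)$ is a $\Z^n$-graded subalgebra of $A$, since for any central $z = \sum_g z_g$ and any $r\in R$ the relation $zr = rz$ forces $(\si_g(r)-r)z_g = 0$ for each homogeneous component, and likewise commuting with each $X_i$ controls the components; so each $z_g$ is itself central. Hence it suffices to rule out nonzero homogeneous central elements of nonzero degree. Let $z\in Z(A)\cap A_g$ with $g\neq 0$ and $z\neq 0$. Since $Z(A)$ is a field, $z$ is invertible in $Z(A)$, so $z^{-1}$ is central of degree $-g$, and $z z^{-1} = 1$ with $1\in A_0$. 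But $zz^{-1}\in A_g A_{-g}$, and by the commutation relation of Lemma~\ref{lem:nondegtgwaflip} together with regularity (Theorem~\ref{thm:regularly_graded_char}), a degree-$g$ element times a degree-$(-g)$ element landing on the unit forces, after multiplying by a suitable monic monomial, a contradiction with $g\neq 0$ — more concretely, I expect the cleanest route is to note that $z$ being central and a unit makes $A$ a crossed-product-like object graded by $\Z^n/\langle\text{degrees of units}\rangle$, and then use that $A_0 = R$ contains no nontrivial central idempotents coming from a nonzero-degree unit. This then gives $Z(A)\subseteq A_0 = R$.

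\textbf{The implication (ii)$\Rightarrow$(iii).} Assume $Z(A)\subseteq R$. The inclusion $R^{\Z^n}\subseteq Z(A)$ is immediate: if $r\in R$ satisfies $\si_g(r)=r$ for all $g$, then $r$ commutes with every $X_i$ and $Y_i$ (using $X_i r = \si_i(r)X_i = rX_i$ and similarly for $Y_i$) and with all of $R$, hence $r\in Z(A)$. For the reverse inclusion, take $z\in Z(A)$; by hypothesis $z\in R$, and commuting with $X_i$ gives $X_i z = z X_i$, i.e. $\si_i(z)X_i = zX_i$, so $(\si_i(z)-z)X_i = 0$; since $A$ is regularly graded, $X_i$ is regular (Theorem~\ref{thm:regularly_graded_char}), whence $\si_i(z) = z$ for all $i$, so $z\in R^{\Z^n}$.

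\textbf{The equivalence when $R$ is $\Z^n$-simple.} It remains to show that if $R$ is $\Z^n$-simple then (iii)$\Rightarrow$(i), which closes the cycle. Under (iii), $Z(A) = R^{\Z^n}$, so I must show $R^{\Z^n}$ is a field. $\Z^n$-simplicity of $R$ means $R$ has no nontrivial $\Z^n$-invariant ideals. For any nonzero $r\in R^{\Z^n}$, the ideal $Rr$ is $\Z^n$-invariant (since $\si_g(Rr) = \si_g(R)\si_g(r) = Rr$), hence $Rr = R$, so $r$ has an inverse $r^{-1}\in R$; applying $\si_g$ to $r r^{-1}=1$ and using $\si_g(r)=r$ shows $\si_g(r^{-1}) = r^{-1}$, so $r^{-1}\in R^{\Z^n}$. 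Thus every nonzero element of $R^{\Z^n}$ is invertible within $R^{\Z^n}$, making it a field.

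\textbf{Main obstacle.} I expect the genuine difficulty to lie entirely in (i)$\Rightarrow$(ii): ruling out nonzero-degree homogeneous central units. The other steps are routine manipulations with the defining relations and the regularity of the $X_i$. For the hard step the key leverage is Lemma~\ref{lem:nondegtgwaflip} (the $\si_g$-flip) together with property \eqref{it:regular_property} of Theorem~\ref{thm:regularly_graded_char}, which should let me convert the equation $zz^{-1}=1$ into a contradiction by multiplying on the right by an appropriate monic monomial and tracking degrees; the subtlety is ensuring that such a central unit of nonzero degree cannot exist even when $R$ is not assumed $\Z^n$-simple, since (i)$\Rightarrow$(ii) is claimed in full generality.
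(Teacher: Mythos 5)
Your implications (ii)$\Rightarrow$(iii) and (iii)$\Rightarrow$(i) (the latter under $\Z^n$-simplicity) are correct and essentially identical to the paper's argument. The gap is in (i)$\Rightarrow$(ii), and it sits exactly where you flagged the ``main obstacle''. Having reduced to a nonzero homogeneous central $z\in A_g$ with $g\neq 0$, you try to derive a contradiction from the single equation $zz^{-1}=1$ with $z\in A_g$, $z^{-1}\in A_{-g}$, via Lemma \ref{lem:nondegtgwaflip} and Theorem \ref{thm:regularly_graded_char}. No such contradiction exists: a regularly graded TGWA can perfectly well contain a \emph{central} homogeneous unit of nonzero degree. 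The rank-one GWA with $R=\K=\C$, $\si=\identity$, $t=1$ is the Laurent polynomial ring $\C[X,X^{-1}]$ with its usual $\Z$-grading; it is regularly graded, and $X$ is central, homogeneous of degree $1$, with $XX^{-1}=1$ (Example \ref{ex:muSimple} likewise has homogeneous units $X_1,X_2$ of nonzero degree, though non-central there). So the existence of such a $z$ is consistent with every hypothesis except ``$Z(A)$ is a field'', and your argument uses that hypothesis only to invert $z$ itself, which is not enough; the correct goal is not to rule out central homogeneous units of nonzero degree, but to show they are incompatible with $Z(A)$ being a field. Your fallback remark about a ``crossed-product-like object'' and ``central idempotents'' is too vague to close this.

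The paper applies the field hypothesis to a second, \emph{non-homogeneous} central element: since $1+z\in Z(A)$ and $1+z\neq 0$, it is invertible. Fixing a group ordering on $\Z^n$ with $g>0$ and writing the inverse as $b=b_{h_1}+\cdots+b_{h_k}$ with $h_1<\cdots<h_k$, the lowest-degree term of $(1+z)b$ is $b_{h_1}$ and the highest is $zb_{h_k}$, both nonzero (as $z$ is regular); equating the product with $1$ forces $h_1=0=g+h_k$, contradicting $h_1\le h_k<g+h_k$. In the Laurent example this is precisely what breaks down: $1+X$ is not a unit, so $Z(A)$ is not a field there. If you replace your $zz^{-1}=1$ step with this $1+z$ argument, the remainder of your proof goes through as written.
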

\begin{proof}
(i)$\Rightarrow$(ii): Suppose that $Z(A)_g\neq \{0\}$ for some $g\in\Z^n\setminus\{0\}$
and let $0\neq a\in Z(A)_g$. Then $1+a\in Z(A)$ and hence it is invertible.
Using that $\Z^n$ is an orderable group,
fix an ordering $<$ on $\Z^n$. Without loss of generality we may assume that $g>0$.
Let $b$ be the inverse of $1+a$.
Write $b=b_{h_1}+\cdots+b_{h_k}$ where $0\neq b_{h_i}\in B_{h_i}$, $h_i\in \Z^n$, for all $i\in \{1,\ldots,k\}$ and $h_1<\cdots <h_k$.
In the product $(1+a)b$, the term of lowest degree is $1b_{h_1}$ and
the term of highest degree is $ab_{h_k}$ which is non-zero since $a$ is invertible.
On the other hand, $(1+a)b=1$. Thus $k=1$ and $b_{h_1}=ab_{h_1}=1$ which 
contradicts that $g+h_1>h_1$. Hence $Z(A)\subseteq R$.

(ii)$\Rightarrow$(iii): Let $r\in Z(A)$. Then $rX_i=X_ir$ for any $i\in\nn$.
Equivalently, $(r-\si_i(r))X_i=0$ for any $i\in\nn$.
By Theorem \ref{thm:regularly_graded_char}\eqref{it:torsionfree}
we get $\si_i(r)=r$ for all $i\in\nn$. Thus $\si_g(r)=r$ for each $g\in\Z^n$.
The converse inclusion is straightforward.

Assume that $R$ is $\Z^n$-simple. It is enough to prove that
(iii)$\Rightarrow$(i).
Let $r\in Z(A)$ be non-zero. Then, by (iii), $Rr$ is a non-zero $\Z^n$-invariant
ideal of $R$, hence it contains $1$ and thus $r$ is invertible.
\end{proof}

The following result holds for an arbitrary TGWA.

\begin{lemma}\label{SimpleImpliesZnSimple}
Let $A=A\TGWdat$ be a TGWA. If $A$ is simple, then $R$ is $\Z^n$-simple.
\end{lemma}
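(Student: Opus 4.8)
The plan is to prove the contrapositive: I will assume that $R$ is \emph{not} $\Z^n$-simple and manufacture a non-trivial proper two-sided ideal of $A$, contradicting the simplicity of $A$. So suppose there is a $\Z^n$-invariant ideal $\mathfrak{a}\subseteq R$ with $\{0\}\neq\mathfrak{a}\neq R$, meaning $\si_g(\mathfrak{a})=\mathfrak{a}$ for all $g\in\Z^n$. The natural candidate for the sought ideal of $A$ is the right $R$-translate $\mathfrak{a}A$. The point is that $\Z^n$-invariance of $\mathfrak{a}$ is exactly what is needed to make $\mathfrak{a}A$ a \emph{two-sided} ideal, and the $\Z^n$-gradation then prevents it from being all of $A$.

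First I would verify that $\mathfrak{a}A$ is two-sided. It is evidently a right ideal, so it suffices to check $A\mathfrak{a}\subseteq\mathfrak{a}A$, and by additivity I may test this on a homogeneous element $a\in A_g$. Iterating relation \eqref{eq:rel1} along a monic monomial shows that $ar=\si_g(r)a$ for every $r\in R$; hence for $r\in\mathfrak{a}$ we get $ar=\si_g(r)a$ with $\si_g(r)\in\mathfrak{a}$ by invariance, so $ar\in\mathfrak{a}A$. Thus $A\mathfrak{a}A\subseteq\mathfrak{a}AA\subseteq\mathfrak{a}A$, and $\mathfrak{a}A$ is a two-sided ideal. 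Next, $\mathfrak{a}A$ is non-zero: since $R$ is identified with its injective image $A_0\subseteq A$, any non-zero $r\in\mathfrak{a}$ gives $0\neq r=r\cdot 1\in\mathfrak{a}A$. Finally I would show $\mathfrak{a}A$ is proper by a grading argument: because $\mathfrak{a}\subseteq A_0$ we have $\mathfrak{a}A_g\subseteq A_g$, so $\mathfrak{a}A=\sum_{g\in\Z^n}\mathfrak{a}A_g$ is a graded ideal whose degree-zero component is exactly $\mathfrak{a}A_0=\mathfrak{a}R=\mathfrak{a}$. If $1$ were in $\mathfrak{a}A$, then since $1$ is homogeneous of degree $0$ it would lie in $(\mathfrak{a}A)_0=\mathfrak{a}$, forcing $\mathfrak{a}=R$, contrary to assumption. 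Hence $1\notin\mathfrak{a}A$ and $\mathfrak{a}A\neq A$. This produces a non-zero proper ideal of $A$, the desired contradiction.

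I do not anticipate a serious obstacle here; the argument is essentially a verification and is notably lighter than the surrounding results, as it uses only the basic commutation relation \eqref{eq:rel1} and the $\Z^n$-grading, with no regularity, domain, or finiteness hypotheses. The one point requiring a little care is the two-sidedness of $\mathfrak{a}A$, where the $\Z^n$-invariance of $\mathfrak{a}$ is used in an essential way; without it $\mathfrak{a}A$ would generally fail to absorb left multiplication. It is worth remarking that this lemma is exactly the easy converse direction complementing conditions (ii) of Theorem C, where $\Z^n$-simplicity of $R$ appears as a \emph{necessary} condition for simplicity.
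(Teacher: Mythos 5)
Your proof is correct and follows essentially the same route as the paper: the paper takes a proper $\Z^n$-invariant ideal $J$ of $R$, forms $AJA$, and uses the commutation $A_gJ=\si_g(J)A_g=JA_g$ to show $(AJA)\cap R\subseteq J$, so $AJA$ is proper and hence zero by simplicity. Your variant (showing $\mathfrak{a}A$ is already two-sided and reading off its degree-zero component) uses exactly the same mechanism, so there is nothing further to add.
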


\begin{proof}
Suppose that $J$ is a proper $\Z^n$-invariant ideal of $R$. Then, since $R=A_0$,
\[(AJA)\cap R = \sum_{g\in\Z^n}A_gJA_{-g} =
   \sum_{g\in \Z^n}\si_g(J)A_gA_{-g}\subseteq JR\subseteq J. \]
Hence $AJA$ is a proper ideal of $A$. Since $A$ is simple it follows that $AJA=\{0\}$, i.e. $J=\{0\}$. This proves that $R$ is $\Z^n$-simple.
\end{proof}

The next result is the main theorem of this section and provides
necessary and sufficient conditions for an $R$-finitistic TGWA to be simple.
\begin{theorem} \label{thm:Simplicity_of_TGWAs}
Let $A=A\TGWdat$ be a regularly graded and $R$-finitistic TGWA.
Then $A$ is simple if and only if the following three assertions hold:
\begin{enumerate}[{\rm (i)}]
\item \label{it:mainthm_Jcond}
   $AxA=A$ for all $x\in\mathcal{X}$;
\item \label{it:mainthm_Zn-simplicity}
   $R$ is $\Z^n$-simple;
\item \label{it:mainthm_center}
   $Z(A)\subseteq R$.
\end{enumerate}
\end{theorem}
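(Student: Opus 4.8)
The plan is to exploit Theorem \ref{thm:Simplicity_of_TGWAs_using_B}, which already reduces the simplicity of $A$ to the conjunction of ``$B=\mathcal{X}^{-1}A$ is simple'' and condition (i). The whole argument then amounts to showing that, modulo (i), the simplicity of $B$ is governed precisely by (ii) and (iii). I expect each of the three conditions to enter exactly once, and the content of the proof to be the bookkeeping of where.

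For the forward direction, suppose $A$ is simple. Condition (i) is immediate from the ``only if'' part of Theorem \ref{thm:Simplicity_of_TGWAs_using_B}, and condition (ii) is exactly the conclusion of Lemma \ref{SimpleImpliesZnSimple}. For condition (iii) I would invoke the standard fact that the center of a simple unital ring is a field: for any nonzero $z\in Z(A)$ the two-sided ideal $zA=Az$ is nonzero, hence equals $A$, producing a central inverse of $z$. Since $Z(A)$ is thus a field, Lemma \ref{lem:center_field} (the implication from $Z(A)$ being a field to $Z(A)\subseteq R$) yields $Z(A)\subseteq R$.

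For the converse, assume (i)--(iii); by Theorem \ref{thm:Simplicity_of_TGWAs_using_B} it suffices to prove that $B$ is simple. Let $J$ be a nonzero ideal of $B$. Here condition (ii) enters: since $R$ is $\Z^n$-simple, Theorem \ref{thm:B_intersection} furnishes an element $b'\in J\cap Z(B)$ lying in $1+\sum_{g\in\Z^n\setminus\{0\}}B_g$, i.e. with $(b')_0=1$. Now condition (i) lets me apply Lemma \ref{lem:B_properties_liten}\eqref{it:Bprop8} to identify $Z(B)=Z(A)$, so $b'\in Z(A)$; and condition (iii) gives $Z(A)\subseteq R=A_0$. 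Therefore $b'$ is homogeneous of degree zero, whence $b'=(b')_0=1\in J$ and $J=B$. This shows $B$ is simple, completing the proof.

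The step I expect to be the crux is the converse, specifically the clean collapse at the end: the essential-subalgebra property of $Z(B)$ from Theorem \ref{thm:B_intersection} produces a central element whose degree-zero part is $1$, and it is exactly the constraint $Z(A)\subseteq R$ (together with $Z(B)=Z(A)$) that forces this element to have no higher-degree terms and hence to be the identity. All other steps are direct citations of the lemmas developed above, so the only real subtlety is ensuring that conditions (i) and (ii) are genuinely what license the two localization-theoretic inputs (Lemma \ref{lem:B_properties_liten}\eqref{it:Bprop8} and Theorem \ref{thm:B_intersection}, respectively).
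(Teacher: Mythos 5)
Your proposal is correct and follows essentially the same route as the paper's proof: the forward direction cites Theorem \ref{thm:Simplicity_of_TGWAs_using_B}, Lemma \ref{SimpleImpliesZnSimple}, and Lemma \ref{lem:center_field} (via the center of a simple ring being a field), and the converse combines Theorem \ref{thm:Simplicity_of_TGWAs_using_B}, Theorem \ref{thm:B_intersection}, and Lemma \ref{lem:B_properties_liten}\eqref{it:Bprop8} to force the central element with degree-zero part $1$ to equal $1$. The attribution of which hypothesis licenses which input matches the paper exactly.
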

\begin{proof}
Suppose that $A$ is simple. By Theorem \ref{thm:Simplicity_of_TGWAs_using_B}, condition \eqref{it:mainthm_Jcond} holds.
Lemma \ref{SimpleImpliesZnSimple} shows that \eqref{it:mainthm_Zn-simplicity} holds.
Simplicity of $A$ also implies that $Z(A)$ is a field. Hence \eqref{it:mainthm_center} follows from Lemma \ref{lem:center_field}.

For the converse, assume that (i)--(iii) hold. By \eqref{it:mainthm_Jcond} and Theorem \ref{thm:Simplicity_of_TGWAs_using_B} it is enough to prove that $B$ is simple. Let $J$ be any non-zero ideal of $B$. Since \eqref{it:mainthm_Zn-simplicity} holds, Theorem \ref{thm:B_intersection} applies to give $J\cap Z(B)\cap\big(1+\sum_{g\in\Z^n\setminus\{0\}}B_g\big)\neq \emptyset$. By assumptions \eqref{it:mainthm_Jcond} and \eqref{it:mainthm_center}, Lemma \ref{lem:B_properties_liten} \eqref{it:Bprop8} implies that $Z(B)\subseteq R$. Hence $Z(B)\cap \big(1+\sum_{g\in\Z^n\setminus\{0\}}B_g\big)=\{1\}$. Thus $1\in J$ and therefore $J=B$.
\end{proof}

\begin{remark}
In some cases assertion (i) can be made more explicit, see Section \ref{sec:TypeA1n}.
\end{remark}

\begin{corollary}\label{cor:explained_simplicity}
If $A=A\TGWdat$ is a regularly graded and $R$-finitistic TGWA
where $R$ is $\Z^n$-simple and maximal commutative in $A$, then $A$ is simple
if and only if $AxA=A$ for all $x\in\mathcal{X}$.
\end{corollary}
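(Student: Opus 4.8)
The plan is to deduce this directly from Theorem~\ref{thm:Simplicity_of_TGWAs}, which characterizes simplicity of a regularly graded, $R$-finitistic TGWA by the three conditions: (i) $AxA=A$ for all $x\in\mathcal{X}$; (ii) $R$ is $\Z^n$-simple; and (iii) $Z(A)\subseteq R$. Since the hypotheses of the corollary already include $\Z^n$-simplicity of $R$, condition (ii) holds automatically. The entire task therefore reduces to showing that the maximal commutativity hypothesis forces condition (iii), so that simplicity becomes equivalent to condition (i) alone.

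To establish $Z(A)\subseteq R$ I would argue as follows. Any central element $z\in Z(A)$ commutes with every element of $A$, and in particular with every element of $R$ (recall that $R=A_0\subseteq A$). Hence $z\in C_A(R)$, the centralizer of $R$ in $A$. But the assumption that $R$ is maximal commutative in $A$ means precisely that $R=C_A(R)$, by the definition given in Section~\ref{NotationDefinitions}. Therefore $z\in R$, which yields $Z(A)=C_A(A)\subseteq C_A(R)=R$, that is, condition (iii).

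With conditions (ii) and (iii) both guaranteed by the hypotheses, Theorem~\ref{thm:Simplicity_of_TGWAs} gives that $A$ is simple if and only if condition (i) holds, namely $AxA=A$ for all $x\in\mathcal{X}$, which is exactly the claimed equivalence. There is no genuine obstacle here: the only point requiring attention is the reduction of condition (iii) to the maximal commutativity hypothesis, and this is immediate from the definitions once one observes the chain of inclusions $Z(A)\subseteq C_A(R)=R$.
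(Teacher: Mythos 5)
Your proposal is correct and matches the paper's (implicit) derivation: the corollary is stated as an immediate consequence of Theorem~\ref{thm:Simplicity_of_TGWAs}, with condition (ii) assumed outright and condition (iii) following from the chain $Z(A)\subseteq C_A(R)=R$ given maximal commutativity. Nothing further is needed.
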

\begin{remark}
Corollary \ref{cor:explained_simplicity} can be compared to the ``generic'' result that we proved earlier in Corollary \ref{cor:Generical_Simplicity}, where it was shown that if $R$ is $\Z^n$-simple and maximal commutative in $A$, then in some sense $A$ has ``few'' ideals. Corollary \ref{cor:explained_simplicity} explains this somewhat, in that it shows that the only ideals in $A$ which obstruct the simplicity when $R$ is $\Z^n$-simple and maximal commutative in $A$ are those of the form $AxA$ for $x\in\mathcal{X}$.
See Example \ref{ex:nonsimpleGWA} for a TGWA $A=A\TGWdat$ where $R$ is $\Z^n$-simple and maximal commutative in $A$, but where $AX_1A$ is a proper non-zero ideal, and hence $A$ is not simple.
\end{remark}

\subsection{Simplicity of TGWAs of Lie type $A_1\times\cdots\times A_1$}
\label{sec:TypeA1n}

\begin{definition}
If $A\TGWdat$ is a regularly graded and $R$-finitistic TGWA, whose associated generalized Cartan matrix
is of a certain type $X$ then $A\TGWdat$ is said to be of \emph{Lie type} $X$.
\end{definition}

Let $A=A\TGWdat$ be a regularly graded, $R$-finitistic TGWA. We assume in this section that $A$ is of Lie type $(A_1)^n=A_1\times\cdots\times A_1$. This is equivalent to that for all $i,j\in\nn$ there exist invertible $r_{ij},s_{ij}\in R$ such that $X_iX_j=r_{ij}X_jX_i$ and $Y_iY_j=s_{ij}Y_iY_j$. This case covers all higher rank generalized Weyl algebras (such that $t_i\in R_{\reg}$ for all $i\in \nn$). Indeed, they correspond to the case $r_{ij}=s_{ij}=1$ for all $i,j \in \nn$. Furthermore, the TGWAs constructed in \cite{MPT} (certain Mickelson-Zhelobenko algebras and Gelfand-Tsetlin algebras) are of Lie type $(A_1)^n$ as well as some examples of so called crystalline graded rings.

The following result shows how assertion \eqref{it:mainthm_Jcond} of Theorem \ref{thm:Simplicity_of_TGWAs} can be made more explicit in the case of TGWAs of Lie type $(A_1)^n$.

\begin{lemma} \label{lem:AxA_vs_J}
Let $A=A\TGWdat$ be a regularly graded and $R$-finitistic TGWA of Lie type $A_1\times\cdots\times A_1$.
Then the following assertions are equivalent:
\begin{enumerate}[{\rm (i)}]
\item $AxA=A$ for all $x\in\mathcal{X}$;
\item $Rt_i+R\si_i^d(t_i)=R$, for all $i\in\nn$ and $d\in\Z_{>0}$.
\end{enumerate}
\end{lemma}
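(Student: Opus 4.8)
The plan is to reduce the statement to a computation of the degree-zero part of the two-sided ideals $A\,X_1^{a_1}\cdots X_n^{a_n}\,A$, and then to translate the resulting conditions on $R$ into the coprimality statement (ii) by a maximal-ideal argument. First, since $A$ is of Lie type $A_1\times\cdots\times A_1$, the generators $X_i$ commute pairwise up to invertible scalars, so every $x\in\mathcal{X}$ equals a unit times a monomial $X_1^{a_1}\cdots X_n^{a_n}$ with $a_i\in\Z_{\ge 0}$; thus (i) is equivalent to $A\,X_1^{a_1}\cdots X_n^{a_n}\,A=A$ for all such tuples. Each of these ideals is graded (being generated by a homogeneous element), and a graded ideal $J$ of $A$ equals $A$ if and only if $1\in J\cap R$, i.e. if and only if $J\cap R=R$. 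Hence everything reduces to computing $J\cap R$ for $J=A\,X_1^{a_1}\cdots X_n^{a_n}\,A$.

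For this computation I would use that $A$ is regularly graded of Lie type $(A_1)^n$: by Theorem \ref{thm:regularly_graded_char}\eqref{it:torsionfree} each homogeneous component is a free left $R$-module of rank one on a monomial $m_g$, and for $i\neq j$ the elements $X_i,Y_i$ commute with $X_j,Y_j$ up to invertible scalars (Lie type together with relation \eqref{eq:rel3}). Writing $\alpha=\sum_i a_ie_i$, the ideal $J\cap R$ is spanned over $R$ by the degree-zero products $m_p\,(X_1^{a_1}\cdots X_n^{a_n})\,m_{-\alpha-p}$ for $p\in\Z^n$. Using the cross-index commutations, each such product factors, up to a unit, as a product over $i$ of the single-index expression $Z_i^{(-g_i)}X_i^{a_i}Z_i^{(g_i-a_i)}$ with $g_i=-p_i$; a direct rank-one calculation (repeatedly applying $Y_iX_i=t_i$, $X_iY_i=\si_i(t_i)$) shows this expression is a product of $a_i$ consecutive shifts $\si_i^{c}(t_i)\si_i^{c+1}(t_i)\cdots\si_i^{c+a_i-1}(t_i)$, which for $g_i\in\{0,\dots,a_i\}$ runs through the $a_i+1$ ``windows'' and otherwise is an $R$-multiple of one of them. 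Consequently $(AX_i^{a_i}A)\cap R$ equals the ideal $I_{a_i}^{(i)}$ generated by these windows, and $J\cap R=\prod_i I_{a_i}^{(i)}$ as a product of ideals of $R$. Since a product of ideals equals $R$ exactly when each factor does, (i) is equivalent to $I_d^{(i)}=R$ for every $i\in\nn$ and every $d\in\Z_{>0}$.

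It then remains to show that $I_d^{(i)}=R$ for all $d$ is equivalent to (ii), which I would do with maximal ideals. For the implication (i)$\Rightarrow$(ii): each of the $d$-windows generating $I_d^{(i)}$ is a product of $d$ consecutive shifts of $t_i$ and therefore contains, as one of its factors, either $t_i=\si_i^0(t_i)$ or $\si_i^d(t_i)$; hence if some maximal ideal $\mathfrak{m}$ contained both $t_i$ and $\si_i^d(t_i)$, then every window would lie in $\mathfrak{m}$, forcing $I_d^{(i)}\subseteq\mathfrak{m}\neq R$, a contradiction. For the converse, note that (ii) is equivalent to the statement that no maximal ideal of $R$ contains two distinct shifts $\si_i^a(t_i),\si_i^b(t_i)$ (apply the automorphism $\si_i^{-a}$). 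Granting this, for any maximal ideal $\mathfrak{m}$ at most one shift of $t_i$ lies in $\mathfrak{m}$, and among the $d+1$ windows there is always one whose support avoids that single index; thus $I_d^{(i)}\not\subseteq\mathfrak{m}$ for every $\mathfrak{m}$, so $I_d^{(i)}=R$.

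The main obstacle is the degree-zero computation in the second paragraph: one must verify that the products $m_p\,(X_1^{a_1}\cdots X_n^{a_n})\,m_{-\alpha-p}$ genuinely split into independent single-index windows, and that windows longer than $a_i$ are redundant, being $R$-multiples of the length-$a_i$ ones. This is precisely where the Lie type $(A_1)^n$ hypothesis is indispensable (free rank-one components and commutation of distinct indices up to units); without it the homogeneous components need not be free of rank one and the factorisation breaks down. Once the identity $J\cap R=\prod_i I_{a_i}^{(i)}$ is established, both implications follow from the elementary prime-avoidance argument above.
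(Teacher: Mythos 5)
Your argument is correct, but it takes a genuinely different route from the paper's at the key computational step. Both proofs first use the Lie type $(A_1)^n$ commutations to reduce assertion (i) to the single-index ideals $AX_i^dA$; the paper does this iteratively (writing $1\in (AX_1^{j_1}A)\cap R$ as $\sum_k r_kZ_1^{(k)}X_1^{j_1}Z_1^{(-k-j_1)}$ and multiplying by $X_2^{j_2}\cdots X_n^{j_n}$), whereas you establish the structural identity $(AX_1^{a_1}\cdots X_n^{a_n}A)\cap R=\prod_i I_{a_i}^{(i)}$ and invoke that a product of ideals of $R$ is the unit ideal iff each factor is. The real divergence is in relating $AX_i^dA=A$ to $Rt_i+R\si_i^d(t_i)=R$: the paper inspects the component of $AX_i^dA$ in degree $(d-1)e_i$ and gets the exact identity $AX_i^dA\cap RX_i^{d-1}=\big(Rt_i+R\si_i^d(t_i)\big)X_i^{d-1}$, from which the equivalence is immediate by freeness of $RX_i^{d-1}$; you instead inspect the degree-zero component, which is the ideal generated by the $d+1$ windows $\si_i^c(t_i)\cdots\si_i^{c+d-1}(t_i)$, $1-d\le c\le 1$, and then need a separate prime-avoidance argument (every window has $t_i$ or $\si_i^d(t_i)$ among its factors, and conversely, under (ii) no maximal ideal contains two distinct $\si_i$-shifts of $t_i$ while some window always avoids the unique offending shift). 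Your route is longer but yields a more complete description of $(AxA)\cap R$; the paper's choice of degree $(d-1)e_i$ is precisely what makes the two-generator ideal appear directly, with no detour through $\Spec(R)$. One point worth spelling out in your version: the cross-index reordering that splits $m_p\,x\,m_{-\alpha-p}$ into $\prod_i Z_i^{(-g_i)}X_i^{a_i}Z_i^{(g_i-a_i)}$ produces units of $R$ twisted by various $\si_h$, but these stay units, and each single-index block already lies in $R$, so the windows really are products of $\si_i$-shifts of $t_i$ only and no automorphism from another index contaminates them.
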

\begin{proof}
First we show that
\[ AxA=A,\;\forall x\in\mathcal{X}\Longleftrightarrow AX_i^dA=A,\;\forall i\in\nn,\forall d\in\Z_{>0}.\]
The implication $\Rightarrow$ is trivial since $X_i^d\in\mathcal{X}$ for all $i,d$.
Conversely, assume that $AX_i^dA=A$ for all $i,d$ and let $x\in\mathcal{X}$.
Since $A$ is of Lie type $(A_1)^n$, we can reorder the factors in $x$
(up to multiplicative invertible factors from $R$) and thus get $AxA=Ax'A$ where
 $x'=X_1^{j_1}\cdots X_n^{j_n}$ for some $j_r\ge 0$.
By assumption, $AX_1^{j_1}A=A$. Thus
\begin{equation}\label{eq:A1proof_steg75}
1\in (AX_1^{j_1}A)\cap R=\sum_{g\in \Z^n} A_gX_1^{j_1}A_{-g-j_1e_1}.
\end{equation} 
Since $A$ is of Lie type $(A_1)^n$, for each $g\in \Z^n$ we have $A_g=RZ_1^{(g_1)}\cdots Z_n^{(g_n)}$.
Also $RZ_i^{(k)}Z_j^{(l)}\subseteq R Z_j^{(l)}Z_i^{(k)}$ for all $i\neq j$ and any $k,l\in\Z$.
Thus, \eqref{eq:A1proof_steg75} implies that
\[1=\sum_{k\in\Z} r_kZ_1^{(k)}X_1^{j_1} Z_1^{(-k-j_1)}\]
for some $r_k\in R$, $k\in \Z$, (only finitely many non-zero).
We can choose $s_k\in R$, $k\in \Z$, such that \[\sum_{k\in\Z} s_k Z_1^{(k)}x'Z_1^{(-k-j_1)}=
\sum_{k\in\Z} r_k Z_1^{(k)}X_1^{j_1} Z_1^{(-k-j_1)}\cdot X_2^{j_2}\cdots X_n^{j_n}
=X_2^{j_2}\cdots X_n^{j_n}.\]
This proves that $X_2^{j_2}\cdots X_n^{j_n}\in Ax'A=AxA$. Replacing $x$ by
$X_2^{j_2}\cdots X_n^{j_n}$ and repeating this procedure for $2,3,\ldots, n$ yields $1\in AxA$, hence $A=AxA$.

It remains to show that
 $AX_i^dA=A,\;\forall i\in\nn,\forall d\in\Z_{> 0}$
 is equivalent to $Rt_i+R\si_i^d(t_i)=R,\;\forall i\in\nn,\forall d\in\Z_{> 0}$.
Similar to the reasoning above we have for any $d>0$ and any $i\in\nn$,
 $AX_i^dA\cap RX_i^{d-1}=\sum_{k\in\Z}RZ_i^{(k)}X_i^dZ_i^{(-k-1)}=RY_iX_i^d+RX_i^dY_i
 =\big(Rt_i+R\si_i^d(t_i)\big)X_i^{d-1}$ which implies the required equivalence.
\end{proof}

Lemma \ref{lem:AxA_vs_J} and Theorem \ref{thm:Simplicity_of_TGWAs} immediately imply the following result.

\begin{theorem}\label{thm:Simplicity_of_TGWAs_of_type_A1^n}
Let $A=A\TGWdat$ be a regularly graded TGWA which is $R$-finitistic of Lie type $(A_1)^n$.
Then $A$ is simple if and only if the following three assertions hold:
\begin{enumerate}[{\rm (i)}]
\item $R\si_i^d(t_i)+Rt_i = R$, for all $i\in\nn$ and $d\in\Z_{>0}$;
\item $R$ is $\Z^n$-simple;
\item $Z(A)\subseteq R$.
\end{enumerate}
\end{theorem}

The next result deals with the case of higher rank generalized Weyl algebras. Taking $n=1$ in the following theorem, we recover the case of \cite[Theorem~6.1]{J93}.
\begin{theorem}\label{thm:Simplicity_of_GWAs}
If $A=R(\si,\vect{t})$ is a generalized Weyl algebra of rank $n$, then $A$ is simple if and only if the following four assertions hold:
\begin{enumerate}[{\rm (i)}]
\item $t_i \in R_{\reg}$ for all $i\in\nn$;
\item\label{it:RtRsi} $Rt_i+R\si_i^d(t_i)=R$ for $i\in \nn$ and all $d\in\Z_{>0}$;
\item $R$ is $\Z^n$-simple;
\item\label{it:GWA_injectivity} $\si:\Z^n\to\Aut_\K(R)$ is injective.
\end{enumerate}
\end{theorem}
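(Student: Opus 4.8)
The plan is to exhibit $A=R(\si,\vect{t})$ as the special kind of TGWA governed by Theorem~\ref{thm:Simplicity_of_TGWAs_of_type_A1^n} and then to match the two lists of hypotheses. First I record the structural facts. Because $\si_i(t_j)=t_j$ for $i\neq j$, relation~\eqref{eq:right_finitistic} holds with $m=1$ and $s_1'=-1$, so every generalized Weyl algebra is $\K$-finitistic, hence $R$-finitistic. A GWA also satisfies $X_iX_j=X_jX_i$, $Y_iY_j=Y_jY_i$ with all $\mu_{ij}=1$; so the moment each $t_i$ is regular it is regularly graded by Theorem~\ref{thm:regularly_graded_char} and of Lie type $(A_1)^n$, its associated Cartan matrix having $a_{ij}=1-m_{ij}=0$ for $i\neq j$. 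Thus, \emph{provided regularity is known}, Theorem~\ref{thm:Simplicity_of_TGWAs_of_type_A1^n} applies and its two conditions ``$Rt_i+R\si_i^d(t_i)=R$'' and ``$R$ is $\Z^n$-simple'' are precisely conditions (ii) and (iii) of the present statement. The whole task therefore reduces to two points: reconcile injectivity of $\si$ (iv) with the center condition $Z(A)\subseteq R$ of Theorem~\ref{thm:Simplicity_of_TGWAs_of_type_A1^n}, and show that simplicity forces the regularity (i).

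For the ``if'' direction I assume (i)--(iv). By (i) the algebra is regularly graded of Lie type $(A_1)^n$, so Theorem~\ref{thm:Simplicity_of_TGWAs_of_type_A1^n} is available; conditions (ii) and (iii) supply its first two requirements. For the third I use (iv): if $\si$ is injective then $K=\ker(\si)=\{0\}$, whence $A_K=A_0=R$, and since $R$ is $\Z^n$-simple Theorem~\ref{CommutantDescription} gives $C_A(R)=A_K=R$. Thus $R$ is maximal commutative and $Z(A)\subseteq C_A(R)=R$, and Theorem~\ref{thm:Simplicity_of_TGWAs_of_type_A1^n} yields simplicity of $A$.

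For the ``only if'' direction I assume $A$ simple. Condition (iii) is Lemma~\ref{SimpleImpliesZnSimple}. For regularity (i) I argue by contradiction: if some $t_i$ is not regular then $\mathfrak a_i=\{r\in R\mid rt_i=0\}$ is nonzero, and since $\si_j(t_i)=t_i$ for $j\neq i$ it is $\si_j$-invariant for every $j\neq i$; hence $\sum_{m\in\Z}\si_i^m(\mathfrak a_i)$ is a nonzero $\Z^n$-invariant ideal, equal to $R$ by (iii). Writing $1=\sum_m\si_i^m(r_m)$ with $r_mt_i=0$ and multiplying by the finite product $Q=\prod_m\si_i^m(t_i)$ forces $Q=0$; after translating the exponents this reads $X_i^NY_i^N=\prod_{j=1}^N\si_i^j(t_i)=0$ for some $N\ge 1$. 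Now $A_{-Ne_i}=RY_i^N$, so by~\eqref{eq:gamma_formula} the element $X_i^N$ lies in the right radical of the gradation form; since $\ga_A$ is non-degenerate (Corollary~\ref{cor:gamma_nondeg}) this gives $X_i^N=0$, whence $A=AX_i^NA=\{0\}$, contradicting simplicity. With (i) in hand, Theorem~\ref{thm:Simplicity_of_TGWAs_of_type_A1^n} delivers (ii) and $Z(A)\subseteq R$; it remains to obtain injectivity (iv). Here I localize: by \cite[Theorem~2.15]{FutHart} the set $T$ generated by the $\si_g(t_i)$ is a regular Ore set, and $T^{-1}A$ is a crossed product which, because $\mu_{ij}=1$ and the $X_i$ commute, is the skew group algebra $T^{-1}R\rtimes_\si\Z^n$ with trivial cocycle. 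Simplicity of $A$ passes to $T^{-1}A$. If $\si$ were not injective I choose $0\neq k\in K$; then $\si$ factors through $\Z^n/\langle k\rangle$ and the induced surjection $T^{-1}R\rtimes_\si\Z^n\to T^{-1}R\rtimes_{\bar\si}(\Z^n/\langle k\rangle)$ sends the nonzero element $X^k-1$ to $0$ while mapping onto a nonzero ring, producing a proper nonzero ideal of $T^{-1}A$ --- a contradiction. Hence $\si$ is injective.

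The main obstacle is this ``only if'' direction, i.e.\ extracting from bare simplicity the two features that Theorem~\ref{thm:Simplicity_of_TGWAs_of_type_A1^n} takes as running hypotheses. Regularity must be obtained \emph{before} any localization is legitimate, and the argument above leans essentially on the non-degeneracy of the gradation form (Corollary~\ref{cor:gamma_nondeg}) to convert a vanishing product of $\si_i$-translates of $t_i$ into $X_i^N=0$. Injectivity is the more structural point: the clean device is the crossed-product localization $T^{-1}R\rtimes_\si\Z^n$, in which a nontrivial kernel of $\si$ manifests as the central Laurent-polynomial subalgebra generated by $X^k$ (equivalently, as the explicit proper ideal above), so that simplicity can hold only when $\si$ is injective.
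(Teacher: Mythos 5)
Your overall route is the paper's: reduce to Theorem \ref{thm:Simplicity_of_TGWAs_of_type_A1^n} after observing that a GWA with regular $t_i$'s is a regularly graded, $R$-finitistic TGWA of Lie type $(A_1)^n$, and then translate the center condition $Z(A)\subseteq R$ into injectivity of $\si$. The ``if'' direction is correct as written. Your derivation of (iv) from simplicity is a genuinely different and valid argument: the paper notes that for $0\neq g\in\ker(\si)$ the element $Z_1^{(g_1)}\cdots Z_n^{(g_n)}$ spans a nonzero homogeneous component of $Z(A)$ of nonzero degree, so $Z(A)\subseteq R$ forces $\ker(\si)=\{0\}$; you instead pass to the localization $T^{-1}A\cong T^{-1}R\rtimes_\si\Z^n$ and exhibit the proper nonzero ideal $\ker\big(T^{-1}R\rtimes_\si\Z^n\to T^{-1}R\rtimes_{\bar\si}(\Z^n/\langle k\rangle)\big)$ containing $X^k-1$. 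Both work; the paper's is internal to $A$, yours is more structural but relies on regularity having been established first (which you correctly arrange).

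The gap is in your proof that simplicity forces each $t_i$ to be regular. You invoke Corollary \ref{cor:gamma_nondeg} to pass from $X_i^NY_i^N=0$ to $X_i^N=0$, but that corollary concerns TGWAs $A'/\grRad(A')$, whereas $A=R(\si,\vect t)$ is at this stage only the GWA given by generators and relations; when some $t_i$ is not regular these need not coincide, and the gradation form of a GWA can be degenerate --- the paper's example with $R=\C[H]/(H^2)$ following Theorem \ref{IdealIntersection} is precisely a rank-one GWA in which $X^2$ lies in the radical of $\ga_A$ yet $X^2\neq 0$. The step can be patched (simplicity forces $\grRad(A)=\{0\}$, and since $\ga_A$ is $\Z^n$-symmetric --- Lemma \ref{lem:nondegtgwaflip} descends to the quotient defining the GWA --- Proposition \ref{prop:gradation_things}(ii) then gives non-degeneracy), but even so your closing step ``$X_i^N=0$, whence $A=AX_i^NA=\{0\}$'' is not yet a contradiction: nowhere do you record why $X_i^N\neq 0$. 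What actually clashes with $X_i^N=0$ is the standard fact that the monomials $rZ_1^{(g_1)}\cdots Z_n^{(g_n)}$ form a free $R$-basis of a GWA irrespective of regularity of the $t_i$'s. The paper sidesteps all of this with a one-line argument: if $rt_i=0$ with $r\neq 0$, then $rY_i\neq 0$ and $(ArY_iA)\cap R=\{0\}$, a nonzero proper ideal. Alternatively, your own computation essentially shows $(AX_i^NA)\cap R=\{0\}$, which together with $X_i^N\neq 0$ finishes the argument without any appeal to the gradation form.
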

\begin{proof}
If $A$ is simple, then all $t_i$'s must be regular. Indeed, if $rt_i=0$ for some $i\in \nn$, where $0\neq r\in R$, then one can check that the element $rY_i$ of $A$ (which is non-zero in the present case of GWAs) generates an ideal which is proper: $(ArY_iA)\cap R=\{0\}$. Also, if all $t_i$'s are regular, it is well-known that $A$ is isomorphic to the TGWA $A\TGWdat$, where $\mu_{ij}=1$ for all $i,j$ (see \cite[Example 1.2]{MT_def}) and that it is $R$-finitistic and of Lie type $(A_1)^n$. Thus we can apply Theorem \ref{thm:Simplicity_of_TGWAs_of_type_A1^n}. It only remains to prove that $Z(A)\subseteq R$ if and only if $\si$ is injective. For this, the key point is that if $g\in K=\ker(\si)$, then $Z(A)_g=R^{\Z^n}Z_1^{(g_1)}\cdots Z_n^{(g_n)}$ (here it is crucial that $A$ is a GWA). Then it is clear that $K=\{0\}$ if and only if $Z(A)\subseteq R$.
\end{proof}

Combining Theorem \ref{thm:Simplicity_of_GWAs} and Corollary \ref{MaxCommConditions} we get the following.

\begin{corollary}\label{cor:SimplicityMaxComm_of_GWAs}
If $A=R(\si,\vect{t})$ is a simple generalized Weyl algebra of rank $n$, then $R$ is a maximal commutative subalgebra of $A$.
\end{corollary}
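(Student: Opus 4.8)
The plan is to read off the needed structural facts from the simplicity of $A$ via Theorem \ref{thm:Simplicity_of_GWAs}, and then feed them into the converse (second) half of Corollary \ref{MaxCommConditions}. First I would invoke Theorem \ref{thm:Simplicity_of_GWAs}: since $A=R(\si,\vect{t})$ is assumed simple, all four of its listed conditions hold. In particular, assertion (i) yields that each $t_i$ is regular in $R$, assertion (iii) yields that $R$ is $\Z^n$-simple with respect to $\si$, and assertion (iv) yields that $\si:\Z^n\to\Aut_\K(R)$ is injective. These last two are exactly the ingredients demanded by Corollary \ref{MaxCommConditions}.

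The regularity of the $t_i$ is what permits us to regard the GWA as a TGWA, which is necessary because Corollary \ref{MaxCommConditions} is phrased for a TGWA $A\TGWdat$. As recalled in the proof of Theorem \ref{thm:Simplicity_of_GWAs}, when all $t_i$ are regular the generalized Weyl algebra $R(\si,\vect{t})$ is isomorphic to the TGWA $A\TGWdat$ with $\mu_{ij}=1$ for all $i,j$ (cf. \cite[Example 1.2]{MT_def}), and this isomorphism carries the distinguished subalgebra $R$ to the degree zero component $R$. Under this identification the two hypotheses we have secured, namely $\Z^n$-simplicity of $R$ and injectivity of $\si$, are manifestly preserved, so nothing is lost in transferring to the TGWA setting.

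Finally I would apply the converse direction of Corollary \ref{MaxCommConditions} to this TGWA incarnation of $A$: since $R$ is $\Z^n$-simple, the injectivity of $\si$ forces $R$ to be maximal commutative in $A$, which is precisely the assertion of the corollary. The whole argument is a direct composition of the two cited results, so I do not expect any genuine difficulty; the only step warranting a moment's care is the passage from the GWA to its TGWA form, which rests entirely on the regularity of the $t_i$ guaranteed by simplicity.
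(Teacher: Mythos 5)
Your proof is correct and follows exactly the route the paper intends: the paper derives this corollary by "combining Theorem \ref{thm:Simplicity_of_GWAs} and Corollary \ref{MaxCommConditions}", i.e.\ simplicity gives regularity of the $t_i$ (hence the TGWA realization), $\Z^n$-simplicity of $R$, and injectivity of $\si$, and the converse half of Corollary \ref{MaxCommConditions} then yields maximal commutativity. Your extra care about transferring from the GWA to its TGWA incarnation is a reasonable elaboration of a step the paper leaves implicit.
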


\begin{remark}
If $A=R(\si,\vect{t})$ is a GWA of rank $n$ where $t_i$ is non-invertible for each $i\in\nn$, then condition \eqref{it:GWA_injectivity} of Theorem \ref{thm:Simplicity_of_GWAs} can be removed. 
Indeed, then $\si:\Z^n\to\Aut_\K(R)$ must be injective if
$Rt_i+R\si_i^d(t_i)=R$ for all $i\in\nn$ and all $d\in\Z_{>0}$, which follows from
the fact that, for GWAs, $\si_i(t_j)=t_j$ for $i\neq j$.
\end{remark}

\section{Examples}\label{Sect:Examples}
The following example shows a simple TGWA where $R$ is not maximal commutative.
\begin{exmp}\label{ex:muSimple}
Let $A=A\TGWdat$ be as in Example \ref{ex:mu}, i.e.
 $n=2$, $R=\K=\C$, $\si_1=\si_2=\identity_\C$, $t_1=t_2=1$ and $\mu_{12}=2$, $\mu_{21}=\frac{1}{2}$. Note that in $A$ we have $Y_iX_i=1=X_iY_i$ for $i=1,2$.
We will show that $Z(A)\subseteq R$. Let $a\in Z(A)$ be non-zero. Since $Z(A)$ is a graded subalgebra of $A$ we can assume that $a$ is homogeneous. Let $g=(g_1,g_2)\in\Z^2$ be the degree of $a$. Since $X_2X_1=2X_1X_2$ (since $Y_2=X_2^{-1}$) we have $a=rX_1^{g_1}X_2^{g_2}$ for some non-zero $r\in R$. Therefore $0=[a,X_1]=(2^{g_2}-1)rX_1^{g_1+1}X_2^{g_2}$ which implies $g_2=0$. Similarly $g_1=0$, which proves that $Z(A)\subseteq R$. Trivially $R=\C$ is $\Z^n$-simple and $AxA=A$ for all $x\in\mathcal{X}$ (since $X_1$ and $X_2$ are invertible), hence Theorem  \ref{thm:Simplicity_of_TGWAs} implies that $A$ is simple.
\end{exmp}

Condition \eqref{it:mainthm_Jcond} in Theorem \ref{thm:Simplicity_of_TGWAs} and condition \eqref{it:RtRsi} in Theorem \ref{thm:Simplicity_of_GWAs}
are not superfluous, as the following example shows.

\begin{exmp}\label{ex:nonsimpleGWA}
Let $n=1$, $\K=\C$, $R=\C[u]$, $\si_1(u)=u+1$, $t=u(u-1)$ and $A=R(\si,\vect{t})$. 
Then $R$ is a maximal commutative subalgebra of $A$, and $R$ is $\Z$-simple.
However, $Rt+R\si_1(t)=Ru$ which is a proper ideal of $R$. Hence, by
Theorem \ref{thm:Simplicity_of_GWAs}, $A$ is not simple.
\end{exmp}

The construction of the following TGWA is due to A. Sergeev \cite[Section~1.5.2]{Sergeev}, but we consider $\mathfrak{sl}(n+1,\C)$ instead of $\mathfrak{gl}(n+1,\C)$.
\begin{exmp}
Let $\mathfrak{h}$ be the Cartan subalgebra consisting of all traceless diagonal matrices in the Lie algebra $\mathfrak{sl}(n+1,\C)$. 
For $i \in \{1,\ldots,n+1\}$, let $\ep_i\in\mathfrak{h}^\ast$ be given by $\ep_i(\diag(\la_1,\ldots,\la_{n+1}))=\la_i$. For $i\in \nn$ let $\al_i:=\ep_i-\ep_{i+1}$ be the simple roots, and let $h_i\in\mathfrak{h}$ be such that $\al_i(h_j)=\delta_{ij}$. Let $f_1,\ldots,f_{n+1}\in\C[u]$ be an arbitrary collection of polynomials in one indeterminate $u$.

Take $\K=\C$ and let $R=S_\C(\mathfrak{h})$, the symmetric algebra on $\mathfrak{h}$. For $i\in \nn$, define $\si_i\in\Aut_\C(R)$ by requiring that $\si_i(h)=h-\al_i(h),\;\forall h\in\mathfrak{h}$. Let $\mu_{ij}=1$ for all $i,j$ and put
 \[t_1=f_1(h_1)f_2(h_2-h_1),\,\, t_2=f_2(h_2-h_1+1)f_3(h_3-h_2), \,
    \ldots \, , \, t_n=f_n(h_n-h_{n-1}+1)f_{n+1}(h_n).\]
One can verify that the consistency relations \eqref{eq:tgwa_consistency} hold. Let $\mathcal{S}(f_1,\ldots,f_{n+1})=A\TGWdat$ be the corresponding TGWA.
One can show that it is $R$-finitistic.
 We will use Theorem \ref{thm:Simplicity_of_TGWAs} to prove the following result.
\begin{prop}
$\mathcal{S}(1,u,1)$ is simple.
\end{prop}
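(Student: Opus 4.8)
The plan is to verify the three conditions of Theorem \ref{thm:Simplicity_of_TGWAs}. First I would unwind the data: here $n=2$ (so $\mathfrak{h}\subseteq\mathfrak{sl}(3,\C)$), and with $f_1=f_3=1$, $f_2(u)=u$ one gets $t_1=h_2-h_1$ and $t_2=h_2-h_1+1$. Writing $w:=h_2-h_1$, so that $R=\C[h_1,h_2]$ with $t_1=w$ and $t_2=w+1$, the definitions give $\si_1(w)=w+1$ and $\si_2(w)=w-1$, while $\si_g(h_1)=h_1-g_1$ and $\si_g(h_2)=h_2-g_2$; hence $\si:\Z^2\to\Aut_\C(R)$ is injective and $K=\ker\si=\{0\}$. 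Since $t_1,t_2$ are nonzero in the domain $R$ they are regular, so $A$ is regularly graded by Theorem \ref{thm:regularly_graded_char}; together with $R$-finitisticity (which holds since $t_j,\si_i(t_j),\si_i^2(t_j)$ form a three-term arithmetic progression in $w$, giving the monic relation $\si_i^2(t_j)-2\si_i(t_j)+t_j=0$) this places us in the setting of Theorem \ref{thm:Simplicity_of_TGWAs}. I note that $A$ is \emph{not} of Lie type $(A_1)^2$ (one checks $m_{12}=2$, so the associated Cartan matrix is $A_2$), so Lemma \ref{lem:AxA_vs_J} and Theorem \ref{thm:Simplicity_of_TGWAs_of_type_A1^n} are unavailable and condition (i) must be established directly.

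Conditions (ii) and (iii) are the easy ones. For (ii), the $\Z^2$-simplicity of $R=\C[h_1,h_2]$ under the translation action: given a nonzero invariant ideal containing a nonzero $p$, the element $p-\si_1(p)$ has strictly smaller degree in $h_1$ and is still nonzero whenever $p$ depends on $h_1$; iterating in $h_1$ and then in $h_2$ via $\si_2$ produces a nonzero constant, whence the ideal is all of $R$. For (iii), injectivity of $\si$ gives $K=\{0\}$, so by Theorem \ref{CommutantDescription} (here $R$ is a domain) we have $C_A(R)=A_K=A_0=R$; since $Z(A)\subseteq C_A(R)$, we conclude $Z(A)\subseteq R$.

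The main obstacle is condition (i): $AxA=A$ for every $x\in\mathcal{X}$. My plan is to show that $I_x:=AxA\cap R$ is the unit ideal. It is an ideal of $R$, and it contains the nonzero element $s_0:=x^\ast x$, where $x^\ast$ is the reversed product of the corresponding $Y_i$; this $s_0$ is a product of $\si$-translates of the $t_i$, each of the form $w+(\text{integer})$, hence a nonzero product of linear factors $w+c$ with $c\in\Z$. The key point is that $I_x$ is stable under the two contraction operations $s\mapsto X_1sY_1=\si_1(s)(w+1)$ and $s\mapsto Y_1sX_1=\si_1^{-1}(s)\,w$, since for $s\in I_x$ both $X_1sY_1$ and $Y_1sX_1$ again lie in $AxA\cap R$. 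Writing $D\subseteq\Z$ for the root set of a product $\prod_j(w+c_j)$, the first operation sends $D$ to $(D-1)\cup\{-1\}$ and the second sends $D$ to $(D+1)\cup\{0\}$. Iterating the first $k$ times yields an element with root set $(D-k)\cup\{-1,\dots,-k\}$, and the second $k$ times yields $(D+k)\cup\{0,\dots,k-1\}$; these all lie in $I_x\cap\C[w]$. Since $\C[w]$ is a PID, it suffices to see that these iterates have no common root, and then $1\in I_x$ and $AxA=A$.

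The delicate part, which I expect to be the crux, is this last combinatorial step: a hypothetical common root $v_0$ would, for large $k$, be forced by the $\si_1^{k}$-iterates into $\{-1,\dots,-k\}$ (hence $v_0\le -1$) and by the $\si_1^{-k}$-iterates into $\{0,\dots,k-1\}$ (hence $v_0\ge 0$), a contradiction; one must also apply the telescoping identities $X_1sY_1=\si_1(s)\si_1(t_1)$ and $Y_1sX_1=\si_1^{-1}(s)t_1$ with $t_1=w$, $\si_1(t_1)=w+1$ correctly. Granting condition (i), Theorem \ref{thm:Simplicity_of_TGWAs} yields that $\mathcal{S}(1,u,1)$ is simple.
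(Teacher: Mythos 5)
Your verification of conditions (ii) and (iii) of Theorem \ref{thm:Simplicity_of_TGWAs} matches the paper's: the same degree-reduction argument for the $\Z^2$-simplicity of $R$, and the same deduction $Z(A)\subseteq C_A(R)=R$ from the injectivity of $\si$ via Theorem \ref{CommutantDescription}. For condition (i), however, you take a genuinely different route. The paper observes that $[X_1,Y_1]=\si_1(t_1)-t_1=1$ and $[Y_2,X_2]=t_2-\si_2(t_2)=1$, while $[Y_j,X_i]=0$ for $i\neq j$, so that $\ad(-Y_1)$ and $\ad(Y_2)$ act as commuting ``partial derivatives'' on monomials in $X_1,X_2$; applying them $g_1$ and $g_2$ times to an $x\in\mathcal{X}$ of degree $(g_1,g_2)$ produces $g_1!\,g_2!\cdot 1$, whence $1\in AxA$ in one line. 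Your argument instead works inside the ideal $I_x=AxA\cap R$: starting from the nonzero element $x^\ast x$, a product of integer translates of $w=h_2-h_1$, you iterate the contractions $s\mapsto X_1sY_1=\si_1(s)(w+1)$ and $s\mapsto Y_1sX_1=\si_1^{-1}(s)w$ and track root sets to show that the resulting elements of $I_x\cap\C[w]$ have no common zero, forcing $1\in I_x$. Both are correct, and the combinatorial step you flag as delicate does close: for $k$ large a common root would have to lie in $\{-1,\dots,-k\}$ and in $\{0,\dots,k-1\}$ simultaneously. The paper's trick is shorter but hinges on the special coincidence that the $t_i$ here satisfy Weyl-algebra relations; your Jordan-style argument is more robust, being essentially a hands-on verification that the $\si_1$-translates of $t_1$ generate the unit ideal, adapted to the $A_2$ situation where Lemma \ref{lem:AxA_vs_J} is indeed unavailable --- a point you correctly identify.
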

\begin{proof}
Regard $R$ as a polynomial algebra over $\C$ in variables $h_1,h_2$.
We show that $R$ is $\Z^2$-simple. Let $J$ be any non-zero $\Z^2$-invariant
ideal of $R$. Among all non-zero elements $f$ of $J$, choose one whose
$h_1$-degree is minimal. If it is positive, then $f-\si_1(f)$ has 
smaller degree, hence $f-\si_1(f)=0$ (otherwise
$f-\si_1(f)$ would be a non-zero element of $J$ since $J$ is $\Z^2$-invariant,
which would contradict the choice of $f$). If the $h_1$-degree of $f$ is zero
then we get $\si_1(f)=f$ anyway. Next, among all non-zero elements $f$ of $J$
with $\si_1(f)=f$, choose one with minimal $h_2$-degree. As above,
$\si_2(f)=f$. That is, $f\in R^{\Z^2}$, the subalgebra
of invariants. However, $R^{\Z^2}=\C$. This
proves that $J$ contains a non-zero constant, so $J=R$. Hence $R$ is $\Z^2$-simple.

Now, note that we have $Y_1X_1=t_1=h_2-h_1$ and $X_1Y_1=\si_1(t_1)=h_2-h_1+1$,
hence $[-Y_1,X_1]=1$. Similarly we get $[Y_2,X_2]=1$. Since $\mu_{ij}=1$
for all $i,j$, we have $[Y_j,X_i]=0$ if $i\neq j$.
Thus, if $x\in\mathcal{X}$ has degree $g=(g_1,g_2)\in\Z_{>0} \times \Z_{>0}$ then
we have
\begin{equation}
1=\frac{1}{g_1!g_2!}(\ad -Y_1)^{g_1}\big((\ad Y_2)^{g_2}(x)\big)
\end{equation}
where $(\ad a)(b)=[a,b]=ab-ba$.
Thus $AxA=A$ for each $x\in\mathcal{X}$, where $A=\mathcal{S}(1,u,1)$.

For any $g=(g_1,g_2)\in \Z^2$ we have $\si_g(h_i)=h_i-g_i$ for $i=1,2$.
Thus it is clear that $\si:\Z^2\to\Aut_\K(R)$ is injective.
By Theorem \ref{CommutantDescription}, $C_A(R)=R$. Since we
always have $Z(A)\subseteq C_A(R)$, we get $Z(A)\subseteq R$. Hence, by Theorem \ref{thm:Simplicity_of_TGWAs}, $A$ is simple.
\end{proof}

\end{exmp}

\begin{exmp}
The construction of the following TGWA was first mentioned
in \cite[Example~1.3]{MT_def} and has been generalized to
higher rank cases in \cite{H09,Sergeev}.
Let 
 $n=2$, $R=\K[H]$, $\si_1(H)=H+1$, $\si_2(H)=H-1$,
 $t_1=H$, $t_2=H+1$, $\mu_{12}=\mu_{21}=1$ and
let $A=A\TGWdat$ be the associated TGWA.
One may verify that the consistency relations \eqref{eq:tgwa_consistency}
hold and that $A$ is $\K$-finitistic of type $A_2$.
It was shown in \cite[Example~6.3]{H09} that $A$ is isomorphic to the $\K$-algebra
with generators $X_1,X_2,Y_1,Y_2,H$ and defining relations
\begin{gather*}
\begin{alignedat}{2}
X_1H&=(H+1)X_1, &\qquad  X_2H&=(H-1)X_2, \\
Y_1H&=(H-1)Y_1, &\qquad Y_2H&=(H+1)Y_2, \\
Y_1X_1&=X_2Y_2=H, &\qquad   Y_2X_2&=X_1Y_1=H+1,\\
X_1Y_2&=Y_2X_1,&\qquad X_2Y_1&=Y_1X_2,
\end{alignedat}\qquad
\begin{aligned}
X_1^2X_2-2X_1X_2X_1+X_2X_1^2&=0,\\
X_2^2X_1-2X_2X_1X_2+X_1X_2^2&=0,\\
Y_1^2Y_2-2Y_1Y_2Y_1+Y_2Y_1^2&=0,\\
Y_2^2Y_1-2Y_2Y_1Y_2+Y_1Y_2^2&=0.
\end{aligned}
\end{gather*}
Using these relations it is easy to check that $0\neq X_1X_2-X_2X_1\in Z(A)$.
Thus, by Theorem \ref{thm:Simplicity_of_TGWAs}, $A$ is not simple.
\end{exmp}

\section{Comparison between TGWAs and strongly graded rings}

In \cite[Theorem 3.5]{Oinert} it was proved that if $S$ is a skew group ring with commutative neutral component $S_e$,
then $S_e$ is maximal commutative if and only if $S_e\cap I\neq \{0\}$ for all non-zero ideals
 $I\subseteq S$ (i.e. $S_e$ is an essential subalgebra of $S$).
For a TGWA $A=A\TGWdat$, maximal commutativity of $R(=A_0)$ in $A$ implies that
$R$ is an essential subalgebra of $A$, by Theorem \ref{IdealIntersection}. However, the converse is not true because
of Example \ref{ex:muSimple} which is a simple TGWA (hence $R$ trivially is essential)
but $R$ is not maximal commutative in $A$.

Furthermore, in \cite[Theorem 6.13]{Oinert} it was shown that if $S$ is a $G$-graded skew group ring with commutative neutral component $S_e$, then $S$ is simple if and only if $S_e$ is $G$-simple and maximal commutative in $S$. For a TGWA $A=A\TGWdat$, simplicity of $A$ implies that $R$ is $\Z^n$-simple but does not imply that $R$ is maximal commutative in $A$ (Example \ref{ex:muSimple} again). The converse need not hold either, by Example \ref{ex:nonsimpleGWA}.

Another result, \cite[Theorem 6.6]{Oinert}, states that if $S$ is a strongly $G$-graded ring such that the neutral component $S_e$ is maximal commutative in $S$, then $S$ is simple if and only if $S_e$ is $G$-simple. For a TGWA $A=A\TGWdat$, if $R$ is $\Z^n$-simple and maximal commutative in $A$ then still $A$ need not be simple, see Example \ref{ex:nonsimpleGWA}. However, by Lemma \ref{SimpleImpliesZnSimple}, if $A\TGWdat$ is a simple TGWA (even without $R$ being maximal commutative) then $R$ is $\Z^n$-simple.

Finally, \cite[Proposition 6.5]{Oinert} shows that if $S$ is a strongly $G$-graded ring with commutative $S_e$ such that $C_S(S_e)$ is $G$-simple, then $S$ is simple. For TGWAs, the condition that $C_A(R)$ is $\Z^n$-simple could be interpreted as saying that $C_A(R)$ has no non-zero proper ideals $J$ such that $X_iJ=JX_i$ and $Y_iJ=JY_i$ for all $i$. In any case this need not hold for TGWAs because of Example \ref{ex:nonsimpleGWA}.

\bibliographystyle{amsalpha}

\end{document}